\newdimen\rh@wd
\newdimen\rh@hta
\newdimen\rh@htb
\newbox\rh@box
\def\rh@measure#1{\setbox\rh@box=\hbox{$#1$}\rh@wd=\wd\rh@box \rh@hta=\ht\rh@box}
\def\widecheck#1{\rh@measure{#1}%
  \setbox\rh@box=\hbox{$\widehat{\vrule height \rh@hta width\z@ \kern\rh@wd}$}%
  \rh@htb=\ht\rh@box \advance\rh@htb\rh@hta \advance\rh@htb\p@
  \ooalign{$\vrule height \ht\rh@box width\z@ #1$\cr
           \raise\rh@htb\hbox{\scalebox{1}[-1]{\box\rh@box}}\cr}}
\numberwithin{equation}{section}  
\numberwithin{equation}{section}
        \newtheorem{theorem}{Theorem}[section]
        \newtheorem{proposition}[theorem]{Proposition}
        \newtheorem{lemma}[theorem]{Lemma}
\let\oldmarginpar\marginpar
\renewcommand\marginpar[1]{\-\oldmarginpar[\raggedleft\footnotesize #1]
{\raggedright\footnotesize #1}}
\newcommand \bei {\begin{itemize}}
\newcommand \eei {\end{itemize}} 
\newcommand \Hcal {\mathcal H}
\newcommand \be {\begin{equation}}
\newcommand \bel {\be\label}
\newcommand \ee {\end{equation}}
\newcommand \la \langle
\newcommand \ra \rangle 
\newcommand	\Kcal 	{\mathcal K}
\newcommand	\RR 		{\mathbb R}
\newcommand \del {{\partial}}
\newcommand \underdel {\underline{\partial}}
\newcommand \eps \epsilon
\newcommand{\myfootnote}[1]{
    \renewcommand{\thefootnote}{}
    \footnotetext{\hspace{-16.5pt}\scriptsize#1}
    \renewcommand{\thefootnote}{\arabic{footnote}}
}
\begin{document}

\title{The top-order energy of quasilinear wave equations
\\
 in two space dimensions is uniformly bounded}
 
\author{
Shijie Dong${}^{\,\text{a}}$,
Philippe G. LeFloch${}^{\,\text{b}}$,
and Zhen Lei${}^{\,\text{a}, \text{c}, \ast}$  
}

\date{\today}

\maketitle

\begin{abstract} Alinhac solved a long-standing open problem in 2001 and established that quasilinear wave equations in two space dimensions with quadratic null nonlinearities admit global-in-time solutions, provided that the initial data are compactly supported and sufficiently small in Sobolev norm. In this work, Alinhac obtained an upper bound with polynomial growth in time for the top-order energy of the solutions. A natural question then arises whether the time-growth is a true phenomena, despite the possible conservation of basic energy. 
Analogous problems are also of central importance for Schr\"odinger equations and the incompressible Euler equations in two space dimensions, as studied by Bourgain, Colliander-Keel-Staffilani-Takaoka-Tao, Kiselev-Sverak, and others. 
In the present paper, we establish that the top-order energy of the solutions in Alinhac theorem remains globally bounded in time, which is opposite to Alinhac's blowup-at-infinity conjecture.  
\end{abstract}

\maketitle 

\tableofcontents  
 
%=====================================================================

\myfootnote{
${}^\ast$Corresponding author.\\
${}^\text{a}$ Fudan University, School of Mathematical Sciences, 220 Handan Road, Shanghai, 200433, China. \\
${}^\text{b}$ Laboratoire Jacques-Louis Lions and Centre National de la Recherche Scientifique,
Sorbonne Universit\'e, 4 Place Jussieu, 75252 Paris, France. Email: contact@philippelefloch.org.  
\\
${}^\text{c}$ Shanghai Center for Mathematical Sciences, Shanghai 200433, China. 
Email: shijiedong1991@hotmail.com, zlei@fudan.edu.cn. 
\\
{\sl AMS :} 35L05 
}

%=====================================================================

\section{Introduction}

\paragraph{Background.}

We consider quasilinear wave equations in two space dimensions with quadratic null nonlinearities.
In 2001 Alinhac established a global-in-time existence theory \cite{Alinhac1} when the initial data are
 compactly supported and sufficiently small in Sobolev norm, which was a long-standing open problem then.
In this work, Alinhac established an upper bound with polynomial growth (in time) for the top-order energy of the solutions.
A natural question is that whether the time-growth of the top-order energy is a true phenomena or not. In fact, Alinhac conjectured \cite{Alinhac02, Alinhac03, Alinhac-book} that this should be a true phenomena and called it ``blowup-at-infinity'', in the context of three dimensional, quasilinear wave equations enjoying a weak null structure.  
Interestingly, Alinhac conjecture was recently established by Deng and Pusateri \cite{Deng2020} for a class of three-dimensional nonlinear wave equations satisfying the weak null condition, and the authors did prove 
the high-order energy of the solutions cannot be uniformly bounded in time. 
On the other hand, the same question for two-dimensional quasilinear wave equations with null nonlinearities remained 
 open until now. Yet, there has been substantial progress concerning this question in recent years. 
 
In \cite{Cai}, Cai, Lei, and Masmoudi showed the uniform boundedness of the top-order energy for a class 
of two-dimensional quasilinear wave equations with (hidden) strong null structure in the sense of Lei \cite{Lei16}. 
More recently, Cai \cite{Cai2020} considered the system of two-dimensional incompressible elastodynamics, and again proved 
the uniform boundedness of the top-order energy. 
There are also some related results for three-dimensional wave equations.
For instance, Wang \cite{Wang2014} proved that quasilinear wave equations with quadratic null nonlinearities in $\RR^{1+3}$ admit compactly supported small global-in-time solutions with uniformly bounded energy.
Lei and Wang in \cite{Lei-Wang} showed the uniform boundedness of the top-order energy for the three dimensional incompressible isotropic elastodynamics. Introducing the ``hyperboloidal foliation method'', LeFloch and Ma \cite{PLF-YM-book} treated nonlinear systems coupling wave equations and Klein-Gordon equations and prove a uniform energy bound, 
 while LeFloch and Wei \cite{PLF-CW} treated the problem of evolving relativistic membranes.  

%------------------------------------------------------

\paragraph{Other models.}

In addition to wave equations, the study of the time-growth behavior of the (high-order) energy is also of central importance in the theory of the Schr\"odinger equations, two-dimensional incompressible Euler equations, as well as Hamiltonian systems. The time-growth of the top-order energy reflects the {\sl cascade of energy} to its high frequency part. 
In \cite{Bourgain96} (and \cite{Bourgain99}, respectively), Bourgain constructed a wave equation (Schr\"odinger equation, resp.) whose solutions have unbounded high-order energy, and later in \cite{Bourgain00} raised a conjecture concerning
 the time-growth of high-order Sobolev norms for Schr\"odinger equations. 
 Later on, Colliander, Keel, Staffilani, Takaoka, and Tao \cite{Tao2010} proved that the $H^s$--energy (with $s>1$) of 
 the cubic defocusing nonlinear Schr\"odinger equation posed on the two dimensional torus 
 has the property of generating high-frequency modes within a certain bounded time interval. This result
 can be regarded as a partial answer to Bourgain conjecture in \cite{Bourgain00}.
Recently, in the study of the two-dimensional incompressible Euler equations posed on a disk, Kislev and Sverak \cite{Sverak} constructed initial data such that the vorticity gradient grows as a double exponential in time.
 (See also Lei and Shi \cite{Lei-Shi} for a result on the torus and an exponential growth.)
  Various other interesting results related to this topic can be found in the aforementioned references as well as the references cited therein.

%------------------------------------------------------

\paragraph{Main result.}

In the present paper, we establish that the top-order energy of the solutions to \eqref{eq:model-2d} 
remains {\sl globally bounded in time.} This is achieved by applying the hyperboloidal foliation method 
and taking advantage of the null structure in order to uncover extra decay in the hyperbolic time $s = \sqrt{t^2-|x|^2}$. We emphasize that such a phenomena substantially differs from that of Alinhac's blowup-at-infinity conjecture \cite{Alinhac02, Alinhac03, Alinhac-book, Deng2020}. 

We are interested in the quasilinear wave equation
\bel{eq:model-2d}
\aligned
- \Box w + P^{\gamma \alpha \beta} \del_\gamma w \del_\alpha \del_\beta w = 0,
\endaligned
\ee
when compactly supported initial data are prescribed on a constant time slice $t = t_0 = 2$: 
\bel{eq:ID-2d}
(w, \del_t w)(t_0, \cdot) = (w_{0}, w_{1}),
\ee
the data $w_0, w_1$ having Sobolev regularity (see below).
The wave operator is defined as $\Box = m^{\alpha \beta} \del_\alpha \del_\beta = -\del_t \del_t + \del_a \del^a$ with $m = \text{diag}(-1, 1, 1)$, while for simplicity in the presentation $P^{\gamma \alpha \beta}$ are constants satisfying the standard null condition, i.e.
$P^{\gamma \alpha \beta} \xi_\gamma \xi_\alpha \xi_\beta = 0$ for all 
$ \xi_0^2 = \xi_1^2 + \xi^2_2$ and, in addition, the symmetry condition
$P^{\gamma \alpha \beta} = P^{\gamma \beta \alpha}$.
While we focus here on a model problem, our method should be applicable well beyond \eqref{eq:model-2d}. 

Latin letters $a, b, \ldots \in \{ 1, 2\}$ are used for spatial indices and Greek letters $\alpha, \beta, \ldots \in \{0, 1, 2\}$ represent spacetime indices, while Einstein summation convention is adopted unless otherwise specified.
Without loss of generality, the initial data $(w_{0}, w_{1})$ are assumed to be supported in a ball, say 
$\{(t, x) : t  = 2, |x| \leq 1 \}$, so that the solution is supported within the region $\{ (t, x) : t \geq |x| + 1, t \geq 2 \}$.

Our main result is as follows, and provides us with a uniform control of the energy associated with any vector field in the following list, referred to as admissible vector fields, 
\be
\Gamma \in \{\del_\alpha, L_a = x_a \del_t + t \del_a, \Omega_{ab} = x_a \del_b - x_b \del_a, L_0 = t\del_t + x^a \del_a\}. 
\ee

\begin{theorem}\label{thm:main}
Let $N \geq 4$ be an integer. 
Consider the wave equation \eqref{eq:model-2d} together with initial data $(w_{0}, w_{1})$ on the time slice $t = 2$ supported in the ball $\{x : |x| \leq 1\}$. 
For any $\delta>0$ there exists $\epsilon_0 > 0$ such that, for all $\eps \leq \eps_0$ and all initial data satisfying
\be 
\| w_{0} \|_{H^{N+1}(\RR^2)} + \| w_{1} \|_{H^N(\RR^2)} 
\leq \eps,
\ee
the Cauchy problem \eqref{eq:model-2d}--\eqref{eq:ID-2d} admits a global-in-time solution $w$, which
 decays according to 
\bel{eq:solution-decay}
|w(t, x)| \lesssim t^{-1/2+\delta/2},
\qquad
|\del w(t, x)| \lesssim t^{-1/2}.
\ee
In addition, the total energy of this solution is \textbf{uniformly bounded}, i.e.~for any admissible field  $\Gamma$ 
\bel{eq:top-bound} 
\| \del \Gamma^I w \|_{L^2(\RR^2)} \lesssim 1,
\qquad
|I| \leq N. 
\ee
\end{theorem}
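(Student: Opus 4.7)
The plan is to establish Theorem \ref{thm:main} via a continuity (bootstrap) argument carried out on the hyperboloidal foliation $\Hcal_s := \{(t,x) : t^2 - |x|^2 = s^2, \, t \geq s \geq s_0\}$ in the spirit of LeFloch-Ma. On each $\Hcal_s$ the natural energy is expressed in the semi-hyperbolic frame, where partial derivatives decompose into a time derivative weighted by $s/t$ together with the ``good'' tangential derivatives $\underdel_a = (x_a/t)\del_t + \del_a$. Let $\Escr_N(s)$ denote the sum over $|I| \leq N$ of the hyperboloidal energies of $\Gamma^I w$. The bootstrap assumptions, made on a maximal interval $[s_0, s_*]$, postulate that $\Escr_N(s)^{1/2} \leq C_1 \eps$ and that the pointwise decay \eqref{eq:solution-decay} holds with a constant $C_2$; the objective is to improve both constants by a factor of $1/2$, which forces $s_* = +\infty$.

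From the bootstrap hypothesis, the global Sobolev inequality on hyperboloids converts the $L^2$-control for $\Gamma^J w$ with $|J|\leq N$ into pointwise decay: $|\del \Gamma^J w(t,x)| \lesssim \eps (s/t) t^{-1/2}$ for $|J|\leq N-2$, with an extra $(s/t)$-gain on the good derivatives $\underdel_a \Gamma^J w$. The small loss $t^{\delta/2}$ appearing in the pointwise bound for $w$ itself is precisely what absorbs the logarithmic factors that arise at the borderline in two space dimensions.

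The heart of the argument is the propagation of the top-order energy. Commuting $\Gamma^I$ through \eqref{eq:model-2d}, I use that the admissible fields commute with $\Box$ up to admissible lower-order operators, so the main task is the estimation of $\Gamma^I(P^{\gamma \alpha \beta} \del_\gamma w\, \del_\alpha \del_\beta w)$. By Leibniz I split the result into three regimes: (a) $|J|=0, |K|=|I|$; (b) $|J|=|I|, |K|=0$; and (c) genuinely intermediate $0<|J|,|K|<|I|$. In case (a) the coefficient $P^{\gamma\alpha\beta}\del_\gamma w$ is small, and by introducing an Alinhac-type ghost-weight $e^{-q(t-|x|)}$ in the energy one absorbs this term into the left-hand side while simultaneously extracting a nonnegative spacetime integral of a ``bad'' derivative; the symmetry $P^{\gamma\alpha\beta}=P^{\gamma\beta\alpha}$ is crucial for coercivity of the modified energy. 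In cases (b) and (c) the null condition rewrites each product as a linear combination of terms in which at least one good derivative $\underdel_a$, or a weight $s/t$, appears; together with the pointwise decay above this yields integrable bounds in $s$.

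The principal obstacle lies in case (c) at the borderline where the derivative budget is split nearly evenly and neither factor enjoys a full $t^{-1}$ pointwise gain; in two dimensions this is critical since the free-wave rate $t^{-1/2}$ is not integrable. Here one must combine the $(s/t)$-gains extracted from the null condition with the spacetime integral of good derivatives furnished by the ghost-weight energy, and with an $L^2$-Hardy inequality on $\Hcal_s$, to obtain $\tfrac{d}{ds}\Escr_N(s) \lesssim \eps^3 \, s^{-1-\sigma}$ for some $\sigma>0$. Integration in $s$ then closes the top-order bootstrap; the lower-order energies close by an analogous but simpler analysis, and global existence together with \eqref{eq:solution-decay} and \eqref{eq:top-bound} follows by standard continuation and by reinterpreting the hyperboloidal bounds on constant-$t$ slices.
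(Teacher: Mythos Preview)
Your proposal misses the paper's central new ingredient: the \emph{conformal-type energy} on hyperboloids (Section~\ref{sec:conformal}).  You bootstrap only the natural hyperboloidal energy $\Escr_N(s)$, but the natural energy only controls $\|\underdel_a \Gamma^I w\|_{L^2_f(\Hcal_s)}$, not $\|s\,\underdel_a \Gamma^I w\|_{L^2_f(\Hcal_s)}$.  This missing factor of $s$ is exactly what is needed to close the top-order estimate.  Concretely, after invoking the null structure you must bound terms of the type
\[
\big\|\underdel_a \Gamma^{I_1} w\,\del\del\Gamma^{I_2}w\big\|_{L^2_f(\Hcal_\tau)},\qquad |I_1|\text{ large},\ |I_2|\text{ small}.
\]
Placing the low-order factor in $L^\infty$ gives at best $\|\tau^{-1}\del\del\Gamma^{I_2}w\|_{L^\infty}\lesssim \eps\,\tau^{-2}$, so you are forced to pair it with $\|\tau\,\underdel_a\Gamma^{I_1}w\|_{L^2_f}$.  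From the natural energy alone this last norm is only $\lesssim \eps\,\tau$, yielding a non-integrable $\tau^{-1}$; the paper instead bootstraps the conformal energy $E_{con}(\Gamma^I w,s)\lesssim (C_1\eps)^2 s^{O(\delta)}$ (and a quasilinear version thereof at top order), which gives $\|\tau\,\underdel_a\Gamma^{I_1}w\|_{L^2_f}\lesssim \eps\,\tau^{O(\delta)}$ and hence an integrable $\tau^{-2+O(\delta)}$.  The same conformal bound, via Lemma~\ref{lem:L2type} and Sobolev, is what produces the refined pointwise decay $|\underdel_a \Gamma^J w|\lesssim \eps\,t^{-1}s^{-1+\delta}$ used throughout.

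Your proposed substitute---a ghost-weight energy on hyperboloids together with a Hardy inequality---does not supply this extra factor of $s$ on the good derivatives.  The ghost-weight mechanism yields a weighted \emph{spacetime} $L^2$ bound on $\underdel_a\Gamma^I w$, not the improved \emph{fixed-$s$} bound $\|s\,\underdel_a\Gamma^I w\|_{L^2_f(\Hcal_s)}$ that the null-form estimate requires; and the paper's introduction already explains that the ghost-weight route (on flat slices) leaves a residual $(\log t)^{1/2}$ growth that prevents closure.  Your case-(a) treatment also omits the quasilinear structure: at top order the term $P^{\gamma\alpha\beta}\del_\gamma w\,\del_\alpha\del_\beta\Gamma^I w$ is not estimated as a source but absorbed into a modified principal part, and the resulting commutator terms $P^{\gamma\alpha\beta}\del_\gamma\del_\alpha w\,\del_\beta\Gamma^I w\,\del_t\Gamma^I w$ and $P^{\gamma\alpha\beta}\del_t\del_\gamma w\,\del_\alpha\Gamma^I w\,\del_\beta\Gamma^I w$ again need the conformal energy to be integrable (see Proposition~\ref{prop:improved2}).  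Without adding $E_{con}$ to your bootstrap, the scheme does not close.
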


The global existence part in this theorem has been established by Alinhac using the ghost weight energy method in \cite{Alinhac1}. Here the improvement concerns the uniform boundedness of the top-order energy as stated in \eqref{eq:top-bound} (together with the decay rate for the solution itself in \eqref{eq:solution-decay}).
Our strategy is first to show the energy for the restriction of the solution on a hyperboloidal foliation is uniformly bounded, and then to deduce that the energy on constant time $t$ slices is also uniformly bounded. 
The analysis is based on the Hyperboloidal Foliation Method developed in LeFloch and Ma \cite{PLF-YM-book} for coupled systems of wave and Klein-Gordon equations. As far as either (uncoupled) wave equations or Klein-Gordon equations are concerned this strategy was investigated first 
in pioneering work by Klainerman \cite{Klainerman85} and H\"ormander \cite{Hormander}.

%--------------------------------------------------

\paragraph{Brief history on related topics.}

In seminal work by Klainerman \cite{Klainerman86} and Christodoulou \cite{Christodoulou}, the wave equation with null nonlinearities and sufficiently small initial data was shown to admit global-in-time solutions in $\RR^{1+3}$. However due to the slow decay of linear waves in $\RR^{1+2}$, nonlinear wave equations in $\RR^{1+2}$ are somewhat more difficult to handle. In the framework of Klainerman’s vector field method, Alinhac found a class of ``good derivatives'' and proved a new kind of energy estimate, which is called the ``ghost weight'' energy estimate. Based on this idea, Alinhac \cite{Alinhac1}
succeeded to prove that quasilinear wave equations with null nonlinearities admit small global-in-time solutions in $\RR^{1+2}$. 
In \cite{Alinhac1}, the top-order energy to the solutions grows polynomially in time. 
Whether or not the time-growth on this two-dimensional problem is a true phenomena as stated 
in Alinhac's blowup-at-infinity conjecture \cite{Alinhac02, Alinhac03, Alinhac-book, Deng2020} remains an open question until now.

Following Alinhac's pioneering work \cite{Alinhac1} on two-dimensional quasilinear null wave equations, several interesting advances were also made in recent years. In \cite{Yin} concerning the equation \eqref{eq:model-2d}, the authors removed the compactness assumption on the initial data and, to this end, relied on a class of weighted $L^\infty$--$L^\infty$ estimates. 
A similar result for two-dimensional, fully nonlinear, wave equations satisfying the null condition was obtained in \cite{Cai}, and this result was achieved by relying on an inherent strong null structure \cite{Lei16} enjoyed by nonlinearities. Another interesting work is \cite{He2020}, in which a detailed description of the scattering properties of solutions was derived.

%-------------------------

Next we turn to the description of various progress related to Alinhac conjecture on obtaining time-growth properties or
deriving uniform bounds for the top-order energy for the wave-type equations. 
On one hand, in \cite{Deng2020}, the authors studied the large-time behavior of the solutions to a class of three-dimensional nonlinear wave equations satisfying the weak null condition, and proved that the decay of solutions at high-order
 is strictly slower than $t^{-1}$; this means that the high-order energy of the solutions cannot be uniformly bounded, and this establishes Alinhac conjecture concerning the blowup-at-infinity for this problem.
On the other hand, there exists many progress on establishing uniform bounds for the top-order energy of solutions for various classes of wave-type equations.
In the recent work \cite{Cai} on a class of two-dimensional quasilinear wave equations with hidden strong null structure, as well as in \cite{Cai2020} on two-dimensional incompressible elastodynamics, the authors obtained a uniform bound for
 the top-order energy. In their proof, the main ingredients include the use of the ghost weight method \cite{Alinhac1} and
  the inherent strong null structure  \cite{Lei16}.
In the work \cite{Wang2014} and \cite{Lei-Wang}, the authors derived a uniform bound for
 the top-order energy for three-dimensional quasilinear null wave equations and 
 for the system of three-dimensional incompressible isotropic elastodynamics, respectively; their 
 proof was based on a novel use of the null condition at the top-order energy level. 
Furthermore, as mentioned earlier when introducing the hyperboloid foliation method, similar results 
were discovered for three-dimensional coupled wave-Klein-Gordon equations \cite{PLF-YM-book} and for evolving relativistic membranes \cite{PLF-CW}.

%----------------------------------------- 

\paragraph{Main challenge.}

Let us describe here the main challenge in deriving a uniform bound for the top-order energy of the solutions to \eqref{eq:model-2d}, which are mainly due to the nature of the (slow!) decay of two-dimensional waves. Recall that the ghost energy estimate for the equation $-\Box u = f$
reads 
\be
\int_{\RR^2} |\del u(t)|^2 \, dx
+
\sum_a \int_{t_0}^t \int_{\RR^2}  {|G_a u|^2 \over \langle t-|x| \rangle^{1+\delta}}  \, dxdt
\lesssim
\int_{\RR^2} |\del u(t_0)|^2 \, dx
+
\int_{t_0}^t \int_{\RR^2} \big| f \del_t u \big| \, dxdt,
\ee
in which $\delta > 0$ and $G_a = (x_a / |x|) \del_t + \del_a$ are the so-called ``good derivatives'' associated with
 the ghost weight energy estimate.
Returning to our model \eqref{eq:model-2d}, we see that, when estimating the top-order energy of the solution $w$, we need to control the time-integral 
\be
\int_{t_0}^t \| P^{\gamma \alpha \beta} \del_t \del_\gamma w \del_\alpha \Gamma^I w \del_\beta \Gamma^I w \|_{L^1} \, dt,
\ee
with $|I| = N$ and $\Gamma \in \{ \del_\alpha, L_a, \Omega_{ab}, L_0 \}$. 
(See \eqref{eq:quasi-EE} below for further details.) 
Then we find 
$$
\aligned
& 
\int_{t_0}^t \| P^{\gamma \alpha \beta} \del_t \del_\gamma w \del_\alpha \Gamma^I w \del_\beta \Gamma^I w \|_{L^1} \, dt
\\
&\lesssim
\sum_a \int_{t_0}^t \| \del \del w G_a \Gamma^I w \del \Gamma^I w \|_{L^1} \, dt + \textsl{similar}
\\
&\lesssim
\sum_a \int_{t_0}^t \| \del \del w G_a \Gamma^I w \|_{L^2} \|\del \Gamma^I w  \|_{L^2} \, dt + \textsl{similar}
\\
&\lesssim
\Big( \int_{t_0}^t \| \del \del w \langle t-|x| \rangle^{1/2+\delta/2} \|^2_{L^\infty} \, dt \Big)^{1/2} \Big( \int_{t_0}^t \Big\| {G_a \Gamma^I w \over \langle t-|x| \rangle^{1/2+\delta/2}}  \Big\|^2_{L^2} \, dt \Big)^{1/2} \sup_{\tau \in [t_0, t]} \|\del \Gamma^I w (\tau) \|_{L^2} + \textsl{similar},
\endaligned
$$
where ``similar'' stands for integral terms that will be similarly controled.
It is easy to see that {\sl even if we assume} that $w$ enjoys the properties enjoyed by linear waves, i.e.
\be
\| \del \del w \langle t-|x| \rangle^{1/2+\delta/2} \|^2_{L^\infty} 
\lesssim \langle t \rangle^{-1},
\qquad
\int_{t_0}^t \Big\| {G_a \Gamma^I w \over \langle t-|x| \rangle^{1/2+\delta/2}}  \Big\|^2_{L^2} \, dt + \|\del \Gamma^I w  \|_{L^2}^2
\lesssim 1,
\ee
then 
we are still left with a {\sl bad growth} in time associated with  $(\log \langle t\rangle )^{1/2}$ and the latter
 blows up at infinity.

%-------------------------

\paragraph{Our strategy of the proof.}

We now present our new key technique which helps to overcome the aforementioned challenge.
In our approach we use hyperboloids $\Hcal_s = \{ (t, x) : t^2 = s^2 + |x|^2 \}$ in order to foliate the spacetime, and we 
estimate the energy of the wave equation along these hyperboloids, as in~\cite{Klainerman85, Hormander, PLF-YM-book}; 
see Lemma \ref{lem:EE} below. 
We find that in the energy estimate we integrate with respect to the hyperbolic time $s = \sqrt{t^2 - |x|^2}$ instead of $t$, and this allows us to take advantage of the {\sl $t-|x|$ decay},  
which is precisely provided by the ghost weight energy estimate \cite{Alinhac1}. 
A second key ingredient for our proof is the conformal--type energy estimate and its quasilinear version 
 \cite{Wong, YM-HH, YM0, PLF-JO},  leading to almost sharp bounds on the norms 
 $\| \cdot \|_{L^2_f(\Hcal_s)}$ (defined in \eqref{eq:def-norm}, below) of the following terms ($|I| \leq N$):  
$$
(s/t) \Gamma^I w,
\qquad
(s/t)  L_a \Gamma^I w,
\qquad
(s/t) \Omega_{ab} \Gamma^I w,
\qquad
(s/t) L_0 \Gamma^I w. 
$$ 
In addition, we also rely on a version of the estimate for null forms, which is well-adapted to the hyperboloidal foliation setting. This reads as follows~\cite{PLF-YM-book}: 
\be
\aligned
|P^{\gamma \alpha\beta} \del_\gamma w \del_\alpha \del_\beta w|
&\lesssim
(s/t)^2 | \del_t w \del_t\del_t w |
+
\sum_a \big|{L_a w \over t} \del \del w \big|
+
\sum_a \big|\del w  {L_a \del w \over t} \big|
+
t^{-1} | \del w \del w|. 
\endaligned
\ee
We refer to Lemma \ref{lem:null-classical} for further details. Based on this null form estimate, we roughly expect
\be
\big\| P^{\gamma \alpha\beta} \del_\gamma w \del_\alpha \del_\beta w \big\|_{L^2_f(\Hcal_s)}
\lesssim s^{-2},
\ee
which, now, is an integrable function. For a better understanding that null condition can compensate the slow decay of two-dimensional waves, we provide here a comparison with
three-dimensional wave equations with general quadratic nonlinearity. 
Namely, if $u$ denotes a (local) solution to 
$$
-\Box u = \del_t u \del_t \del_t u, \qquad \text{   in  } \RR^{1+3},
$$
then we find that the best decay we can get is
$$
\big\| \del_t u \del_t \del_t u \big\|_{L^2_f(\Hcal_s)}
\lesssim \big\| (t/s) \del_t u \big\|_{L^\infty(\Hcal_s)} \big\| (s/t) \del_t \del_t u \big\|_{L^2_f(\Hcal_s)}
\lesssim s^{-1},
$$
which non-integrable. Clearly, the null condition forces the nonlinearities to decay fast, even if in lower dimension. 
 
%--------------------------------------------------

\paragraph{Outline of this paper.}

In Section \ref{sec:pre}, we provide some basic properties os wave equations and we introduce the hyperboloidal foliation framework. Next, we derive a quasilinear version of conformal-type energy estimates on hyperboloids; see Section \ref{sec:conformal}. Finally, Section \ref{sec:proof} is devoted to the proof of Theorem \ref{thm:main}.

%============================================================

\section{Fundamental energy estimate}
\label{sec:pre}

\subsection{Notation}

We work in the spacetime $\RR^{2+1}$ with metric $m = \text{diag}(-1, 1, 1)$, and the indices are raised or lowered by the metric $m$. As usual, we use 
$$
\del_\alpha = \del_{x^\alpha}
$$
to denote partial derivatives, with $x^0 = t$ (i.e. $x_0 = -t$). The rotating vector fields are denoted by
$$
\Omega_{ab} = x_a \del_b - x_b \del_a, 
\qquad
a, b \in \{1, 2\}.
$$
We represent the Lorentz boosts by
$$
L_a = x_a \del_t + t \del_a,
\qquad
a \in \{1, 2\},
$$
and the scaling vector field is denoted by
$$
L_0 = x^\alpha \del_\alpha.
$$
We will use $\Gamma$ to denote a vector field in the set $V := \{ \del_0, \del_1, \del_2, L_1, L_2, \Omega_{12}, L_0 \}$.

On the other hand, we use $s \geq s_0 = 2$ to denote the hyperbolic time, and the hyperboloids are denoted by
$$
\Hcal_s = \{ (t, x) : t^2 = |x|^2 + s^2\}.
$$
In order to fit well with the hyperboloidal foliation of the interior of the cone 
$$
\Kcal = \{ (t, x) : t \geq |x| + 1 \},
$$
we first introduce (see \cite{PLF-YM-book}) the hyperboloidal frame
\be 
\overline{\del}_0 = \del_s = (s/t) \del_t,
\qquad
\overline{\del}_a = {L_a \over t} = {x_a \over t} \del_t + \del_a, \quad a = 1, 2.
\ee
The original partial derivatives can be expressed by the hyperboloidal frame, which reads
\be  
\del_\alpha = \Psi^\beta_\alpha \overline{\del}_\beta,
\ee
with the transition matrix 
\be
(\Psi_\alpha^{ \beta})=
\begin{pmatrix}
{t/s} & 0 &   0    \\
-{x_1 / s} & 1  & 0    \\
-{x_2 / s} &  0  &  1   
\end{pmatrix}.
\ee
Next, we introduce (see \cite{PLF-YM-book}) the so-called semi-hyperboloidal frame 
\be 
\underdel_0 = \del_t,
\qquad
\underdel_a = \overline{\del}_a, \quad a = 1, 2.
\ee
The usual partial derivatives can also be expressed by the semi-hyperboloidal frame
$$
\del_t = \underdel_0,
\qquad
\del_a = - {x_a \over t} \underdel_0 + \underdel_a,
$$
and this will be used when estimating the null forms.

Worth to mention, for all points $(t, x) \in \Hcal_s \cap \Kcal$ with $s \geq 2$ we have
$$
|x| \leq t,
\qquad
s \leq t \leq s^2,
\qquad
t \leq t+|x| \leq 2t.
$$

Given a sufficiently nice function $u$ with support $\Kcal$, its weighted norms on hyperboloids $\Hcal_s$ ($s \geq 2$) are defined by
\bel{eq:def-norm}  
\|u\|_{L^p_f(\Hcal_s)}^p
=
\int_{\Hcal_s^*} \big| u (t, x) \big|^p \, dx
:=
\int_{\RR^2} \big| \widetilde{u} (s, x) \big|^p \, dx,
\qquad
\widetilde{u} (s, x) := u \big(\sqrt{s^2 + |x|^2}, x\big),
\ee
for all $1 \leq p < +\infty$, in which $\Hcal_s^*$ represents the projection of $\Hcal_s$ onto the slice $\{(t, x) : t=2 \}$, and the $L^\infty$ norm $\| \cdot \|_{L^\infty(\Hcal_s)}$ is defined in the natural way.

\subsection{Energy estimate}

Given a sufficiently nice function $u$ on $\Hcal_s$, we define its (natural) energy and conformal energy respectivly by (following \cite{PLF-YM-book, YM-HH, YM0})
\be 
\aligned
E(u, s) 
&:= \int_{\Hcal_s^*} \Big( \big(\del_t u \big)^2+ \sum_a \big(\del_a u \big)^2+ 2 (x^a/t) \del_t u \del_a u  \Big) \, dx
\\
&= \int_{\Hcal_s^*} \Big( \big( (s/t)\del_t u \big)^2+ \sum_a \big(\underdel_a u \big)^2 \Big) \, dx
\\
&= \int_{\Hcal_s^*} \Big( \big( \underdel_\perp u \big)^2+ \sum_a \big( (s/t)\del_a u \big)^2+ \sum_{a<b} \big( t^{-1}\Omega_{ab} u \big)^2 \Big) \, dx,
\\
E_{con} (u, s)
&:= \int_{\Hcal_s^*} \Big( \sum_a \big( s \underdel_a u \big)^2 + \big( K u + u \big)^2 \Big) \, dx,
\endaligned
\ee
with $a, b \in\{1, 2\}$, the orthogonal vector field $\underdel_\perp:= L_0/t = \del_t+ (x^a / t)\del_a$ and $K u := \big( s \del_s + 2 x^a \underdel_a \big) u$. Easily, we see that
$$
\int_{\Hcal_s^*} \sum_\alpha \big( (s/t)\del_\alpha u \big)^2 + (\Gamma u/t)^2 \, dx
\lesssim E(u, s),
$$
with $\Gamma \in \{ \del_\alpha, L_a, L_0, \Omega_{ab}\}$.

\begin{lemma}[Energy estimates on hyperboloids]\label{lem:EE}
Consider the wave equation
$$
-\Box u = h, 
\qquad
(u, \del_t u)(s_0, \cdot) = (u_0, u_1).
$$
We have the following three kinds of energy estimates on hyperboloids.
\bei
\item The standard energy estimates (see \cite{PLF-YM-book}):
\bel{eq:E-E1}
E(u, s)^{1/2}
\leq
E(u, s_0)^{1/2}
+
\int_{s_0}^s \| h \|_{L^2_f(\Hcal_{\tau})} \, d\tau.
\ee

\item The conformal energy estimates (see \cite{Wong, YM-HH, YM0}):
\bel{eq:E-E2}
E_{con} (u, s)^{1/2}
\leq
E_{con} (u, s_0)^{1/2}
+
\int_{s_0}^s \tau \| h \|_{L^2_f(\Hcal_{\tau})} \, d\tau.
\ee

\item The $L^2$ norm estimates (see \cite{YM0}):
\bel{eq:E-E3} 
\| (s/t) u \|_{L^2_f(\Hcal_s)}
\leq 
\| u \|_{L^2_f(\Hcal_{s_0})}
+
\int_{s_0}^s {E_{con}(u, \tau)^{1/2} \over \tau} \, d\tau.
\ee

\eei
\end{lemma}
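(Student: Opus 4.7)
All three estimates follow from multiplier arguments on the spacetime slab bounded by $\Hcal_{s_0}$, $\Hcal_s$, and the cone $\{t=|x|+1\}$. The general plan is to multiply $-\Box u=h$ by a suitable quantity, integrate the resulting divergence identity, and invoke Stokes' theorem: the boundary contribution on $\Hcal_\tau$ will reproduce the relevant energy, the null-cone contribution is sign-definite and will be discarded, and the source integral is estimated by Cauchy--Schwarz in the hyperboloidal norm $L^2_f$.

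For \eqref{eq:E-E1}, the multiplier is $\partial_t u$, yielding the identity
$\partial_\alpha\!\bigl(\partial^\alpha u\,\partial_t u - \tfrac{1}{2}\delta^\alpha_0\,\partial^\gamma u\,\partial_\gamma u\bigr) = -h\,\partial_t u$. After pulling back the future-directed Minkowski normal to $\Hcal_\tau$ onto the slice $\Hcal_\tau^*$, the flux through $\Hcal_\tau$ is exactly $\tfrac12 E(u,\tau)$, and Cauchy--Schwarz on the source gives $\tfrac{d}{ds}E(u,s)^{1/2}\leq \|h\|_{L^2_f(\Hcal_s)}$. For \eqref{eq:E-E2}, the multiplier is $Ku+u$, corresponding to the Minkowski conformal Killing field $K^{\text{std}}=(t^2+|x|^2)\partial_t+2t\,x^a\partial_a$ through $Ku = K^{\text{std}}u/t$. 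The classical conformal identity expresses $-(Ku+u)\Box u$ as a divergence whose flux through $\Hcal_\tau$ equals $\tfrac12 E_{con}(u,\tau)$; Cauchy--Schwarz on the source, combined with the extra factor of $s$ that appears because $K^{\text{std}}$ scales linearly in hyperbolic time, gives $\tfrac{d}{ds}E_{con}(u,s)^{1/2}\leq s\,\|h\|_{L^2_f(\Hcal_s)}$. Integration in $s$ produces \eqref{eq:E-E1} and \eqref{eq:E-E2}.

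For \eqref{eq:E-E3}, the key ingredient is the identity
\[
(Ku+u)\big|_{\Hcal_s} \;=\; \partial_s\!\bigl(s\,\widetilde{u}\bigr) + 2\,x^a\,\partial_a\widetilde{u},
\qquad
\widetilde{u}(s,x):=u\!\left(\sqrt{s^2+|x|^2},\,x\right),
\]
where $\partial_s=(s/t)\partial_t$ is the hyperbolic-time derivative at fixed spatial $x$. The plan is to differentiate $\|\rho\,\widetilde{u}\|_{L^2(\RR^2)}^2$ with $\rho:=s/t$ in $s$, substitute for $s\partial_s\widetilde{u}$ using the identity above, integrate by parts in $x$ on the $2x^a\partial_a\widetilde{u}$ contribution (using $\partial_a(\rho^2 x^a)=2\rho^4$ in two spatial dimensions), and carefully track the contribution of $\partial_s\rho=|x|^2/t^3$. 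Following Ma~\cite{YM0}, the cancellations align and Cauchy--Schwarz together with $\|Ku+u\|_{L^2_f(\Hcal_s)}\leq E_{con}(u,s)^{1/2}$ yield the clean differential inequality $\tfrac{d}{ds}\|(s/t)u\|_{L^2_f(\Hcal_s)}\leq E_{con}(u,s)^{1/2}/s$, which integrates to \eqref{eq:E-E3} upon noting $(s_0/t)\leq 1$ on $\Hcal_{s_0}$. The hardest step is this last one: the integration-by-parts and sign-bookkeeping must align precisely, since any imprecision leaves a residual term of order $\|\rho\widetilde{u}\|_{L^2}/s$ that would only yield a Gronwall-type multiplicative bound rather than the stated additive form.
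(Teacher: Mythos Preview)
Your proposal is correct and broadly aligned with the paper's approach. The paper only revisits the proof of \eqref{eq:E-E2}, citing references for \eqref{eq:E-E1} and \eqref{eq:E-E3}, so on those two items you in fact supply more than the paper does. For \eqref{eq:E-E2} the paper carries out concretely what you describe abstractly: rather than invoking the Minkowski conformal Killing field $K^{\text{std}}$ and a ``classical conformal identity,'' it rewrites $-\Box u$ directly in hyperboloidal coordinates as
\[
-\Box u \;=\; s^{-1}\,\partial_s\bigl(s\,\partial_s u + 2x^a\overline{\partial}_a u + u\bigr) \;-\; \overline{\partial}_a\overline{\partial}^a u \;=\; s^{-1}\,\partial_s(Ku+u) - \overline{\partial}_a\overline{\partial}^a u,
\]
multiplies by $s(Ku+u)$, and integrates by parts in the tangential $\overline{\partial}_a$-directions to produce the $\partial_s$ of $\tfrac12(Ku+u)^2 + \tfrac12 s^2\,\overline{\partial}_a u\,\overline{\partial}^a u$ plus pure spatial divergences. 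This is exactly your multiplier argument, executed in the $(s,x)$ frame where the flux through $\Hcal_s$ is manifest; your identification $Ku = K^{\text{std}}u/t$ is correct and explains \emph{why} the multiplier works, while the paper simply verifies it by direct calculation. The factor of $s$ you flag is precisely the one the paper builds into the multiplier $s(Ku+u)$.
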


\begin{proof}
We revisit the proof for the conformal energy estimates only.

First recall
$$
\del_t = {t\over s} \del_s,
\qquad
\del_a = -{x_a\over s} \del_s + \overline{\del}_a,
$$
and the wave operator $-\Box$ can thus be expressed in terms of $\overline{\del}_\alpha$, which reads
$$
\aligned
-\Box u
&= \del_t \del_t u - \del_a \del^a u
\\
&={t\over s} \del_s \big( {t\over s} \del_s u \big) - \big(-{x_a\over s} \del_s  + \overline{\del}_a \big)  \big(-{x^a\over s} \del_s u + \overline{\del}^a u \big)
\\
&= {t^2\over s^2} \del_s \del_s u + {t\over s} {1\over s} {s\over t} \del_s u - {t\over s} t {1\over s^2} \del_s u
- {x_a x^a \over s^2} \del_s \del_s u + {x_a x^a \over s^2} {1\over s^2} \del_s u 
\\
&+ {x_a \over s} \del_s \overline{\del}^a u
+ {x_a \over s} \overline{\del}^a \del_s u + {2\over s} \del_s u - \overline{\del}_a \overline{\del}^a u
\\
&= \del_s \del_s u + 2 {x^a \over s} \overline{\del}_a \del_s u + {2\over s} \del_s u -  \overline{\del}_a \overline{\del}^a u
\\
&= s^{-1} \del_s \big( s \del_s u + 2 x^a \overline{\del}_a u + u \big) -  \overline{\del}_a \overline{\del}^a u.
\endaligned
$$

In succession, we have
$$
\aligned
&s \big( s\del_s u + 2x^a \overline{\del}_a u + u \big) \big( -\Box u \big)
=
&{1\over 2} \del_s \big( s\del_s u + 2x^a \overline{\del}_a u + u \big)^2
- s \big( s\del_s u + 2x^a \overline{\del}_a u + u \big) \overline{\del}_b \overline{\del}^b u.
\endaligned
$$
By some simple calculations
$$
\aligned
s^2 \del_s u \overline{\del}_b \overline{\del}^b u
&=
s^2 \overline{\del}_b \big( \del_s u \overline{\del}^b u \big) - {1\over 2} s^2 \del_s \big( \overline{\del}_b u \overline{\del}^b u  \big)
\\
&=
s^2 \overline{\del}_b \big( \del_s u \overline{\del}^b u \big) 
-  {1\over 2} \del_s \big( s^2 \overline{\del}_b u \overline{\del}^b u  \big) + s \overline{\del}_b u \overline{\del}^b u,
\endaligned
$$
$$
\aligned
2 s x^a \overline{\del}_a u \overline{\del}_b \overline{\del}^b u
&= 2s x^a \overline{\del}_b \big( \overline{\del}_a u \overline{\del}^b u \big)
- s x^a \overline{\del}_a \big( \overline{\del}_b u \overline{\del}^b u \big)
\\
&=
2 s \overline{\del}_b \big( x^a  \overline{\del}_a u \overline{\del}^b u \big)
- 2 s \overline{\del}_a u \overline{\del}^a u
- s  \overline{\del}_a \big( x^a \overline{\del}_b u \overline{\del}^b u \big)
+ 2 s \overline{\del}_a u \overline{\del}^a u
\\
&=
2 s \overline{\del}_b \big( x^a  \overline{\del}_a u \overline{\del}^b u \big)
- s  \overline{\del}_a \big( x^a \overline{\del}_b u \overline{\del}^b u \big)
\endaligned
$$
and
$$
\aligned
s u \overline{\del}_b \overline{\del}^b u
=
s \overline{\del}_b \big( u \overline{\del}^b u \big) - s \overline{\del}_b u \overline{\del}^b u,
\endaligned
$$
we arrive at
$$
\aligned
& s \big( s\del_s u + 2x^a \overline{\del}_a u + u \big) \big( -\Box u \big)
= s \big( K u + u  \big) h
\\
=
& {1\over 2} \del_s \big( s\del_s u + 2x^a \overline{\del}_a u + u \big)^2
+
{1\over 2} \del_s \big( s^2 \overline{\del}_b u \overline{\del}^b u  \big) 
-
s^2 \overline{\del}_b \big( \del_s u \overline{\del}^b u \big) 
-
2 s \overline{\del}_b \big( x^a  \overline{\del}_a u \overline{\del}^b u \big)
\\
+& s  \overline{\del}_a \big( x^a \overline{\del}_b u \overline{\del}^b u \big)
- s \overline{\del}_b \big( u \overline{\del}^b u \big).
\endaligned
$$
Integrating the above identity yields the desired energy estimates.

\end{proof}

The following lemma from \cite{Dong1912} helps bound $L^2$--type norm for $L_0 u$.

\begin{lemma}\label{lem:L2type}
Let $u$ be supported in $\Kcal$, then it holds that
\bel{eq:E-E4} 
\aligned
&\big\| (s / t) L_0 u \big\|_{L^2_f(\Hcal_s)}
+
\sum_a \big\| (s / t) L_a u \big\|_{L^2_f(\Hcal_s)}
+
\big\| (s / t) \Omega_{12} u \big\|_{L^2_f(\Hcal_s)}
\\
\lesssim
&\big\| (s / t) u \big\|_{L^2_f(\Hcal_s)}
+
E_{con} (u, s)^{1/2}.
\endaligned
\ee

\end{lemma}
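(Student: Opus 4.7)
The plan is to decompose each of $L_0 u$, $L_a u$, $\Omega_{12} u$ in the hyperboloidal frame $\{\overline{\del}_0 = \del_s, \overline{\del}_a = L_a/t\}$ and then read off the bound from the definition of $E_{con}(u,s)$, which controls $\|s\overline{\del}_a u\|_{L^2_f(\Hcal_s)}$ and $\|Ku+u\|_{L^2_f(\Hcal_s)}$.

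For the Lorentz boosts the identity is immediate: since $L_a = t\,\overline{\del}_a$, we have $(s/t) L_a u = s\,\overline{\del}_a u$, whose $L^2_f$ norm is controlled directly by $E_{con}(u,s)^{1/2}$. For the rotation, substituting $\del_a = -(x_a/t)\del_t + \overline{\del}_a$ into $\Omega_{12} = x_1\del_2 - x_2\del_1$, the $\del_t$ terms cancel and one obtains $\Omega_{12} u = x_1\overline{\del}_2 u - x_2\overline{\del}_1 u$; since $|x_a|/t \leq 1$ inside $\Kcal$, this again reduces to $s\,\overline{\del}_a$ terms and is bounded by $E_{con}(u,s)^{1/2}$.

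The only step requiring a little care is the scaling vector field $L_0$. Starting from $L_0 = t\del_t + x^a\del_a$ and using $\del_a = -(x_a/t)\del_t + \overline{\del}_a$, I would combine the $\del_t$ terms to get $L_0 = \frac{t^2-|x|^2}{t}\del_t + x^a\overline{\del}_a = \frac{s^2}{t}\del_t + x^a\overline{\del}_a = s\del_s + x^a\overline{\del}_a$, which yields the key identity
\[
L_0 u = (Ku + u) - u - x^a\,\overline{\del}_a u,
\]
where $Ku = s\del_s u + 2x^a\overline{\del}_a u$ is exactly the combination appearing in $E_{con}$. Multiplying by $s/t$ and taking $L^2_f$ norms, the first term is bounded by $\|Ku+u\|_{L^2_f(\Hcal_s)} \leq E_{con}(u,s)^{1/2}$ (using $s/t \leq 1$), the second by $\|(s/t) u\|_{L^2_f(\Hcal_s)}$, and the third, since $|x^a|/t \leq 1$, by $\|s\,\overline{\del}_a u\|_{L^2_f(\Hcal_s)} \leq E_{con}(u,s)^{1/2}$.

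There is no real obstacle here; the result is essentially a clean bookkeeping lemma. The only thing worth emphasizing is the algebraic identity $L_0 = s\del_s + x^a\overline{\del}_a$, which makes $L_0$ differ from $Ku$ by a single $x^a\overline{\del}_a u$ term whose weighted $L^2_f$ norm is controlled by $E_{con}$, thereby producing the clean bound on $\|(s/t)L_0 u\|_{L^2_f(\Hcal_s)}$.
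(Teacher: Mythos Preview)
Your proposal is correct and follows essentially the same approach as the paper: both use $L_a = t\,\overline{\del}_a$ and $\Omega_{12} = (x_1 L_2 - x_2 L_1)/t$ for the boosts and rotation, and for the scaling field both derive the identity $L_0 = s\del_s + x^a\overline{\del}_a$ and then add and subtract $u$ so that $L_0 u = (Ku+u) - u - x^a\overline{\del}_a u$, bounding each piece exactly as you do. The arguments are identical up to notation.
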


\begin{proof}

We revisit the proof for \eqref{eq:E-E4} in \cite{Dong1912}.

First, the definition of the conformal energy easily deduces that
$$
\sum_a \big\| (s / t) L_a u \big\|_{L^2_f(\Hcal_s)}
\leq
E_{con} (u, s)^{1/2}.
$$
Then the relation $\Omega_{12} = t^{-1} \big(x_1 L_2 - x_2 L_1\big)$ implies
$$
\big\| (s/t) \Omega_{12} u \big\|_{L^2_f(\Hcal_s)}
\lesssim
\sum_a \big\| (s / t) L_a u \big\|_{L^2_f(\Hcal_s)}
\leq
E_{con} (u, s)^{1/2}.
$$

Next, we consider the estimate for the scaling vector field $L_0$, and we write the scaling vector field in the hyperboloidal frame
$$
L_0
=
t \del_t + x^a \del_a
=
s \del_s + x^a \underdel_a.
$$
Note that
$$
K u 
= \big( s \del_s + 2 x^a \underdel_a \big) u
= L_0 u + x^a \underdel_a u. 
$$
By the triangle inequality, we find
$$
\aligned
\big\| (s/t) L_0 u \big\|_{L^2_f(\Hcal_s)}
&\leq
\big\| (s/t) (K +1) u \big\|_{L^2_f(\Hcal_s)}
+
\big\| (s/t)  x^a \underdel_a u \big\|_{L^2_f(\Hcal_s)}
+
\big\| (s/t) u \big\|_{L^2_f(\Hcal_s)}
\\
&=
\big\| (s/t) (K+1) u \big\|_{L^2_f(\Hcal_s)}
+
\big\| (s/t)  (x^a/t) L_a u \big\|_{L^2_f(\Hcal_s)}
+
\big\| (s/t) u \big\|_{L^2_f(\Hcal_s)}.
\endaligned
$$
Finally we recall that within the cone $\Kcal$ it holds that $s<t, |x^a| < t$, and we thus obtain
$$
\big\| (s/t) L_0 u \big\|_{L^2_f(\Hcal_s)}
\lesssim
\big\| (s/t) u \big\|_{L^2_f(\Hcal_s)} + E_{con}(u, s)^{1/2}.
$$
The proof is now complete.
\end{proof}

%=======================================================================

\section{Conformal energy for quasilinear wave operators}\label{sec:conformal}

For the quasilinear wave equation (later on, we will take $u = \Gamma^I w$ with $|I| = N$)
$$
g^{\alpha \beta} \del_\alpha \del_\beta u = h,
\qquad
g^{\alpha \beta} = -m^{\alpha \beta} + P^{\gamma \alpha\beta} \del_\gamma w,
$$
we want to show the following conformal type energy estimates in analogues to \eqref{eq:E-E2}, which have been proved in \cite{YM-HH, YM0}.

\begin{proposition}\label{prop:quasi-conformal}
Consider the quasilinear wave equation
\be 
g^{\alpha \beta} \del_\alpha \del_\beta u = h,
\qquad
g^{\alpha \beta} = -m^{\alpha \beta} + P^{\gamma \alpha\beta} \del_\gamma w,
\ee
then we have
\bel{eq:quasi-conformal}
\widetilde{E}_{con} (u, s)
=
\widetilde{E}_{con} (u, s_0)
+
\int_{s_0}^s \int_{\Hcal_{\tau}^*} \tau Z h - \tau Z M_1 - M_4 \, dx d\tau.
\ee
In the above, we used the notations
\be 
\aligned
&
\widetilde{E}_{con} (u, s)
=
\int_{\Hcal_{s}^*} {1\over 2} Z^2 + M_2  \, dx,
\\
&
Z 
= 
g^{\alpha\beta} \Psi^0_\alpha \Psi^0_\beta s \del_s u 
+ 2 g^{\alpha\beta} \Psi^0_\alpha \Psi^a_\beta s \overline{\del}_a u 
+ g^{\alpha\beta} \Psi^\gamma_\alpha (\overline{\del}_\gamma \Psi^0_\beta) s u
- g^{\alpha\beta} \Psi^ 0_\alpha \Psi^0_\beta u,
\\
&M_1
=
 {1\over s} \del_s \big( g^{\alpha\beta} \Psi^ 0_\alpha \Psi^0_\beta \big)  u   %- {1\over s} g^{\alpha\beta} \Psi^ 0_\alpha \Psi^0_\beta \del_s u
- \del_s (g^{\alpha\beta} \Psi^0_\alpha \Psi^0_\beta) \del_s u 
-2 {1\over s} g^{\alpha\beta} \Psi^0_\alpha \Psi^a_\beta \overline{\del}_a u
\\
&\quad- 2 \del_s (g^{\alpha\beta} \Psi^0_\alpha \Psi^a_\beta ) \overline{\del}_a u
-{1\over s} g^{\alpha\beta} \Psi^\gamma_\alpha (\overline{\del}_\gamma \Psi^0_\beta) u
- \del_s \big( g^{\alpha\beta} \Psi^\gamma_\alpha  (\overline{\del}_\gamma \Psi^0_\beta) \big) u,
\\
&
M_2 
= - {1\over 2}  s^2 g^{\alpha\beta} \Psi^0_\alpha \Psi^0_\beta  g^{a b} \overline{\del}_a u \overline{\del}_b u,
\\
&M_4
=
{1\over 2} \del_s \big( s^2 g^{\alpha\beta} \Psi^0_\alpha \Psi^0_\beta  g^{a b} \big) \overline{\del}_a u \overline{\del}_b u
- s^2 \overline{\del}_a \big( g^{\alpha\beta} \Psi^0_\alpha \Psi^0_\beta  g^{a b}  \big)  \del_s u \overline{\del}_b u
\\
&\quad
- 2 \overline{\del}_{a'} \big( s^2 g^{\alpha\beta} \Psi^0_\alpha \Psi^a_\beta g^{a' b'} \big) \overline{\del}_a u \overline{\del}_{b'} u
+ \overline{\del}_a \big( s^2 g^{\alpha\beta} \Psi^0_\alpha \Psi^a_\beta g^{a' b'} \big) \overline{\del}_{a'} u \overline{\del}_{b'} u,
\\
&\quad
- \overline{\del}_a \big( s^2 g^{\alpha\beta} \Psi^\gamma_\alpha (\overline{\del}_\gamma \Psi^0_\beta) g^{a b} \big) u \overline{\del}_b u 
- s^2 g^{\alpha\beta} \Psi^\gamma_\alpha (\overline{\del}_\gamma \Psi^0_\beta) g^{a b} \overline{\del}_a u \overline{\del}_b u
\\
&\quad
+ \overline{\del}_a \big(s g^{\alpha \beta} \Psi^0_\alpha \Psi^0_\beta  g^{ab} \big) u \overline{\del}_b u 
+ s g^{\alpha \beta} \Psi^0_\alpha \Psi^0_\beta  g^{ab} \overline{\del}_a u \overline{\del}_b u.
\endaligned
\ee

\end{proposition}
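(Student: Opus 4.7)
My plan is to generalize the conformal energy identity derived in Lemma~\ref{lem:EE} to the quasilinear setting by constructing a multiplier adapted to the non-flat metric $g^{\alpha\beta}$. The strategy is to multiply $g^{\alpha\beta}\del_\alpha\del_\beta u = h$ by $sZ$ in the hyperboloidal frame, rewrite the left-hand side as $\del_s(\tfrac{1}{2} Z^2 + M_2)$ plus spatial divergences plus the error terms $sZM_1$ and $M_4$, integrate over $\Hcal_\tau^*$ (which kills the spatial divergences by the compact support of $u$), and then integrate in $\tau$.

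First I would decompose the quasilinear wave operator in the hyperboloidal frame. Using $\del_\alpha = \Psi_\alpha^\gamma \overline{\del}_\gamma$, one has
$$
g^{\alpha\beta}\del_\alpha\del_\beta u
= g^{\alpha\beta}\Psi_\alpha^\gamma\Psi_\beta^\delta \, \overline{\del}_\gamma \overline{\del}_\delta u
+ g^{\alpha\beta}\Psi_\alpha^\gamma \bigl( \overline{\del}_\gamma \Psi_\beta^\delta \bigr) \overline{\del}_\delta u.
$$
Splitting the Greek indices $\gamma, \delta$ into the $\del_s$--component and the spatial components $\overline{\del}_a$ and using the symmetry of $g^{\alpha\beta}$, this separates into a radial piece involving $\del_s^2 u$ and $\del_s\overline{\del}_a u$ with coefficients $A := g^{\alpha\beta}\Psi^0_\alpha\Psi^0_\beta$ and $B^a := g^{\alpha\beta}\Psi^0_\alpha\Psi^a_\beta$, a spatial piece $g^{\alpha\beta}\Psi^a_\alpha\Psi^b_\beta\,\overline{\del}_a \overline{\del}_b u$ (this is what the notation $g^{ab}\overline{\del}_a\overline{\del}_b u$ in $M_2$ refers to), and first-order corrections involving $C := g^{\alpha\beta}\Psi^\gamma_\alpha(\overline{\del}_\gamma \Psi^0_\beta)$ and analogous spatial coefficients coming from transition-matrix derivatives. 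As a sanity check, in the linear case $g^{\alpha\beta}=-m^{\alpha\beta}$ one verifies $A\equiv 1$ and $2sB^a = 2x^a$, so that $Z$ collapses to the classical multiplier $Ku + u = s\del_s u + 2x^a\overline{\del}_a u + u$ used in Lemma~\ref{lem:EE}.

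Second, for the radial piece I would check the key identity
$$
sZ \bigl( A \del_s^2 u + 2 B^a \del_s \overline{\del}_a u + C \del_s u \bigr) = \tfrac{1}{2}\del_s Z^2 + sZ M_1 + (\text{spatial divergence}),
$$
by expanding $Z\del_s Z$ directly and collecting, as the defect, the terms arising whenever $\del_s$ lands on the non-constant coefficients $A$, $B^a$, $C$; the residual contributions assemble exactly into $M_1$ as stated. For the spatial second-order piece $g^{ab}\overline{\del}_a\overline{\del}_b u$ I would integrate by parts against the four pieces of $Z$. The $sA\del_s u$-piece produces $\tfrac{1}{2}\del_s(s^2 A g^{ab}\overline{\del}_a u\,\overline{\del}_b u) = -\del_s M_2$ together with a $\del_s(s^2 A g^{ab})$ contribution absorbed in $M_4$ and a spatial divergence; the $2sB^a\overline{\del}_a u$, $sCu$, and $-Au$ pieces similarly integrate by parts spatially into the remaining components of $M_4$ plus further spatial divergences. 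Combining yields the pointwise identity (modulo spatial divergences)
$$
sZ\cdot g^{\alpha\beta}\del_\alpha\del_\beta u = \del_s\bigl(\tfrac{1}{2}Z^2 + M_2\bigr) + sZ M_1 + M_4,
$$
and substituting $h$ on the left, integrating over $\Hcal_\tau^*$ and then in $\tau$ from $s_0$ to $s$ delivers \eqref{eq:quasi-conformal}.

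The main obstacle will be the bookkeeping. In the setting of Lemma~\ref{lem:EE} the coefficients $A$, $B^a$, and $g^{ab}$ are either constant or simple explicit functions, so most derivative terms collapse; in the quasilinear regime every $\del_s$ or $\overline{\del}_a$ acting on $A$, $B^a$, $C$ or on $g^{ab}$ produces a new lower-order contribution, which is why the expressions for $M_1$ and $M_4$ in the statement are so lengthy. The real verification consists in checking that every such contribution is accounted for by exactly one of the listed terms in $M_1$ or $M_4$, and that nothing is lost at the spatial boundary so the divergences genuinely vanish upon integrating over $\Hcal_\tau^*$.
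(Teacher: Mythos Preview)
Your proposal is correct and follows essentially the same approach as the paper: transform to the hyperboloidal frame, recognize that the non-spatial part of $g^{\alpha\beta}\del_\alpha\del_\beta u$ reorganizes as $\tfrac{1}{s}\del_s Z + M_1$, multiply by $sZ$, and then integrate by parts the remaining $sZ\,g^{ab}\overline{\del}_a\overline{\del}_b u$ against each of the four summands of $Z$ to produce $\del_s M_2$, spatial divergences, and $M_4$. The only cosmetic difference is that the paper first isolates the identity $g^{\alpha\beta}\del_\alpha\del_\beta u = \tfrac{1}{s}\del_s Z + M_1 + g^{ab}\overline{\del}_a\overline{\del}_b u$ before multiplying, so no spatial divergence appears in the ``radial'' step (your parenthetical ``$+$ spatial divergence'' there is harmless but unnecessary).
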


\begin{proof}

Inserting the relation
$$
\del_\alpha = \Psi^\beta_\alpha \overline{\del}_\beta
$$
into the above equation, we have
$$
\aligned
g^{\alpha\beta} \del_\alpha \del_\beta u
&=
g^{\alpha \beta} (\Psi^\gamma_\alpha \overline{\del}_\gamma) (\Psi^\eta_\beta \overline{\del}_\eta u)
\\
&=
g^{\alpha \beta} \Psi^\gamma_\alpha \Psi^\eta_\beta \overline{\del}_\gamma \overline{\del}_\eta u
+
g^{\alpha\beta} \Psi^\gamma_\alpha (\overline{\del}_\gamma \Psi^\eta_\beta) \overline{\del}_\eta u.
\endaligned
$$
To succeed, we get
$$
\aligned
g^{\alpha\beta} \del_\alpha \del_\beta u
&=
g^{\alpha\beta} \Psi^0_\alpha \Psi^0_\beta \del_s\del_s u 
+ 2 g^{\alpha\beta} \Psi^0_\alpha \Psi^a_\beta \del_s \overline{\del}_a u
+ g^{\alpha \beta} \Psi^a_\alpha \Psi^b_\beta \overline{\del}_a \overline{\del}_b u
\\
&+g^{\alpha \beta} \Psi^\gamma_\alpha (\overline{\del}_\gamma \Psi^0_\beta) \del_s u
+ g^{\alpha\beta} \Psi^\gamma_\alpha (\overline{\del}_\gamma \Psi^a_\beta) \overline{\del}_a u
\\
&=
\del_s \big( g^{\alpha\beta} \Psi^0_\alpha \Psi^0_\beta \del_s u \big)
-\del_s \big( g^{\alpha\beta} \Psi^0_\alpha \Psi^0_\beta \big) \del_s u
+
2 \del_s \big( g^{\alpha\beta} \Psi^0_\alpha \Psi^a_\beta \overline{\del}_a u \big)
-2 \del_s \big( g^{\alpha \beta} \Psi^0_\alpha \Psi^a_\beta \big) \overline{\del}_a u
\\
&+ g^{\alpha \beta} \Psi^a_\alpha \Psi^b_\beta \overline{\del}_a \overline{\del}_b u
+ \del_s \big( g^{\alpha\beta} \Psi^\gamma_\alpha (\overline{\del}_\gamma \Psi^0_\beta) u \big)
- \del_s \big( g^{\alpha\beta} \Psi^\gamma_\alpha (\overline{\del}_\gamma \Psi^0_\beta) \big) u,
\endaligned
$$
in which we used the relation $\overline{\del}_\gamma \Psi^a_\beta = 0$.

Furthermore, we find
$$
\aligned
&  g^{\alpha\beta} \del_\alpha \del_\beta u
\\
&=
{1\over s} \del_s \big( g^{\alpha\beta} \Psi^0_\alpha \Psi^0_\beta s \del_s u \big)
- {1\over s}  g^{\alpha\beta} \Psi^0_\alpha \Psi^0_\beta \del_s u 
- \del_s \big( g^{\alpha\beta} \Psi^0_\alpha \Psi^0_\beta \big) \del_s u 
+ 2 {1\over s} \del_s \big( g^{\alpha\beta} \Psi^0_\alpha \Psi^a_\beta s \overline{\del}_a u \big)
\\
&- 2 {1\over s} g^{\alpha\beta} \Psi^0_\alpha \Psi^a_\beta \overline{\del}_a u 
- 2 \del_s \big( g^{\alpha\beta} \Psi^0_\alpha \Psi^a_\beta \big) \overline{\del}_a u 
+ {1\over s} \del_s \big( g^{\alpha\beta} \Psi^\gamma_\alpha (\overline{\del}_\gamma \Psi^0_\beta) s u \big)
\\
&- {1\over s}  g^{\alpha\beta} \Psi^\gamma_\alpha (\overline{\del}_\gamma \Psi^0_\beta) u 
- \del_s \big( g^{\alpha\beta} \Psi^\gamma_\alpha (\overline{\del}_\gamma \Psi^0_\beta) \big) u 
+g^{\alpha\beta} \Psi^a_\alpha \Psi^b_\beta \overline{\del}_a \overline{\del}_b u
\\
&=
{1\over s} \del_s \Big( g^{\alpha\beta} \Psi^0_\alpha \Psi^0_\beta s \del_s u 
+ 2 g^{\alpha\beta} \Psi^0_\alpha \Psi^a_\beta s \overline{\del}_a u 
+ g^{\alpha\beta} \Psi^\gamma_\alpha (\overline{\del}_\gamma \Psi^0_\beta) s u
- g^{\alpha\beta} \Psi^ 0_\alpha \Psi^0_\beta u \Big)
\\
& + {1\over s} \del_s \big( g^{\alpha\beta} \Psi^ 0_\alpha \Psi^0_\beta \big)  u                  %- {1\over s} g^{\alpha\beta} \Psi^ 0_\alpha \Psi^0_\beta \del_s u
- \del_s (g^{\alpha\beta} \Psi^0_\alpha \Psi^0_\beta) \del_s u       %+ {1\over s} \del_s \big( g^{\alpha \beta} \Psi^0_\alpha \Psi^0_\beta \big) u + \del_s \del_s \big( g^{\alpha \beta} \Psi^0_\alpha \Psi^0_\beta \big) u                                   
-2 {1\over s} g^{\alpha\beta} \Psi^0_\alpha \Psi^a_\beta \overline{\del}_a u
\\
&- 2 \del_s (g^{\alpha\beta} \Psi^0_\alpha \Psi^a_\beta ) \overline{\del}_a u
-{1\over s} g^{\alpha\beta} \Psi^\gamma_\alpha (\overline{\del}_\gamma \Psi^0_\beta) u
- \del_s \big( g^{\alpha\beta} \Psi^\gamma_\alpha  (\overline{\del}_\gamma \Psi^0_\beta) \big) u
+g^{a b} \overline{\del}_a \overline{\del}_b u,
%\\
%&=
%{1\over s} \del_s \Big( g^{\alpha\beta} \Psi^0_\alpha \Psi^0_\beta s \del_s u 
%+ 2 g^{\alpha\beta} \Psi^0_\alpha \Psi^a_\beta s \overline{\del}_a u 
%+ g^{\alpha\beta} \Psi^\gamma_\alpha (\overline{\del}_\gamma \Psi^0_\beta) s u
%- g^{\alpha\beta} \Psi^ 0_\alpha \Psi^0_\beta u
%- s \del_s (g^{\alpha \beta} \Psi^0_\alpha \Psi^0_\beta) u \Big)
%\\
%& + {1\over s} \del_s \big( g^{\alpha\beta} \Psi^ 0_\alpha \Psi^0_\beta \big)  u                  %- {1\over s} g^{\alpha\beta} \Psi^ 0_\alpha \Psi^0_\beta \del_s u
%+ {1\over s} \del_s \big( g^{\alpha \beta} \Psi^0_\alpha \Psi^0_\beta \big) u + \del_s \del_s \big( g^{\alpha \beta} \Psi^0_\alpha \Psi^0_\beta \big) u                                 % - \del_s (g^{\alpha\beta} \Psi^0_\alpha \Psi^0_\beta) \del_s u 
%-2 {1\over s} g^{\alpha\beta} \Psi^0_\alpha \Psi^a_\beta \overline{\del}_a u
%\\
%&- 2 \del_s (g^{\alpha\beta} \Psi^0_\alpha \Psi^a_\beta ) \overline{\del}_a u
%-{1\over s} g^{\alpha\beta} \Psi^\gamma_\alpha (\overline{\del}_\gamma \Psi^0_\beta) u
%- \del_s \big( g^{\alpha\beta} \Psi^\gamma_\alpha  (\overline{\del}_\gamma \Psi^0_\beta) \big) u
%+g^{a b} \overline{\del}_a \overline{\del}_b u,
\endaligned
$$
in which we used the relation $g^{a b} = g^{\alpha\beta} \Psi^a_\alpha \Psi^b_\beta$.

Next, we denote 
$$
\aligned
Z 
&= 
g^{\alpha\beta} \Psi^0_\alpha \Psi^0_\beta s \del_s u 
+ 2 g^{\alpha\beta} \Psi^0_\alpha \Psi^a_\beta s \overline{\del}_a u 
+ g^{\alpha\beta} \Psi^\gamma_\alpha (\overline{\del}_\gamma \Psi^0_\beta) s u
- g^{\alpha\beta} \Psi^ 0_\alpha \Psi^0_\beta u,
\\
M_1
&=
 {1\over s} \del_s \big( g^{\alpha\beta} \Psi^ 0_\alpha \Psi^0_\beta \big)  u   %- {1\over s} g^{\alpha\beta} \Psi^ 0_\alpha \Psi^0_\beta \del_s u
- \del_s (g^{\alpha\beta} \Psi^0_\alpha \Psi^0_\beta) \del_s u 
-2 {1\over s} g^{\alpha\beta} \Psi^0_\alpha \Psi^a_\beta \overline{\del}_a u
- 2 \del_s (g^{\alpha\beta} \Psi^0_\alpha \Psi^a_\beta ) \overline{\del}_a u
\\
&-{1\over s} g^{\alpha\beta} \Psi^\gamma_\alpha (\overline{\del}_\gamma \Psi^0_\beta) u
- \del_s \big( g^{\alpha\beta} \Psi^\gamma_\alpha  (\overline{\del}_\gamma \Psi^0_\beta) \big) u,
\endaligned
$$
then we are led to
$$
\aligned
s Z g^{\alpha\beta} \del_\alpha \del_\beta u
=
{1\over 2} \del_s \big( Z^2 \big)
+ s Z M_1
+ s Z g^{a b} \overline{\del}_a \overline{\del}_b u.
\endaligned
$$

As for the last term, we split it into four parts:
$$
\aligned
A_1
&= s g^{\alpha\beta} \Psi^0_\alpha \Psi^0_\beta s \del_s u  g^{a b} \overline{\del}_a \overline{\del}_b u
\\
&= s^2 \overline{\del}_a \big( g^{\alpha\beta} \Psi^0_\alpha \Psi^0_\beta \del_s u  g^{a b} \overline{\del}_b u \big)
- s^2 \overline{\del}_a \big( g^{\alpha\beta} \Psi^0_\alpha \Psi^0_\beta  g^{a b}  \big)  \del_s u \overline{\del}_b u
- {1\over 2} s^2 g^{\alpha\beta} \Psi^0_\alpha \Psi^0_\beta  g^{a b} \del_s \big( \overline{\del}_a u \overline{\del}_b u \big)
\\
&= s^2 \overline{\del}_a \big( g^{\alpha\beta} \Psi^0_\alpha \Psi^0_\beta \del_s u  g^{a b} \overline{\del}_b u \big)
- {1\over 2} \del_s \big( s^2 g^{\alpha\beta} \Psi^0_\alpha \Psi^0_\beta  g^{a b} \overline{\del}_a u \overline{\del}_b u \big)
+ {1\over 2} \del_s \big( s^2 g^{\alpha\beta} \Psi^0_\alpha \Psi^0_\beta  g^{a b} \big) \overline{\del}_a u \overline{\del}_b u
\\
&- s^2 \overline{\del}_a \big( g^{\alpha\beta} \Psi^0_\alpha \Psi^0_\beta  g^{a b}  \big)  \del_s u \overline{\del}_b u,
\endaligned
$$

$$
\aligned
A_2
&= 2 s g^{\alpha\beta} \Psi^0_\alpha \Psi^a_\beta s \overline{\del}_a u   g^{a' b'} \overline{\del}_{a'} \overline{\del}_{b'} u
\\
&= 2 s^2 g^{\alpha\beta} \Psi^0_\alpha \Psi^a_\beta g^{a' b'} \overline{\del}_{a'} \big( \overline{\del}_a u \overline{\del}_{b'} u \big)
- s^2 g^{\alpha\beta} \Psi^0_\alpha \Psi^a_\beta g^{a' b' } \overline{\del}_{a} \big( \overline{\del}_{a'} u \overline{\del}_{b'} u \big)
\\
&= \overline{\del}_{a'} \big( 2 s^2 g^{\alpha\beta} \Psi^0_\alpha \Psi^a_\beta g^{a' b'} \overline{\del}_a u \overline{\del}_{b'} u \big)
- 2 \overline{\del}_{a'} \big( s^2 g^{\alpha\beta} \Psi^0_\alpha \Psi^a_\beta g^{a' b'} \big) \overline{\del}_a u \overline{\del}_{b'} u
\\
&- \overline{\del}_a \big( s^2 g^{\alpha\beta} \Psi^0_\alpha \Psi^a_\beta g^{a' b'} \overline{\del}_{a'} u \overline{\del}_{b'} u \big)
+ \overline{\del}_a \big( s^2 g^{\alpha\beta} \Psi^0_\alpha \Psi^a_\beta g^{a' b'} \big) \overline{\del}_{a'} u \overline{\del}_{b'} u,
\endaligned
$$
and
$$
\aligned
A_3
&= s g^{\alpha\beta} \Psi^\gamma_\alpha (\overline{\del}_\gamma \Psi^0_\beta) s u g^{a b} \overline{\del}_a \overline{\del}_b u
\\
&= s^2 g^{\alpha\beta} \Psi^\gamma_\alpha (\overline{\del}_\gamma \Psi^0_\beta) g^{a b} \overline{\del}_a \big( u \overline{\del}_b u \big)
- s^2 g^{\alpha\beta} \Psi^\gamma_\alpha (\overline{\del}_\gamma \Psi^0_\beta) g^{a b} \overline{\del}_a u \overline{\del}_b u
\\
&= \overline{\del}_a \big( s^2 g^{\alpha\beta} \Psi^\gamma_\alpha (\overline{\del}_\gamma \Psi^0_\beta) g^{a b} u \overline{\del}_b u \big)
- \overline{\del}_a \big( s^2 g^{\alpha\beta} \Psi^\gamma_\alpha (\overline{\del}_\gamma \Psi^0_\beta) g^{a b} \big) u \overline{\del}_b u 
- s^2 g^{\alpha\beta} \Psi^\gamma_\alpha (\overline{\del}_\gamma \Psi^0_\beta) g^{a b} \overline{\del}_a u \overline{\del}_b u,
\endaligned
$$

$$
\aligned
A_4
&= -s g^{\alpha \beta} \Psi^0_\alpha \Psi^0_\beta u g^{ab} \overline{\del}_a \overline{\del}_b u
\\
&= -s g^{\alpha \beta} \Psi^0_\alpha \Psi^0_\beta  g^{ab} \overline{\del}_a \big( u \overline{\del}_b u \big)
+ s g^{\alpha \beta} \Psi^0_\alpha \Psi^0_\beta  g^{ab} \overline{\del}_a u \overline{\del}_b u
\\
&=
\overline{\del}_a \big(-s g^{\alpha \beta} \Psi^0_\alpha \Psi^0_\beta  g^{ab}  u \overline{\del}_b u \big)
+ \overline{\del}_a \big(s g^{\alpha \beta} \Psi^0_\alpha \Psi^0_\beta  g^{ab} \big) u \overline{\del}_b u 
+ s g^{\alpha \beta} \Psi^0_\alpha \Psi^0_\beta  g^{ab} \overline{\del}_a u \overline{\del}_b u.
\endaligned
$$

Then, we get
$$
\aligned
s Z g^{a b} \overline{\del}_a \overline{\del}_b u
&=
A_1 + A_2 + A_3 + A_4
=: \del_s M_2 + M_3 + M_4
\\
M_2 
&= - {1\over 2}  s^2 g^{\alpha\beta} \Psi^0_\alpha \Psi^0_\beta  g^{a b} \overline{\del}_a u \overline{\del}_b u,
\\
M_3
&=
s^2 \overline{\del}_a \big( g^{\alpha\beta} \Psi^0_\alpha \Psi^0_\beta \del_s u  g^{a b} \overline{\del}_b u \big)
+ \overline{\del}_{a'} \big( 2 s^2 g^{\alpha\beta} \Psi^0_\alpha \Psi^a_\beta g^{a' b'} \overline{\del}_a u \overline{\del}_{b'} u \big)
\\
&- \overline{\del}_a \big( s^2 g^{\alpha\beta} \Psi^0_\alpha \Psi^a_\beta g^{a' b'} \overline{\del}_{a'} u \overline{\del}_{b'} u \big)
+ \overline{\del}_a \big( s^2 g^{\alpha\beta} \Psi^\gamma_\alpha (\overline{\del}_\gamma \Psi^0_\beta) g^{a b} u \overline{\del}_b u \big)
- \overline{\del}_a \big(s g^{\alpha \beta} \Psi^0_\alpha \Psi^0_\beta  g^{ab}  u \overline{\del}_b u \big),
\\
M_4
&=
{1\over 2} \del_s \big( s^2 g^{\alpha\beta} \Psi^0_\alpha \Psi^0_\beta  g^{a b} \big) \overline{\del}_a u \overline{\del}_b u
- s^2 \overline{\del}_a \big( g^{\alpha\beta} \Psi^0_\alpha \Psi^0_\beta  g^{a b}  \big)  \del_s u \overline{\del}_b u
\\
&- 2 \overline{\del}_{a'} \big( s^2 g^{\alpha\beta} \Psi^0_\alpha \Psi^a_\beta g^{a' b'} \big) \overline{\del}_a u \overline{\del}_{b'} u
+ \overline{\del}_a \big( s^2 g^{\alpha\beta} \Psi^0_\alpha \Psi^a_\beta g^{a' b'} \big) \overline{\del}_{a'} u \overline{\del}_{b'} u,
\\
&- \overline{\del}_a \big( s^2 g^{\alpha\beta} \Psi^\gamma_\alpha (\overline{\del}_\gamma \Psi^0_\beta) g^{a b} \big) u \overline{\del}_b u 
- s^2 g^{\alpha\beta} \Psi^\gamma_\alpha (\overline{\del}_\gamma \Psi^0_\beta) g^{a b} \overline{\del}_a u \overline{\del}_b u
\\
&+ \overline{\del}_a \big(s g^{\alpha \beta} \Psi^0_\alpha \Psi^0_\beta  g^{ab} \big) u \overline{\del}_b u 
+ s g^{\alpha \beta} \Psi^0_\alpha \Psi^0_\beta  g^{ab} \overline{\del}_a u \overline{\del}_b u,
\endaligned
$$
and we find that the term $M_2$ will contribute to the left hand side, $M_3$ will be gone after the spacial integration, and $M_4$ will be moved to the right hand side as a source term.

Now, we have
$$
\aligned
& s Z g^{\alpha\beta} \del_\alpha \del_\beta u
= s Z h
\\
&=
{1\over 2} \del_s \big( Z^2 \big)
+ s Z M_1
+ \del_s M_2 + M_3 + M_4,
\endaligned
$$
and we further get
$$
\int_{\Hcal_{s}^*} {1\over 2} Z^2 + M_2  \, dx
=
\int_{\Hcal_{s_0}^*} {1\over 2} Z^2 + M_2  \, dx
+
\int_{s_0}^s \int_{\Hcal_{\tau}^*} \tau Z h - \tau Z M_1 - M_4 \, dx d\tau. \qedhere
$$
\end{proof}

%=================================================================================

\subsection{Null form estimates and commutator estimates}

We need the following result to estimate the classical null forms, which can be found in \cite{PLF-YM-book} for instance.

\begin{lemma}\label{lem:null-classical}
Let $P^{\gamma \alpha \beta}$ satisfy the null condition, then for all nice functions $u, v, w$ supported in $\{ (t, x) : t \geq |x| + 1 \}$, it holds 
\be
\aligned 
|P^{\gamma \alpha\beta} \del_\gamma v \del_\alpha \del_\beta u|
&\lesssim
(s/t)^2 | \del_t v \del_t\del_t u |
+
\sum_a |\underdel_a v \del \del u |
+
\sum_a |\del v  \underdel_a \del u|
+
t^{-1} | \del v \del u|,
\\
|P^{\gamma \alpha\beta} \del_\gamma v \del_\alpha u \del_\beta w|
&\lesssim
(s/t)^2 | \del_t v \del_t u \del_t w |
+
\sum_a |\underdel_a v \del_t u \del_t w |
+
\sum_a |\del_t v \underdel_a u \del_t w|
+
\sum_a |\del_t v \del_t u \underdel_a w|.
\endaligned
\ee
\end{lemma}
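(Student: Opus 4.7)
My plan is to expand every partial derivative appearing in $P^{\gamma\alpha\beta}\del_\gamma v\,\del_\alpha\del_\beta u$ (and in its cubic analogue) in the semi-hyperboloidal frame $\underdel_0=\del_t$, $\underdel_a=(x_a/t)\del_t+\del_a$ already introduced in Section~\ref{sec:pre}, and to separate the unique ``all-bad'' term (in which every $\del$ is replaced by $\underdel_0$) from all the rest. Since $\del_0=\underdel_0$ and $\del_a=\underdel_a-(x_a/t)\,\underdel_0$, writing $\del_\alpha=\tilde\Psi^\beta_\alpha\underdel_\beta$ I have $\tilde\Psi^0_0=1$, $\tilde\Psi^0_a=-x_a/t$, $\tilde\Psi^b_0=0$, $\tilde\Psi^b_a=\delta^b_a$. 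For the quadratic case I expand further
\[
\del_\alpha\del_\beta u = \tilde\Psi^{\alpha'}_\alpha\tilde\Psi^{\beta'}_\beta\,\underdel_{\alpha'}\underdel_{\beta'}u + \tilde\Psi^{\alpha'}_\alpha\bigl(\underdel_{\alpha'}\tilde\Psi^{\beta'}_\beta\bigr)\underdel_{\beta'}u.
\]
The only non-constant entries of $\tilde\Psi$ are $\tilde\Psi^0_a=-x_a/t$, whose $\underdel_{\alpha'}$-derivatives are all $O(1/t)$, so the commutator summand above contributes exactly the $t^{-1}|\del v\,\del u|$ term after multiplying by $\del_\gamma v$. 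The null condition will be invoked solely to control the one remaining ``all-bad'' coefficient.

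\textbf{All-bad coefficient and the null condition.} The unique piece of the fully expanded product carrying no $\underdel_a$ is $\mathcal{N}(t,x)\,\underdel_0 v\,\underdel_0\underdel_0 u$ (respectively $\mathcal{N}(t,x)\,\underdel_0 v\,\underdel_0 u\,\underdel_0 w$ in the cubic case), with $\mathcal{N}(t,x):=P^{\gamma\alpha\beta}\zeta_\gamma\zeta_\alpha\zeta_\beta$ and $\zeta=(1,-x_1/t,-x_2/t)$. The key observation is that $\zeta$ is ``almost null'': $m^{\mu\nu}\zeta_\mu\zeta_\nu=-1+|x|^2/t^2=-s^2/t^2$. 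Viewing $\xi\mapsto P^{\gamma\alpha\beta}\xi_\gamma\xi_\alpha\xi_\beta$ as a homogeneous cubic that vanishes on the null cone $m(\xi,\xi)=0$, the standard algebraic identity (any polynomial of degree $\le 3$ in $y_1,y_2$ vanishing on the unit circle is divisible by $y_1^2+y_2^2-1$, as one sees by reducing modulo $y_1^2+y_2^2-1$ to a polynomial of shape $f(y_2)+y_1 g(y_2)$) produces constants $Q^\delta$ depending only on $P$ with
\[
P^{\gamma\alpha\beta}\xi_\gamma\xi_\alpha\xi_\beta = \bigl(Q^\delta\xi_\delta\bigr)\,m^{\mu\nu}\xi_\mu\xi_\nu.
\]
Specialising to $\xi=\zeta$ and using $|\zeta_\delta|\le 1$ on $\Kcal$ gives $|\mathcal{N}(t,x)|\lesssim s^2/t^2$, which yields the $(s/t)^2|\del_t v\,\del_t\del_t u|$ (resp.~$(s/t)^2|\del_t v\,\del_t u\,\del_t w|$) term.

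\textbf{Remaining terms and the cubic case.} Every other summand in the $\tilde\Psi$-expansion carries at least one good derivative $\underdel_a$; sorting them by which factor the $\underdel_a$ falls on produces the $\sum_a|\underdel_a v\,\del\del u|$ and $\sum_a|\del v\,\underdel_a\del u|$ contributions (the symmetry $P^{\gamma\alpha\beta}=P^{\gamma\beta\alpha}$ merges the two positions of the Hessian). The cubic estimate is treated identically but is strictly simpler: no commutator is generated, and the same split between the single all-bad term and the three one-good-derivative groups (indexed by which of the three derivative slots receives the $\underdel_a$) yields the four terms on the right-hand side of the second inequality. The only genuinely non-routine step is the divisibility identity above; once that is in hand, the remainder of the proof is careful bookkeeping of the frame change.
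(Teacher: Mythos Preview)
Your argument is correct and follows the same overall structure as the paper's proof: pass to the semi-hyperboloidal frame, isolate the single ``all-bad'' term $\mathcal N(t,x)\,\del_t v\,\del_t\del_t u$ (resp.\ $\mathcal N\,\del_t v\,\del_t u\,\del_t w$), and check that every remaining summand either carries a good derivative $\underdel_a$ or a coefficient of size $O(t^{-1})$ coming from differentiating $x_a/t$.

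The one genuine difference is how you bound $\mathcal N=P^{\gamma\alpha\beta}\zeta_\gamma\zeta_\alpha\zeta_\beta$. The paper subtracts the exact null evaluation $P^{\gamma\alpha\beta}\xi_\gamma\xi_\alpha\xi_\beta=0$ at the null vector $\xi=(r/t,-x_1/t,-x_2/t)$ and then uses $|1-(r/t)^k|\lesssim (t-r)/t\lesssim (s/t)^2$ term by term. You instead invoke the algebraic factorization $P^{\gamma\alpha\beta}\xi_\gamma\xi_\alpha\xi_\beta=(Q^\delta\xi_\delta)\,m^{\mu\nu}\xi_\mu\xi_\nu$, which immediately gives $|\mathcal N|\lesssim |m(\zeta,\zeta)|=(s/t)^2$. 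Your route is cleaner and yields a reusable identity (indeed the paper records the conclusion $|P^{\gamma\alpha\beta}\Phi_\gamma\Phi_\alpha\Phi_\beta|\lesssim (s/t)^2$ separately in the later ``Step III'' lemma, without proof); the paper's route is more hands-on and avoids the small detour through the divisibility lemma. Both are standard and neither requires anything the other does not already have available.
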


\begin{proof}
The proof can be found in \cite{PLF-YM-book}, but here we also provide one elementary proof.

We express the term $P^{\gamma \alpha\beta} \del_\gamma v \del_\alpha \del_\beta u$ in the semi-hyperboloidal frame
$\big(\del_t, \underdel_a \big),$
with the relation
$$
\del_a = - {x_a \over t} \del_t + \underdel_a.
$$
We find that
$$
\aligned
P^{\gamma \alpha\beta} \del_\gamma v \del_\alpha \del_\beta u
&=
P^{000} \del_t v \del_t \del_t u + P^{00a} \del_t v \del_t \del_a u + P^{0a0} \del_t v \del_a \del_t u
+ P^{0ab} \del_t v \del_a \del_b u + P^{a00} \del_a v \del_t \del_t u 
\\
&+ P^{a0b} \del_a v \del_t \del_b u
+ P^{ab0} \del_a v \del_b \del_t u + P^{abc} \del_a v \del_b \del_c u
\\
&=
P^{000} \del_t v \del_t \del_t u 
+ P^{00a} \del_t v \big( -{x_a \over t} \del_t \del_t  u + \underdel_a \del_t u \big)
+ P^{0a0} \del_t v \big( -{x_a \over t} \del_t \del_t  u + \underdel_a \del_t u \big)
\\
&+ P^{0ab} \del_t v \big( - {x_a \over t} \del_t  + \underdel_a \big) \big(- {x_b \over t} \del_t u + \underdel_b u \big)
+ P^{a00} \big( - {x_a \over t} \del_t v + \underdel_a v \big) \del_t \del_t u 
\\
&+ P^{a0b} \big( - {x_a \over t} \del_t v + \underdel_a v \big) \big( -{x_b \over t} \del_t \del_t  u + \underdel_b \del_t u \big)
\\
&+ P^{ab0} \big( - {x_a \over t} \del_t v + \underdel_a v \big) \big( -{x_b \over t} \del_t \del_t  u + \underdel_b \del_t u \big)
\\
&+ P^{abc} \big( - {x_a \over t} \del_t v + \underdel_a v \big) \big( - {x_b \over t} \del_t + \underdel_b \big) \big(- {x_c \over t} \del_t u + \underdel_c u \big)
= B_1 + B_2,
\endaligned
$$
in which 
$$
\aligned
B_1 
&=
\del_t v \del_t \del_t u \Big( P^{000} + P^{00a} \big( - {x_a \over t} \big) + P^{0a0} \big( - {x_a \over t} \big)
+ P^{0ab} \big( - {x_a \over t} \big) \big( - {x_b \over t} \big) + P^{a00} \big( - {x_a \over t} \big) 
\\
&+ P^{a0b} \big( - {x_a \over t} \big) \big( - {x_b \over t} \big)
+ P^{ab0} \big( - {x_a \over t} \big) \big( - {x_b \over t} \big) + P^{abc} \big( - {x_a \over t} \big) \big( - {x_b \over t} \big) \big( - {x_c \over t} \big)
\Big),
\\
B_2
&=
P^{00a} \del_t v \underdel_a \del_t u
+ P^{0a0} \del_t v \underdel_a \del_t u
+ P^{0ab} \del_t v \big( -{x_a x_b \over t^3} \del_t u - {x_a \over t} \del_t \underdel_b u - {x_b \over t} \underdel_a \del_t u - {\delta_{ab} \over t} \del_t u 
\\
&+ {x_a x_b \over t^3} \del_t u  + \underdel_a \underdel_b u \big)
+ P^{a00} \underdel_a v \del_t \del_t u
+ P^{a0b} \big(  - {x_a \over t}  \del_t v \underdel_b \del_t u - {x_b \over t} \underdel_a v \del_t \del_t u + \underdel_a v \underdel_b \del_t u   \big)
\\
&+ P^{ab0} \big(  - {x_a \over t}  \del_t v \underdel_b \del_t u - {x_b \over t} \underdel_a v  \del_t \del_t u + \underdel_a v \underdel_b \del_t u   \big)
+ P^{abc} \big( {x_a x_b \over t^2} \del_t v \del_t \underdel_c u + {x_a \delta_{bc} \over t^2} \del_t v \del_t u 
\\
&- {x_a x_b x_c \over t^4} \del_t v \del_t u + {x_a x_c \over t^2} \del_t v \underdel_b \del_t u - {x_a \over t} \del_t v \underdel_b \underdel_c u
- {x_b x_c \over t^3} \underdel_a v \del_t u - {x_b \over t} \underdel_a v \del_t \underdel_c u - {\delta_{bc} \over t} \underdel_a v \del_t u 
\\
&+ {x_b x_c \over t^3} \underdel_a v \del_t u - {x_c \over t} \underdel_a v \underdel_b \del_t u + \underdel_a v \underdel_b \underdel_c u \big).  
\endaligned
$$

Since $P^{\gamma \alpha\beta}$ is null, we have
$$
\aligned
& 
P^{000} {r^3\over t^3} + P^{00a} \big( - {x_a \over t} \big) {r^2\over t^2} + P^{0a0} \big( - {x_a \over t} \big) {r^2\over t^2} 
+ P^{0ab} \big( - {x_a \over t} \big) \big( - {x_b \over t} \big) {r\over t}  + P^{a00} \big( - {x_a \over t} \big) {r^2\over t^2} 
\\
&+ P^{a0b} \big( - {x_a \over t} \big) \big( - {x_b \over t} \big) {r\over t} 
+ P^{ab0} \big( - {x_a \over t} \big) \big( - {x_b \over t} \big) {r\over t}  + P^{abc} \big( - {x_a \over t} \big) \big( - {x_b \over t} \big) \big( - {x_c \over t} \big)
= 0.
\endaligned
$$
This deduces that
$$
\aligned
B_1 
&=
\del_t v \del_t \del_t u \Big( P^{000} \big( 1-{r^3\over t^3} \big) + P^{00a} \big( - {x_a \over t} \big) \big( 1-{r^2\over t^2} \big) + P^{0a0} \big( - {x_a \over t} \big) \big( 1-{r^2\over t^2} \big)
\\
&+ P^{0ab} \big( - {x_a \over t} \big) \big( - {x_b \over t} \big) \big( 1-{r\over t} \big) + P^{a00} \big( - {x_a \over t} \big) \big( 1-{r^2\over t^2} \big)
+ P^{a0b} \big( - {x_a \over t} \big) \big( - {x_b \over t} \big) \big( 1-{r\over t} \big)
\\
&+ P^{ab0} \big( - {x_a \over t} \big) \big( - {x_b \over t} \big) \big( 1-{r\over t} \big) 
\Big),
\endaligned
$$
Then the facts
$$
{r\over t} \leq 1,
\qquad
\big|1-{r^3\over t^3}\big| + \big|1-{r^2\over t^2}\big| + \big|1-{r\over t}\big|
\lesssim {t-r \over t}
\lesssim {s^2 \over t^2}
$$
yield that
$$
|B_1|
\lesssim
{s^2 \over t^2}\big| \del_t v \del_t \del_t u \big|.
$$

As for $B_2$ part, the facts
$$
{r\over t} \leq 1,
\qquad
\sum_a |\underdel_a u| \lesssim |\del u|
$$
imply that
$$
|B_2|
\lesssim \sum_a \big| \del v \underdel_a \del u  \big| + \sum_a \big| \underdel_a v \del \del u \big| + {1\over t} \big| \del v \del u \big|.
$$

In a similar (and easier) way, we can get the desired bound for $P^{\gamma \alpha\beta} \del_\gamma v \del_\alpha u \del_\beta w$.
\end{proof}

The following result tells us that acting some vector fields to the null forms still leads to null forms, and one refers to \cite{Hormander} for more details.

\begin{lemma}\label{lem:comm1}%[To double-check!]
Let $N(v, u) \footnote{$N_d (v, u) = P_d^{\gamma \alpha\beta} \del_\gamma v \del_\alpha \del_\beta u$, with $P_d^{\gamma \alpha\beta}$ null and $N_0 (v, u) = N (v, u)$. } = P^{\gamma \alpha\beta} \del_\gamma v \del_\alpha \del_\beta u$, then we have ($\Gamma \in \{ \del_0, \del_1, \del_2, \Omega_{12}, L_0, L_1, L_2 \}$)
\be
\aligned
%&\del_\eta (P^{\gamma \alpha\beta} \del_\gamma v \del_\alpha \del_\beta u)
%= N(\del_\eta v, u) + N(v, \del_\eta u),
%\\
&\Gamma^I (P^{\gamma \alpha\beta} \del_\gamma v \del_\alpha \del_\beta u)
=
\sum_{|I_1| + |I_2| + d = |I|} N_d (\Gamma^{I_1} v, \Gamma^{I_2} u).
\endaligned
\ee
\end{lemma}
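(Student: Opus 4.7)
The plan is to proceed by induction on $|I|$, reducing the whole statement to a single-step identity for one admissible field. For $\Gamma \in V$ and a null coefficient $P_d^{\gamma\alpha\beta}$, I will prove
$$
\Gamma\, N_d(v,u) \;=\; N_d(\Gamma v, u) + N_d(v, \Gamma u) + N_{d'}(v,u)
$$
for a new null tensor (labeled $d'$). Iterating this $|I|$ times and branching at each step into the three possibilities---$\Gamma$ hits $v$, $\Gamma$ hits $u$, or $\Gamma$ produces a commutator correction---generates exactly the sum in the statement, with $d$ counting the number of times the third branch was selected along the way so that $|I_1|+|I_2|+d=|I|$ holds automatically at every leaf.

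For the translations $\Gamma = \del_\mu$, the single-step identity is immediate from Leibniz with no correction at all. For the remaining fields $\Gamma \in \{L_a, \Omega_{12}, L_0\}$, I invoke the elementary commutator relations $[\Gamma, \del_\mu] = c^\nu_\mu(\Gamma)\,\del_\nu$, which have \emph{constant} coefficients---for instance $[L_a, \del_t] = -\del_a$, $[L_a, \del_b] = -\delta_{ab}\del_t$, $[\Omega_{12}, \del_c] = \delta_{2c}\del_1 - \delta_{1c}\del_2$, and $[L_0, \del_\mu] = -\del_\mu$. Applying Leibniz to $\Gamma\bigl(P_d^{\gamma\alpha\beta} \del_\gamma v \,\del_\alpha \del_\beta u\bigr)$ and transporting each commutator past the partials produces, beyond the two ``main'' terms, a correction of the form $\widetilde{P}^{\gamma\alpha\beta} \del_\gamma v\, \del_\alpha \del_\beta u$ where
$$
\widetilde P^{\gamma\alpha\beta} = -\,c^\gamma_{\gamma'}(\Gamma) P_d^{\gamma'\alpha\beta} - c^\alpha_{\alpha'}(\Gamma) P_d^{\gamma\alpha'\beta} - c^\beta_{\beta'}(\Gamma) P_d^{\gamma\alpha\beta'}.
$$

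The only substantive point is to check that $\widetilde P$ is again a null tensor. This is the infinitesimal version of Lorentz (and dilation) invariance of the null condition: $L_a, \Omega_{12}$ span the connected Lorentz group at the identity, and $L_0$ generates dilations, both of which preserve the light cone $\xi_0^2 = \xi_1^2+\xi_2^2$ and hence map null tensors to null tensors. Equivalently, one contracts $\widetilde P^{\gamma\alpha\beta}\xi_\gamma\xi_\alpha\xi_\beta$ against a null covector $\xi$ and uses that the transposed action $c(\Gamma)^\top\xi$ is a constant multiple of $\xi$ for $L_0$ (trivially) and preserves the light cone for $L_a$ and $\Omega_{12}$, reducing the vanishing to the null condition already satisfied by $P_d$. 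The argument contains no analytic difficulty; the sole obstacle is the bookkeeping of indices and commutators in the induction, which is routine.
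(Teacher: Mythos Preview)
Your argument is correct and is precisely the standard proof of this fact. The single-step identity plus induction is exactly how it is done in the references (H\"ormander, Sogge), and your verification that the correction tensor $\widetilde P$ remains null via the infinitesimal Lorentz/dilation invariance of the light cone is the right mechanism. One small point of phrasing: for $L_a$ and $\Omega_{12}$ it is not that $c(\Gamma)^\top\xi$ lies on the light cone but rather that it is \emph{tangent} to it at $\xi$; equivalently, the cubic $Q(\xi)=P_d^{\gamma\alpha\beta}\xi_\gamma\xi_\alpha\xi_\beta$ vanishes along the curve $t\mapsto e^{tc(\Gamma)^\top}\xi$ (which stays null), and differentiating at $t=0$ gives $\widetilde P^{\gamma\alpha\beta}\xi_\gamma\xi_\alpha\xi_\beta=0$.

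As for comparison with the paper: the paper does not supply a proof of this lemma at all---it simply refers the reader to H\"ormander's book. Your sketch is therefore more detailed than what appears in the paper, and it matches the argument one finds in the cited reference.
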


We will also need to frequently use the following estimates for commutators, which can be found in \cite{Sogge, PLF-YM-book}.

\begin{lemma} \label{lem:comm2}
Let $u$ be a sufficiently nice function supported in $\Kcal = \{ (t, x) : t \geq |x| + 1\}$, then the following inequalities are valid ($a, b, c \in \{1, 2\}, \alpha, \beta \in \{0, 1, 2\}$)
\be 
\aligned
\big|\del_\alpha L_a u \big|
&\lesssim
\big| L_a \del_\alpha u \big| + \sum_\beta \big| \del_\beta u \big|,
\\
\big| L_a L_b u \big|
&\lesssim
\big| L_b L_a u \big| + \sum_c \big| L_c u \big|,
\\
\big| L_0 L_a u \big|
&\lesssim
\big| L_a L_0 u \big|,
\\
\big| \del_\alpha (u \,s/t) \big|
&\lesssim
\big| (s/t) \del_\alpha u \big| + s^{-1} |u|
\\
\big| L_\alpha (u \,s/t) \big|
&\lesssim
\big| (s/t) L_\alpha u \big| +  \big| (s/t) u \big|,
%\\
%\big| L_0 (u \,s/t) \big|
%&\lesssim
%\big| (s/t) L_0 u \big|,
\\
\big| L_b L_a (u \,s/t) \big| 
&\lesssim
\big| (s/t) L_b L_a u \big| 
+ \big| (s/t) u \big| + \sum_c \big| (s/t) L_c u \big|.
\endaligned
\ee
\end{lemma}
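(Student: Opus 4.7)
The plan is to verify each of the six inequalities by direct, elementary coordinate calculation. Every estimate reduces either to a commutator identity between two linear vector fields (in which the commutator produces only first-order derivatives, not higher-order ones), or to a Leibniz expansion of $\del_\alpha$ or $L_\alpha$ applied to $u\, s/t$, in which the derivatives of the scalar factor $s/t$ can be computed in closed form. Throughout I use the pointwise bounds $|x| \leq t$ and $s \leq t$, which are automatic on the support cone $\Kcal = \{t \geq |x|+1\}$.

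For the first three estimates I compute the relevant commutators explicitly. Since $L_a = x_a \del_t + t \del_a$ has coefficients that are linear in $(t,x)$, one finds $[\del_t, L_a] = \del_a$ and $[\del_b, L_a] = \delta_{ab}\del_t$, so $\del_\alpha L_a u = L_a \del_\alpha u + \del_\beta u$ for some index $\beta$, giving the first bound. A direct Leibniz expansion yields $[L_a, L_b] = x_a \del_b - x_b \del_a = \Omega_{ab}$, and the identity $\Omega_{ab} = t^{-1}(x_a L_b - x_b L_a)$ (already used in Lemma~\ref{lem:L2type}) together with $|x_c|/t \leq 1$ inside $\Kcal$ yields $|\Omega_{ab} u| \lesssim \sum_c |L_c u|$, which is the second bound. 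For the third, the Euler character of $L_0 = x^\alpha \del_\alpha$ together with the degree-one homogeneity of the coefficients of $L_a$ implies $[L_0, L_a] = 0$ as an identity of operators, so the inequality is trivial.

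For the weighted estimates I compute the derivatives of $s/t$ directly. From $\del_t s = t/s$ and $\del_a s = -x_a/s$ I obtain $\del_t(s/t) = |x|^2/(s t^2)$ and $\del_a(s/t) = -x_a/(s t)$, hence $|\del_\alpha(s/t)| \lesssim s^{-1}$, and the fourth bound follows by Leibniz. In the same way $L_a(s/t) = -x_a s/t^2$ and $L_0(s/t) = 0$, so $|L_\alpha(s/t)| \lesssim s/t$, which gives the fifth bound. For the last estimate, one further short calculation yields $L_b L_a(s/t) = -\delta_{ab}\, s/t + 2 x_a x_b\, s/t^3$, again $\lesssim s/t$ inside $\Kcal$; a double Leibniz expansion of $L_b L_a(u\, s/t)$ combined with the second commutator bound (to exchange $L_b$ and $L_a$ on the $u$ factor whenever needed) then concludes.

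Since every step is algebraic, I do not anticipate any genuine obstacle. The main care required is bookkeeping inside the nested Leibniz expansions: on each term one must match the derivative that falls on $s/t$ with the correct weight ($s^{-1}$ or $s/t$), and when commutator terms such as $\Omega_{ab}$ arise they must be rewritten in terms of admissible boosts through the factor $|x|/t \leq 1$ available in $\Kcal$. It is precisely the restriction to the support cone that converts the exact operator identities into the stated pointwise inequalities.
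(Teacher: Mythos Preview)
Your proof is correct. The paper itself does not supply a proof of this lemma; it merely cites \cite{Sogge, PLF-YM-book}. Your direct verification---computing the commutators $[\del_\alpha, L_a]$, $[L_a, L_b] = \Omega_{ab}$, $[L_0, L_a] = 0$ and the exact formulas $\del_\alpha(s/t)$, $L_a(s/t) = -x_a s/t^2$, $L_b L_a(s/t) = -\delta_{ab}\, s/t + 2x_a x_b\, s/t^3$---is exactly the elementary calculation one would find in those references, and all your intermediate identities check out.

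One small remark: in the sixth estimate you mention invoking the second commutator bound ``to exchange $L_b$ and $L_a$ on the $u$ factor whenever needed,'' but this is unnecessary. The double Leibniz expansion already produces $(s/t)\, L_b L_a u$ on the right with the same ordering as on the left, so no commutation step is required there.
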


\subsection{Sobolev--type inequality}

In order to obtain wave decay, we need the following Sobolev-type inequality, whose proof can be found in \cite{PLF-YM-book}. The rotation vector field $\Omega_{12}$ is not needed in the inequality \eqref{eq:Sobolev2}, which can be seen from the fact that it can be bounded by the Lorentz boosts within the domain of interest $\{ (t, x) : t \geq |x| + 1 \}$, which reads
$$
\Omega_{12} = t^{-1} \big(x_1 L_2 - x_2 L_1\big).
$$ 

\begin{lemma} \label{lem:sobolev}
Let $u= u(t, x)$ be a sufficiently smooth function with support $\{(t, x): t \geq |x| + 1\}$, then for all  $s \geq 2$, one has 
\bel{eq:Sobolev2}
\sup_{\Hcal_s} \big| t\, u(t, x) \big|  \lesssim \sum_{| J |\leq 2} \| L^J u \|_{L^2_f(\Hcal_s)},
\ee
and the symbol $L$ above represents the Lorentz boosts with $J$ a multi-index. 
\end{lemma}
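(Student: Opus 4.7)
\emph{Proof plan.} I would reduce the estimate to the classical two--dimensional Sobolev embedding $H^2 \hookrightarrow L^\infty$ applied on a ball whose radius is tuned to the hyperboloidal geometry, exploiting that tangential derivatives on $\Hcal_s$ are precisely Lorentz boosts divided by~$t$.

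First, I would parametrize $\Hcal_s$ by $y \in \RR^2$ via $y \mapsto (\sqrt{s^2+|y|^2}, y)$ and set $\tilde u(y) := u(\sqrt{s^2+|y|^2}, y)$, so that $\|\tilde u\|_{L^2(\RR^2, dy)} = \|u\|_{L^2_f(\Hcal_s)}$. A direct chain--rule computation yields the key identity
$$
\partial_{y_a} \tilde u(y) = \frac{1}{t(y)}\,(L_a u)\bigl(\sqrt{s^2+|y|^2},\, y\bigr),
$$
and iterating gives $|\partial^2_{yy}\tilde u| \lesssim t^{-2}\bigl(|L^2 u| + |Lu|\bigr)$ on $\Hcal_s$. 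In this last step, the commutator $[L_b, L_a]u = -\Omega_{12} u$ that arises in second order is absorbed into boost terms via the identity $\Omega_{12} = t^{-1}(x_1 L_2 - x_2 L_1)$, which is valid inside $\Kcal$ because $|x_a| \le t$; this is precisely why the rotation does not appear on the right of \eqref{eq:Sobolev2}.

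Next, for each reference point $(t_0, x_0) \in \Hcal_s$, I would invoke the scale--dependent two--dimensional Sobolev inequality
$$
|\tilde u(x_0)|^2 \lesssim r^{-2}\|\tilde u\|_{L^2(B_r(x_0))}^2 + \|\partial \tilde u\|_{L^2(B_r(x_0))}^2 + r^2 \|\partial^2 \tilde u\|_{L^2(B_r(x_0))}^2
$$
for a radius $r \asymp t_0$ chosen so that $t(y) \asymp t_0$ holds uniformly on $B_r(x_0)$. This forces a two--case split: when $|x_0| \geq t_0/2$ the choice $r = t_0/4$ keeps the ball away from the origin and yields $|y| \asymp t_0$, whereas when $|x_0| < t_0/2$ the identity $t_0^2 = s^2 + |x_0|^2$ forces $t_0 \asymp s$ so that $r = s \asymp t_0$ works.

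Inserting the derivative identities from the first paragraph and using $t \asymp t_0$ on $B_r(x_0)$, each of the three terms is bounded by $t_0^{-2}\sum_{|J| \leq 2} \|L^J u\|_{L^2_f(\Hcal_s)}^2$; multiplying by $t_0^2$ and taking the supremum over $\Hcal_s$ delivers \eqref{eq:Sobolev2}. The main technical point is the scale--selection step: a uniform choice such as $r = 1$ or $r = s$ would generate an unwanted factor of $(t_0/s)^2$ in the regime $|x_0| \gg s$, so the case analysis tied to the location of $x_0$ relative to the "pole'' of the hyperboloid is essential.
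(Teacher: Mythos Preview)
The paper does not supply its own proof of this lemma but simply refers to \cite{PLF-YM-book}; your argument is correct and is essentially the one found there --- parametrize $\Hcal_s$ by $x\in\RR^2$, identify the tangential derivatives with $t^{-1}L_a$, and apply a rescaled local $H^2\hookrightarrow L^\infty$ embedding on a ball of radius comparable to $t_0$, with the near/far case split you describe. One small clarification: the chain rule already gives $\partial_{y_b}\partial_{y_a}\tilde u = -x_b\,t^{-3}(L_a u)^\sim + t^{-2}(L_bL_a u)^\sim$ directly, so no commutator actually appears; the rotation-to-boost identity is only needed if one insists on reading the multi-index $J$ as imposing a fixed ordering $L_1^{j_1}L_2^{j_2}$ rather than allowing arbitrary products of at most two boosts.
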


Together with the commutator estimates in Lemma \ref{lem:comm2}, the following inequality, which can be more conveniently applied, holds
\bel{eq:Sobolev-wave}
\sup_{\Hcal_s} \big| s \hskip0.03cm u(t, x) \big|  \lesssim \sum_{| J |\leq 2} \| (s/t) L^J u \|_{L^2_f(\Hcal_s)},
\ee
with $s \geq 2$.

%----------------------------------------------------------------------------------------------------------------------- 

\subsection{Technical identities and inequalities}\label{subsect:useful}

We prepare some calculations for later use, which will play an important role in the analysis.

\paragraph{Step I.}

\begin{lemma}
Within the cone $\Kcal = \{ (t, x) : t \geq |x| + 1 \}$, we have
\be
\aligned
\del_s \Psi^0_\alpha = -{1\over s} \Psi^0_\alpha + \delta_{0\alpha} {1\over t},
\qquad
|\overline{\del}_a \Psi^0_\alpha | \lesssim {1\over s}.
\endaligned
\ee
\end{lemma}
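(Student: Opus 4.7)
The plan is to prove both statements by direct computation, using the explicit entries of the transition matrix together with the hyperboloidal coordinates $(s, x^1, x^2)$, in which $t = \sqrt{s^2 + |x|^2}$, so that $\overline{\del}_0 = \del_s|_x$ and $\overline{\del}_a = \del_{x^a}|_s$.

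First I would record, from the definition of $(\Psi^\beta_\alpha)$, the three coefficients that enter the lemma:
$$
\Psi^0_0 = t/s, \qquad \Psi^0_1 = -x_1/s, \qquad \Psi^0_2 = -x_2/s.
$$
In hyperboloidal coordinates only $t$ depends on $s$, with $\del_s t = s/t$.

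For the first identity, a one-line computation yields, when $\alpha = 0$,
$$
\del_s \Psi^0_0 = \del_s(t/s) = \frac{1}{s}\cdot \frac{s}{t} - \frac{t}{s^2} = \frac{1}{t} - \frac{1}{s}\Psi^0_0,
$$
which is the claim with $\delta_{00} = 1$. When $\alpha = a \in \{1,2\}$, since $x_a$ is independent of $s$,
$$
\del_s \Psi^0_a = \del_s(-x_a/s) = \frac{x_a}{s^2} = -\frac{1}{s}\Psi^0_a,
$$
matching the claim with $\delta_{0a} = 0$. This handles the first formula.

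For the second identity, I use $\del_{x^a}|_s t = x_a/t$ and $\del_{x^a}|_s x_b = \delta_{ab}$ to compute
$$
\overline{\del}_a \Psi^0_0 = \frac{1}{s}\cdot\frac{x_a}{t}, \qquad \overline{\del}_a \Psi^0_b = -\frac{\delta_{ab}}{s}.
$$
Since within $\Kcal$ we have $|x_a| \leq |x| < t$, both quantities are bounded in absolute value by $1/s$, giving $|\overline{\del}_a \Psi^0_\alpha| \lesssim 1/s$.

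This argument is entirely computational and I do not anticipate any obstacle; the only point requiring care is not confusing the two coordinate systems $(t, x)$ and $(s, x)$ when taking partial derivatives, which is resolved once one fixes $\overline{\del}_0 = \del_s|_x$ and $\overline{\del}_a = \del_{x^a}|_s$ at the outset.
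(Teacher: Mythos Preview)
Your proof is correct and is essentially identical to the paper's own argument: the paper also just records the four derivatives $\del_s(t/s)$, $\del_s(-x_a/s)$, $\overline{\del}_a(t/s)$, $\overline{\del}_a(-x_b/s)$ by direct computation using $t=\sqrt{s^2+|x|^2}$. Your write-up is in fact more explicit than the paper's, spelling out how each computed derivative matches the claimed formula and why the bound $|x_a|<t$ inside $\Kcal$ gives the $1/s$ estimate.
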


\begin{proof}

We note
$$
\aligned
\del_s (t/s) = -{t\over s^2} + {1\over t},
\qquad
\del_s (-x_a/s) = {x_a\over s^2},
\\
\overline{\del}_a (t/s) = {1\over s} {x_a \over t},
\qquad
\overline{\del}_a (-x_b/s) = - {\delta_{ab} \over s}.
\endaligned
$$

\end{proof}

\paragraph{Step II.}

\begin{lemma}
Let $w$ be supported in the cone $\Kcal$, satisfying $|\del_s w| \lesssim t^{-1}, |\overline{\del}_a w| \lesssim t^{-1} s^{-1+\delta}$, then it holds
\be
\big| P^{\gamma \alpha\beta} \del_\gamma w \Psi^0_\alpha \Psi^0_\beta \big|
\lesssim
s^{-1+\delta}.
\ee
In addition, we have
\be 
\aligned
&\big| P^{\gamma \alpha\beta} \Psi^0_\gamma \Psi^0_\alpha \Psi^0_\beta \big|
\lesssim {t\over s},
\\
&\big|\del_s \big( P^{\gamma \alpha\beta} \Psi^0_\gamma \Psi^0_\alpha \Psi^0_\beta \big) \big|
\lesssim
{t \over s^2},
\\
&\big|\overline{\del}_a \big( P^{\gamma \alpha\beta} \Psi^0_\gamma \Psi^0_\alpha \Psi^0_\beta \big) \big|
\lesssim
s^{-1}.
\endaligned
\ee
\end{lemma}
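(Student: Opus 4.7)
The plan is to reduce all three estimates to a single identification that makes the null structure explicit. First I would verify the identity
\[
m^{\alpha\beta}\Psi^0_\alpha \Psi^0_\beta = -\frac{t^2}{s^2} + \frac{|x|^2}{s^2} = -1,
\]
which comes directly from the explicit form of $\Psi^0_\alpha$ together with $s^2 = t^2 - |x|^2$. Next I would invoke the classical representation of totally symmetric null cubic forms in $\RR^{1+2}$: any such form $\widetilde{P}^{\gamma\alpha\beta}$ may be written as
\[
\widetilde{P}^{\gamma\alpha\beta} = R^\gamma m^{\alpha\beta} + R^\alpha m^{\gamma\beta} + R^\beta m^{\gamma\alpha}
\]
for some constant vector $R^\gamma$. (The space of such null cubic forms is three-dimensional, exactly matching the three components of $R$, and the formula is verified by testing $P^{\gamma\alpha\beta}\xi_\gamma\xi_\alpha\xi_\beta = 0$ against $\nu_\alpha = (1, -\omega_a)$ with $\omega \in S^1$.) Since the cubic contraction $P^{\gamma\alpha\beta}\Psi^0_\gamma\Psi^0_\alpha\Psi^0_\beta$ depends only on the total symmetrization of $P^{\gamma\alpha\beta}$, these two facts combine into the key simplification
\[
Q := P^{\gamma\alpha\beta}\Psi^0_\gamma \Psi^0_\alpha \Psi^0_\beta = 3 (R^\gamma \Psi^0_\gamma)(m^{\alpha\beta}\Psi^0_\alpha\Psi^0_\beta) = -3\, R^\gamma \Psi^0_\gamma,
\]
so that $Q$ is in fact \emph{linear}, not cubic, in $\Psi^0$.

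From this the three pointwise bounds follow with no further effort. The bound $|\Psi^0_\gamma| \leq t/s$ immediately gives $|Q| \lesssim t/s$. For the derivatives, the Step~I formulas $\del_s \Psi^0_\alpha = -\Psi^0_\alpha/s + \delta_{0\alpha}/t$ and $|\overline{\del}_a\Psi^0_\alpha| \lesssim 1/s$ yield
\[
|\del_s Q| = 3\,|R^\gamma(-\Psi^0_\gamma/s + \delta_{0\gamma}/t)| \lesssim \frac{|\Psi^0|}{s} + \frac{1}{t} \lesssim \frac{t}{s^2},
\]
using $t \geq s$ so that $1/t \leq t/s^2$, and similarly $|\overline{\del}_a Q| = 3|R^\gamma \overline{\del}_a \Psi^0_\gamma| \lesssim 1/s$. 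For the first inequality, I would decompose $\del_\gamma w = \Psi^0_\gamma \del_s w + \delta^a_\gamma\, \overline{\del}_a w$ according to the transition matrix, which yields
\[
P^{\gamma\alpha\beta}\del_\gamma w\, \Psi^0_\alpha \Psi^0_\beta = Q\,\del_s w + P^{a\alpha\beta}\Psi^0_\alpha \Psi^0_\beta\,\overline{\del}_a w.
\]
The first piece is bounded by $(t/s)\cdot t^{-1} = 1/s \leq s^{-1+\delta}$. The second piece uses no null structure at all: $|P^{a\alpha\beta}\Psi^0_\alpha\Psi^0_\beta| \lesssim (t/s)^2$ combined with $|\overline{\del}_a w| \lesssim t^{-1}s^{-1+\delta}$ gives $\lesssim t\, s^{-3+\delta}$, which within $\Kcal$ (where the support condition $t - |x| \geq 1$ forces $s^2 \geq t$) is bounded by $s^{-1+\delta}$.

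The hard part will be the estimate on $\overline{\del}_a Q$: a naive Leibniz expansion only produces the bound $|\overline{\del}_a \Psi^0|\cdot |\Psi^0|^2 \lesssim t^2/s^3$, which within $\Kcal$ is as bad as $s$, not $1/s$. It is precisely the reduction of $Q$ from cubic to linear via the null form representation above that saves this estimate and yields the required bound $1/s$.
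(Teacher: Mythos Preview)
Your proof is correct and takes a genuinely different route from the paper. The paper proceeds by an explicit subtraction trick: it notices that the vector $(r/s,\,-x_a/s)$ is null, subtracts the cubic expression $P^{\gamma\alpha\beta}$ contracted against this null vector (which vanishes) from $Q = P^{\gamma\alpha\beta}\Psi^0_\gamma\Psi^0_\alpha\Psi^0_\beta$, and then expands the difference term by term using the identities $(\Psi^0_0)^2 - r^2/s^2 = 1$ and $|(\Psi^0_0)^k - (r/s)^k| \lesssim (t-r)\,t^{k-1}/s^k$. This yields an explicit formula for $Q$ (the paper's equation \eqref{eq:P-null}) as a sum of pieces each of size $t/s$, and the derivative bounds are obtained by differentiating that formula directly. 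Your approach instead invokes the structural fact that any totally symmetric null cubic form in $\RR^{1+2}$ is a symmetrized product $R \otimes m$, together with the identity $m^{\alpha\beta}\Psi^0_\alpha\Psi^0_\beta = -1$; this collapses $Q$ to the \emph{linear} expression $-3R^\gamma\Psi^0_\gamma$, after which all three pointwise bounds and both derivative bounds are immediate from the Step~I formulas. The paper's argument is more hands-on and requires no outside input; yours is shorter, explains conceptually why the ``naive'' cubic bound $t^2/s^3$ on $|\overline{\del}_a Q|$ is illusory, and generalizes cleanly (the factorization of a homogeneous polynomial vanishing on the null cone by $m^{\alpha\beta}\xi_\alpha\xi_\beta$ holds in every dimension). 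Both reach the same conclusions by equally valid means.
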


\begin{proof}

We find
$$
\aligned
P^{\gamma \alpha\beta} \del_\gamma w \Psi^0_\alpha \Psi^0_\beta
=
P^{\gamma \alpha\beta} \Psi^\eta_\gamma \Psi^0_\alpha \Psi^0_\beta \overline{\del}_\eta w
&=
P^{\gamma \alpha\beta} \Psi^0_\gamma \Psi^0_\alpha \Psi^0_\beta \overline{\del}_s w
+
P^{\gamma \alpha\beta} \Psi^a_\gamma \Psi^0_\alpha \Psi^0_\beta \overline{\del}_a w
\\
&=
P^{\gamma \alpha\beta} \Psi^0_\gamma \Psi^0_\alpha \Psi^0_\beta \overline{\del}_s w
+
P^{a \alpha\beta}  \Psi^0_\alpha \Psi^0_\beta \overline{\del}_a w.
\endaligned
$$
We have
$$
\aligned
P^{\gamma \alpha\beta} \Psi^0_\gamma \Psi^0_\alpha \Psi^0_\beta
&=
P^{000} \Psi^0_0 \Psi^0_0 \Psi^0_0
+
P^{00a} \Psi^0_0 \Psi^0_0 \Psi^0_a
+
P^{0a0} \Psi^0_0 \Psi^0_a \Psi^0_0
+
P^{0ab} \Psi^0_0 \Psi^0_a \Psi^0_b
\\
&+
P^{a00} \Psi^0_a \Psi^0_0 \Psi^0_0
+
P^{a0b} \Psi^0_a \Psi^0_0 \Psi^0_b
+
P^{ab0} \Psi^0_a \Psi^0_b \Psi^0_0
+
P^{abc} \Psi^0_a \Psi^0_b \Psi^0_c
\\
&=
P^{000} {r^3\over s^3} + P^{000} \big( (\Psi^0_0)^3 - {r^3\over s^3} \big)
+
P^{00a} {r^2\over s^2} \Psi^0_a + P^{00a} \big( (\Psi^0_0)^2 - {r^2\over s^2} \big) \Psi^0_a
\\
&+
P^{0a0} {r^2\over s^2} \Psi^0_a + P^{0a0} \big( (\Psi^0_0)^2 - {r^2\over s^2} \big) \Psi^0_a
+
P^{0ab} {r\over s} \Psi^0_a \Psi^0_b + P^{0ab} \big(\Psi^0_0 - {r\over s} \big) \Psi^0_a \Psi^0_b
\\
&+
P^{a00} {r^2 \over s^2} \Psi^0_a  + P^{a00} \big( (\Psi^0_0)^2 - {r^2\over s^2} \big) \Psi^0_a
+
P^{a0b} {r\over s}  \Psi^0_a  \Psi^0_b + P^{a0b} \big(\Psi^0_0 - {r\over s} \big) \Psi^0_a \Psi^0_b
\\
&+
P^{ab0} {r\over s}  \Psi^0_a  \Psi^0_b + P^{ab0} \big(\Psi^0_0 - {r\over s} \big) \Psi^0_a \Psi^0_b
+
P^{abc} \Psi^0_a \Psi^0_b \Psi^0_c,
\endaligned
$$
and since $P^{\gamma \alpha\beta}$ is null, we get
\bel{eq:P-null}
\aligned
P^{\gamma \alpha\beta} \Psi^0_\gamma \Psi^0_\alpha \Psi^0_\beta
&=
P^{000} \big( (\Psi^0_0)^3 - {r^3\over s^3} \big)
+
P^{00a}  \Psi^0_a
+
P^{0a0}  \Psi^0_a
+
P^{0ab} \big(\Psi^0_0 - {r\over s} \big) \Psi^0_a \Psi^0_b
\\
&+
P^{a00}  \Psi^0_a
+
P^{a0b} \big(\Psi^0_0 - {r\over s} \big) \Psi^0_a \Psi^0_b
+
P^{ab0} \big(\Psi^0_0 - {r\over s} \big) \Psi^0_a \Psi^0_b.
\endaligned
\ee
Recall that
$$
\Psi^0_\alpha = - {x_\alpha \over s},
$$
then using the fact that $P^{\gamma \alpha\beta}$ is null, we get
$$
\big| P^{\gamma \alpha\beta} \Psi^0_\gamma \Psi^0_\alpha \Psi^0_\beta \big|
\lesssim {t\over s}.
$$
If $|\del_s w| \lesssim t^{-1}, |\overline{\del}_a w| \lesssim t^{-1} s^{-1+\delta}$, then we have
$$
\big| P^{\gamma \alpha\beta} \del_\gamma w \Psi^0_\alpha \Psi^0_\beta \big|
\lesssim
s^{-1+\delta}.
$$
Furthermore, we act $\overline{\del}_\alpha$ on \eqref{eq:P-null} to get
$$
\big|\del_s \big( P^{\gamma \alpha\beta} \Psi^0_\gamma \Psi^0_\alpha \Psi^0_\beta \big) \big|
\lesssim
{t \over s^2},
\qquad
\big|\overline{\del}_a \big( P^{\gamma \alpha\beta} \Psi^0_\gamma \Psi^0_\alpha \Psi^0_\beta \big) \big|
\lesssim
s^{-1}.
$$
\end{proof}

\paragraph{Step III.}

Similar to the last lemma, we also have the following estimates.

\begin{lemma}
With $\Phi = \big( 1, \pm x_1/t, \pm x_2/t \big)$ one has 
$$
\aligned
|P^{\gamma \alpha\beta} \Phi_\gamma \Phi_\alpha \Phi_\beta |
\lesssim
{s^2 \over t^2}. 
\endaligned
$$ 
\end{lemma}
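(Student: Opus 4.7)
My plan is to mimic the strategy used in Step II above. The key observation is that within $\Kcal$ the vector $\Phi = (1, \pm x_1/t, \pm x_2/t)$ is \emph{almost} null: writing $r = |x|$, we have
$$ \Phi_0^2 - \Phi_1^2 - \Phi_2^2 \;=\; 1 - \frac{r^2}{t^2} \;=\; \frac{s^2}{t^2}. $$
This suggests introducing the genuinely null vector $\xi := (r/t, \pm x_1/t, \pm x_2/t)$, which satisfies $\xi_0^2 = r^2/t^2 = \xi_1^2 + \xi_2^2$; by the null condition on $P^{\gamma\alpha\beta}$ we therefore have $P^{\gamma\alpha\beta}\xi_\gamma\xi_\alpha\xi_\beta = 0$. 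Note that $\Phi$ and $\xi$ share their spatial components and differ only in the time component, with
$$ \Phi_0 - \xi_0 \;=\; 1 - \frac{r}{t} \;=\; \frac{t-r}{t} \;=\; \frac{s^2}{t(t+r)} \;\lesssim\; \frac{s^2}{t^2}. $$

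Next, I would expand $P^{\gamma\alpha\beta}\Phi_\gamma\Phi_\alpha\Phi_\beta$ by splitting each summation index into the value $0$ and the spatial values $\{1,2\}$, grouping the eight resulting terms by the number of time indices. Subtracting the vanishing identity $P^{\gamma\alpha\beta}\xi_\gamma\xi_\alpha\xi_\beta = 0$ expanded in the same way, the purely spatial term $P^{abc}\Phi_a\Phi_b\Phi_c$ cancels (since $\Phi_a = \xi_a$ for $a=1,2$), and each remaining group gets multiplied by a telescoping factor $1 - r^k/t^k$:
$$
P^{\gamma\alpha\beta}\Phi_\gamma\Phi_\alpha\Phi_\beta
= P^{000}\bigl(1 - \tfrac{r^3}{t^3}\bigr)
+ \bigl(P^{00a}+P^{0a0}+P^{a00}\bigr)\bigl(1 - \tfrac{r^2}{t^2}\bigr)\Phi_a
+ \bigl(P^{0ab}+P^{a0b}+P^{ab0}\bigr)\bigl(1 - \tfrac{r}{t}\bigr)\Phi_a\Phi_b.
$$

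Finally, I would control each factor by factorizing $1 - r^k/t^k = (1 - r/t)(1 + r/t + \cdots + r^{k-1}/t^{k-1})$ and using $r/t \leq 1$ inside $\Kcal$, so each factor is bounded by $3(1 - r/t) \lesssim s^2/t^2$ via the inequality derived in the first paragraph. Since $|\Phi_a| = |x_a|/t \leq 1$ and $P^{\gamma\alpha\beta}$ is a constant tensor, the desired bound $|P^{\gamma\alpha\beta}\Phi_\gamma\Phi_\alpha\Phi_\beta| \lesssim s^2/t^2$ follows at once. There is no substantial obstacle: the argument is essentially an index-bookkeeping exercise strictly parallel to the computation of $P^{\gamma\alpha\beta}\Psi^0_\gamma\Psi^0_\alpha\Psi^0_\beta$ performed just above, with the role of the ``null reference vector'' played by $\xi$ in place of $(r/s, -x_1/s, -x_2/s)$.
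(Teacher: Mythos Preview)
Your proposal is correct and is precisely the argument the paper has in mind: the paper provides no explicit proof, stating only ``Similar to the last lemma, we also have the following estimates,'' and your computation is exactly that similarity spelled out. The subtraction of the null identity with $\xi = (r/t, \pm x_1/t, \pm x_2/t)$ plays the same role as the subtraction of the $(r/s)$-terms in the Step~II computation of $P^{\gamma\alpha\beta}\Psi^0_\gamma\Psi^0_\alpha\Psi^0_\beta$, and the remaining bounds via $1 - r^k/t^k \lesssim s^2/t^2$ are straightforward.
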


\paragraph{Step IV.}

\begin{lemma}
Consider the quasilinear wave equation
$$
-\Box u + P^{\gamma \alpha\beta} \del_\gamma w \del_\alpha\del_\beta u = h,
$$
and assume $|\del w| \leq {1\over 100} s^{-1}, |\underdel_a w| \leq {1\over 100} t^{-1} s^{-1+\delta}$,
then we have
\bel{eq:quasi-EE} 
\aligned
\aligned
E (u, s)
&\lesssim
E (u, s_0)
+
\Big|\int_{s_0}^s \int_{\Hcal_\tau^*} 2 (\tau/t) P^{\gamma \alpha \beta} \del_\gamma \del_\alpha w \del_\beta u \del_t u 
\\
&\hskip2.3cm
- (\tau/t) P^{\gamma \alpha\beta} \del_t \del_\gamma w \del_\alpha u \del_\beta u 
+ 2 (\tau/t)  h \del_t u  \, dxd\tau \Big|.
\endaligned
\endaligned
\ee
\end{lemma}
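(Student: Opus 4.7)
The plan is to derive a divergence identity by multiplying the equation by $2\del_t u$ and then integrate it over the hyperboloidal slab $\Omega_{[s_0,s]} = \{(t,x)\in \Kcal : s_0 \leq \sqrt{t^2-|x|^2}\leq s\}$. Writing $g^{\alpha\beta}= -m^{\alpha\beta}+ P^{\gamma\alpha\beta}\del_\gamma w$ (symmetric in $\alpha, \beta$), the equation reads $g^{\alpha\beta}\del_\alpha\del_\beta u = h$. Multiplying by $2\del_t u$, using the $(\alpha,\beta)$-symmetry of $g^{\alpha\beta}$ and the identity $2g^{\alpha\beta}\del_\beta u \del_\alpha\del_t u = \del_t(g^{\alpha\beta}\del_\alpha u \del_\beta u) - \del_t g^{\alpha\beta}\del_\alpha u \del_\beta u$, I obtain
$$
\del_t\bigl[2g^{0\beta}\del_\beta u\,\del_t u - g^{\alpha\beta}\del_\alpha u\,\del_\beta u\bigr] + \del_a\bigl[2g^{a\beta}\del_\beta u\,\del_t u\bigr] = 2h\del_t u + 2\del_\alpha g^{\alpha\beta}\del_\beta u\,\del_t u - \del_t g^{\alpha\beta}\del_\alpha u\,\del_\beta u.
$$
Integrating over $\Omega_{[s_0,s]}$, the boundary on the cone $\{t=|x|+1\}$ contributes nothing thanks to the support assumption, and the change of variables $t=\sqrt{\tau^2+|x|^2}$ converts $dt\,dx = (\tau/t)\,d\tau\,dx$. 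Noting that $\del_\alpha g^{\alpha\beta} = P^{\gamma\alpha\beta}\del_\alpha\del_\gamma w$ and $\del_t g^{\alpha\beta} = P^{\gamma\alpha\beta}\del_t\del_\gamma w$ (since $m$ is constant), the source terms on the right of \eqref{eq:quasi-EE} appear verbatim.

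On the left, the flux through $\Hcal_\sigma$ is $\int_{\Hcal_\sigma^*}(T^0-(x^a/t)T^a)\,dx$, where $T^0, T^a$ denote the bracketed expressions. A direct expansion shows that the flat part ($g$ replaced by $-m$) reproduces exactly the $E$-density, so
$$
\int_{\Hcal_\sigma^*}(T^0 - (x^a/t) T^a)\,dx \;=\; E(u,\sigma) \;+\; R(\sigma),
$$
where the quasilinear remainder $R(\sigma)$ collects the contribution of $M^{\alpha\beta}:=P^{\gamma\alpha\beta}\del_\gamma w$. Using the semi-hyperboloidal decomposition $\del_\alpha u = \Phi_\alpha\del_t u + \delta^a_\alpha\, \underdel_a u$ with $\Phi_\alpha = (1,-x^a/t)$, the cross terms $M^{\alpha\beta}\Phi_\alpha\underdel_b u\,\del_t u$ cancel by the symmetry of $M$, and a short calculation simplifies the remainder density to
$$
R_{\mathrm{density}} = \tfrac{1}{2}(P^{\gamma\alpha\beta}\Phi_\alpha\Phi_\beta\del_\gamma w)(\del_t u)^2 - \tfrac{1}{2}P^{\gamma ab}\del_\gamma w\,\underdel_a u\,\underdel_b u.
$$

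The crux is to show $|R(\sigma)|\lesssim \eta\, E(u,\sigma)$ with $\eta$ small, so that $R$ can be absorbed into the LHS and yield $E(u,s)\lesssim E(u,s_0) + |\!\int\! \text{sources}|$. The second summand is straightforward: $|M^{ab}|\lesssim |\del w|\lesssim s^{-1}$ and $\|\underdel_a u\|_{L^2_f(\Hcal_s)}^2\leq E(u,s)$ give a bound $\lesssim s^{-1}E(u,s)$. The first summand is the delicate one: writing $\del_\gamma w = \Phi_\gamma\underdel_0 w + \sum_c \delta^c_\gamma\underdel_c w$ produces the fully contracted trilinear null form $P^{\gamma\alpha\beta}\Phi_\gamma\Phi_\alpha\Phi_\beta$ in front of $\underdel_0 w (\del_t u)^2$, and by Step III of subsection \ref{subsect:useful} this contraction is $O((s/t)^2)$. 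Combined with $|\underdel_0 w|\lesssim s^{-1}$, $|\underdel_c w|\lesssim t^{-1}s^{-1+\delta}$ and $t\leq s^2$, this yields an integrand bounded by $[s^{-1}+s^{-1+\delta}]\,((s/t)\del_t u)^2$, hence an $L^1$-norm $\lesssim s^{-1+\delta} E(u,s)$. For $s\geq s_0=2$, the smallness factor $\frac{1}{100}$ in the hypotheses on $\del w, \underdel_a w$ makes the total constant in $|R(\sigma)|\leq \eta E(u,\sigma)$ strictly less than $\tfrac12$, permitting the desired absorption.

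\textbf{Main obstacle.} The nontrivial analytic step is precisely the absorption of $R(\sigma)$: the naive bound $|R(\sigma)|\lesssim |\del w|\int|\del u|^2 dx$ fails because $E(u,s)$ only controls $\|(s/t)\del_t u\|_{L^2_f}^2$ and not $\|\del_t u\|_{L^2_f}^2$, and $(t/s)^2$ can grow like $s^2$ inside $\Kcal$. The null condition rescues the argument: the algebraic cancellation forces the coefficient of $(\del_t u)^2$ in $R_{\mathrm{density}}$ to be the fully contracted form $P^{\gamma\alpha\beta}\Phi_\gamma\Phi_\alpha\Phi_\beta$ — small precisely because $\Phi$ is asymptotically null — providing the decaying $(s/t)^2$ factor that converts the bad $(\del_t u)^2$ into the good $((s/t)\del_t u)^2$ and closes the estimate.
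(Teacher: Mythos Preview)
Your proposal is correct and follows essentially the same route as the paper: multiply by $\del_t u$, derive the divergence identity, integrate over the hyperboloidal slab, and absorb the quasilinear boundary remainder into $E(u,s)$ using the null structure; the paper compresses the absorption into a single sentence, whereas you spell out the decomposition $R_{\mathrm{density}} = M^{\alpha\beta}\Phi_\alpha\Phi_\beta(\del_t u)^2 - M^{ab}\underdel_a u\,\underdel_b u$ (your $\tfrac12$'s should be $1$'s with multiplier $2\del_t u$, a harmless slip) and invoke Step~III explicitly, which is exactly the content behind the paper's one-line claim.
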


\begin{proof}

Multiplying the equation with $\del_t u$, we have
$$
\aligned
&{1\over 2} \del_t \big( (\del_t u)^2 + \sum_a (\del_a u)^2 \big)
- \del_a \big( \del^a u \del_t u \big)
+ P^{\gamma \alpha \beta} \del_\alpha \big( \del_\gamma w \del_\beta u \del_t u \big)
- {1\over 2} P^{\gamma \alpha\beta} \del_t \big( \del_\gamma w \del_\alpha u \del_\beta u \big)
\\
=
&P^{\gamma \alpha\beta} \del_\gamma \del_\alpha w \del_\beta u \del_t u
- {1\over 2} P^{\gamma \alpha\beta} \del_t \del_\gamma w \del_\alpha u \del_\beta u
+ h \del_t u.
\endaligned
$$

Thus we have the energy estimates
$$
\aligned
&\int_{\Hcal_s^*} |\overline{\del} u|^2 + 2 P^{\gamma \alpha\beta} \del_\gamma w \del_\beta u \del_t u n_\alpha - P^{\gamma \alpha \beta} \del_\gamma w \del_\alpha u \del_\beta u \, dx
\\
-
&\int_{\Hcal_{s_0}^*} |\overline{\del} u|^2 + 2 P^{\gamma \alpha\beta} \del_\gamma w \del_\beta u \del_t u n_\alpha - P^{\gamma \alpha \beta} \del_\gamma w \del_\alpha u \del_\beta u \, dx
\\
=
&\int_{s_0}^s \int_{\Hcal_\tau^*} 2 (\tau/t) P^{\gamma \alpha \beta} \del_\gamma \del_\alpha w \del_\beta u \del_t u \, dxd\tau
-
\int_{s_0}^s \int_{\Hcal_\tau^*} (\tau/t) P^{\gamma \alpha\beta} \del_t \del_\gamma w \del_\alpha u \del_\beta u \, dxd\tau
\\
+
&\int_{s_0}^s \int_{\Hcal_\tau^*} 2 (\tau/t)  h \del_t u  \, dxd\tau,
\endaligned
$$
in which 
$$
n = (1, -x_a/t).
$$
We observe that
$$
n_\alpha = {s\over t} \del_\alpha s.
$$
Since $P^{\gamma \alpha\beta}$ is null, and $|\del w| \leq {1\over 100} s^{-1}, |\underdel_a w| \leq {1\over 100} t^{-1} s^{-1+\delta}$, thus we have
$$
\big| \int_{\Hcal_s^*} 2 P^{\gamma \alpha\beta} \del_\gamma w \del_\beta u \del_t u n_\alpha - P^{\gamma \alpha \beta} \del_\gamma w \del_\alpha u \del_\beta u \, dx \big|
\leq
{1\over 2} \int_{\Hcal_s^*} |\overline{\del} u|^2 \, dx,
$$
which yields the desired estimates.
\end{proof}

%==================================================================================

\section{Proof of the boundedness of the energy} 
\label{sec:proof}

\paragraph{Bootstrap assumptions and direct consequences}

We take the following bootstrap assumptions on $[s_0=2, s_1)$:
\bel{eq:BA} 
\aligned
E (\Gamma^I w, s)^{1/2}
&\leq C_1 \eps,
\qquad
&&|I| \leq N,
\\
E_{con} (\Gamma^I w, s)^{1/2}
&\leq C_1 \eps s^\delta,
\qquad
&&|I| \leq N-1,
\\
E_{con} (\Gamma^I w, s)^{1/2}
&\leq C_1 \eps s^{2 \delta},
\qquad
&&|I| \leq N,
\endaligned
\ee
with $0< \delta \ll 1$, $C_1 \gg 1$ some large constant to be determined later, and $\eps \ll 1$ the size of the initial data, and
\bel{eq:s1}
s_1 := \sup \{ s>s_0 : \eqref{eq:BA}~holds\}.
\ee

\begin{lemma}\label{lem:BA0}
Under the assumptions in \eqref{eq:BA}, for all $s \in [s_0, s_1)$ it is true that
\be
\aligned
\big\| (s/t) \del \Gamma^I w \big\|_{L^2_f(\Hcal_s)} 
%+ s^{-\delta} \big\| (s/t) \del^I L^J w \big\|_{L^2_f(\Hcal_s)} 
+s^{-\delta}  \big\| (s/t) \Gamma \Gamma^I w \big\|_{L^2_f(\Hcal_s)}
&\lesssim C_1 \eps,
\qquad
|I| \leq N-1,
\\
\big\| (s/t) \del \Gamma^I w \big\|_{L^2_f(\Hcal_s)} 
%+ s^{-2 \delta} \big\| (s/t) \del^I L^J w \big\|_{L^2_f(\Hcal_s)} 
+s^{-2 \delta} \big\| (s/t) \Gamma \Gamma^I w \big\|_{L^2_f(\Hcal_s)}
&\lesssim C_1 \eps,
\qquad
|I| \leq N.
\endaligned
\ee
\end{lemma}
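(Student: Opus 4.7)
The plan is a straightforward three-step cascade based on the hyperboloidal energy estimates of Lemma \ref{lem:EE} and the vector-field control of Lemma \ref{lem:L2type}.

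First, bounding $\|(s/t) \del \Gamma^I w\|_{L^2_f(\Hcal_s)}$ is immediate: from the definition of the natural energy $E$ we have the pointwise/integral control $\|(s/t) \del_\alpha u\|_{L^2_f(\Hcal_s)} \lesssim E(u,s)^{1/2}$, so setting $u = \Gamma^I w$ and applying the first line of the bootstrap \eqref{eq:BA} produces the desired $\lesssim C_1 \eps$ estimate for all $|I| \leq N$. Observe that this already covers the case $\Gamma = \del_\alpha$ of the $\|(s/t) \Gamma \Gamma^I w\|_{L^2_f(\Hcal_s)}$ term as well (since $\del \Gamma^I$ is a derivative of an order-$|I|$ admissible string).

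Next, to handle the remaining cases where $\Gamma \in \{L_0, L_a, \Omega_{12}\}$, I would first obtain an $L^2$ bound on $\Gamma^I w$ itself via the $L^2$-type estimate \eqref{eq:E-E3} applied to $u = \Gamma^I w$:
$$
\|(s/t) \Gamma^I w\|_{L^2_f(\Hcal_s)} \leq \|\Gamma^I w\|_{L^2_f(\Hcal_{s_0})} + \int_{s_0}^s \tau^{-1} E_{con}(\Gamma^I w, \tau)^{1/2} \, d\tau.
$$
The initial-data term is $O(\eps)$ thanks to the Sobolev hypothesis on $(w_0, w_1)$ together with the compact-support assumption on $\Hcal_{s_0}^*$. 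Plugging in the bootstrap bounds on $E_{con}$ and integrating then yields $\|(s/t) \Gamma^I w\|_{L^2_f(\Hcal_s)} \lesssim C_1 \eps s^\delta$ for $|I| \leq N-1$ and $\lesssim C_1 \eps s^{2\delta}$ for $|I| \leq N$; the $1/\delta$ (resp.\ $1/(2\delta)$) factor produced by the $\tau$-integration is absorbed into the implicit constant since $\delta$ is fixed.

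Finally, I would invoke Lemma \ref{lem:L2type} with $u = \Gamma^I w$ to convert this into the required control on $\Gamma \Gamma^I w$, giving
$$
\|(s/t) \Gamma \Gamma^I w\|_{L^2_f(\Hcal_s)} \lesssim \|(s/t) \Gamma^I w\|_{L^2_f(\Hcal_s)} + E_{con}(\Gamma^I w, s)^{1/2}.
$$
Both terms on the right are already controlled at the respective rates $s^\delta$ or $s^{2\delta}$, which matches the stated weights $s^{-\delta}$ and $s^{-2\delta}$ in the lemma. No genuine obstacle appears; the only item that needs care is the two-tiered nature of the bootstrap, so that the sharper $s^\delta$ assumption is used for the $|I| \leq N-1$ inequality and the weaker $s^{2\delta}$ assumption for the $|I| \leq N$ inequality.
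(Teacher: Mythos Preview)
Your proposal is correct and follows essentially the same approach as the paper: the paper's one-sentence proof simply points to the definitions of $E$ and $E_{con}$ together with Lemma~\ref{lem:EE} (in particular the $L^2$ estimate \eqref{eq:E-E3}) and Lemma~\ref{lem:L2type}, and your three-step cascade is precisely the unpacking of that citation.
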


\begin{proof}
The proof follows from the definition of the natural energy $E (w, s)$ and the conformal energy $E_{con} (w, s)$ as well as the estimates in Lemma \ref{lem:EE} and Lemma \ref{lem:L2type}.
\end{proof}

The use of the Sobolev inequality in Lemma \ref{lem:sobolev} (more precisely \eqref{eq:Sobolev-wave}) leads us to the following pointwise decay for the solution $w$.

\begin{lemma}\label{lem:BA1}
Let the bootstrap assumptions in \eqref{eq:BA} hold, then for all $s \in [s_0, s_1)$ we have
\bel{eq:decay2}
\aligned
\big| \del \Gamma^I w \big| 
&\lesssim C_1 \eps s^{-1},
\qquad
&|I| \leq N-2,
\\
\sum_a \big| \overline{\del}_a \Gamma^I w \big|  
&\lesssim C_1 \eps t^{-1} s^{-1 + \delta},
\qquad
&|I| \leq N-3,
\\
\sum_a \big| \overline{\del}_a \Gamma^I w \big|  
&\lesssim C_1 \eps t^{-1} s^{-1 + 2 \delta},
\qquad
&|I|  \leq N-2.
\endaligned
\ee
\end{lemma}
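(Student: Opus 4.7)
The plan is to derive each pointwise bound in \eqref{eq:decay2} by applying the weighted Sobolev inequality \eqref{eq:Sobolev-wave} to a suitably chosen quantity, thereby reducing the pointwise control to an $L^2_f$--estimate already furnished by Lemma \ref{lem:BA0}. Throughout, the commutator identities of Lemma \ref{lem:comm2} will be used repeatedly to move Lorentz boosts past $\del$ or past another $\Gamma$ at the price of harmless lower-order terms.

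For the first line of \eqref{eq:decay2}, I would apply \eqref{eq:Sobolev-wave} with $u = \del \Gamma^I w$ to obtain
$$
\sup_{\Hcal_s} \bigl| s\,\del \Gamma^I w \bigr|
\lesssim
\sum_{|J|\leq 2} \bigl\| (s/t) L^J \del \Gamma^I w \bigr\|_{L^2_f(\Hcal_s)}.
$$
Using $|\del_\alpha L_a u| \lesssim |L_a \del_\alpha u| + \sum_\beta |\del_\beta u|$ iteratively, together with $|\del_\alpha(u\,s/t)| \lesssim |(s/t)\del_\alpha u| + s^{-1}|u|$ to pass the weight $s/t$ through the derivatives, the right-hand side is majorised by a finite sum of norms $\|(s/t) \del \Gamma^{I'} w\|_{L^2_f(\Hcal_s)}$ with $|I'| \leq |I| + |J| \leq N$, each of which is $\lesssim C_1 \eps$ by Lemma \ref{lem:BA0}. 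Dividing through by $s$ yields the first line of \eqref{eq:decay2}.

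For the remaining two estimates, I would exploit the identity $\overline{\del}_a = L_a/t$, apply \eqref{eq:Sobolev-wave} to $u = L_a \Gamma^I w$, and divide by $t$, so that
$$
\bigl| \overline{\del}_a \Gamma^I w \bigr|
\lesssim
t^{-1} s^{-1} \sum_{|J|\leq 2} \bigl\| (s/t) L^J L_a \Gamma^I w \bigr\|_{L^2_f(\Hcal_s)}.
$$
The commutator estimates of Lemma \ref{lem:comm2} reduce the right-hand side to norms of the form $\|(s/t) \Gamma \Gamma^{I''} w\|_{L^2_f(\Hcal_s)}$ with $|I''| \leq |I| + |J| \leq |I| + 2$. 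When $|I| \leq N-3$, one has $|I''| \leq N-1$, so the first line of Lemma \ref{lem:BA0} applies and yields the factor $s^\delta$; when $|I| \leq N-2$, the count may reach $|I''| = N$, forcing recourse to the second line of Lemma \ref{lem:BA0}, which only delivers the weaker factor $s^{2\delta}$. In either case, the claimed pointwise bound follows.

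No substantive obstacle is expected in the argument; it is essentially a bookkeeping exercise. The only genuinely delicate point will be to track the highest total number of vector fields that appears after commuting $L^J$ past $\del$ or past $L_a$, so that the sharper first line of Lemma \ref{lem:BA0} (with the $s^\delta$ factor rather than $s^{2\delta}$) is invoked whenever it is legitimately available.
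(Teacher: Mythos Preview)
Your proposal is correct and matches the paper's own approach, which simply states that the estimates follow from the Sobolev inequality \eqref{eq:Sobolev-wave} together with Lemma~\ref{lem:BA0}. The only superfluous remark is the mention of $|\del_\alpha(u\,s/t)|$: since \eqref{eq:Sobolev-wave} already places the weight $(s/t)$ outside $L^J$, no $\del$-commutation with the weight is needed---the relevant commutators are only those of $L^J$ with $\del$ (first line) and of $L^J$ with $L_a$ (second and third lines), exactly as you use them.
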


\begin{proposition}\label{prop:improved1}
Under the assumptions in \eqref{eq:BA}, we have the following improved estimates for all $s \in [s_0, s_1)$
\bel{eq:improved1} 
\aligned
E(\Gamma^I w, s)^{1/2}
&\lesssim \eps + (C_1 \eps)^2,
\qquad
&&& |I|  \leq N-1,
\\
E_{con}(\Gamma^I w, s)^{1/2}
&\lesssim \eps + (C_1 \eps)^2 s^{\delta},
\qquad
&&&
|I|  \leq N-1.
\endaligned
\ee
\end{proposition}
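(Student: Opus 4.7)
The plan is to apply the natural energy estimate \eqref{eq:quasi-EE} from Section \ref{subsect:useful} and the conformal energy identity of Proposition \ref{prop:quasi-conformal} (combined with \eqref{eq:E-E2}) to $u = \Gamma^I w$ with $|I| \leq N-1$, treating the commuted right-hand side as a source. The bootstrap pointwise decay from Lemma \ref{lem:BA1} implies $|\del w| \ll s^{-1}$ and $|\underdel w| \ll t^{-1}s^{-1+\delta}$, which is precisely the smallness required by the hypothesis of \eqref{eq:quasi-EE} and also lets one absorb the quasilinear boundary contributions of \eqref{eq:quasi-EE} and the $\tau Z M_1, M_4$ terms of \eqref{eq:quasi-conformal} into the left-hand side.

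Using the commutator Lemma \ref{lem:comm1}, the commuted nonlinearity $\Gamma^I(P^{\gamma\alpha\beta}\del_\gamma w \del_\alpha\del_\beta w)$ decomposes into a finite sum of null forms $N_d(\Gamma^{I_1}w, \Gamma^{I_2}w)$ with $|I_1|+|I_2|+d \leq N-1$, up to lower-order commutators from $[\Gamma^I, \Box]$ controlled by Lemma \ref{lem:comm2}. Each null form is then decomposed via Lemma \ref{lem:null-classical}. Since $N \geq 4$, at least one of the two indices satisfies $|I_k| \leq (N-1)/2 \leq N-3$, so I would place that factor in $L^\infty$ using the decay rates of Lemma \ref{lem:BA1} and the other in a weighted $L^2_f$-norm using Lemma \ref{lem:BA0}. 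The three good-derivative pieces $|\underdel\Gamma^{I_1}w \cdot \del\del\Gamma^{I_2}w|$, $|\del\Gamma^{I_1}w \cdot \underdel\del\Gamma^{I_2}w|$ and $t^{-1}|\del\Gamma^{I_1}w \cdot \del\Gamma^{I_2}w|$ then produce a source whose $L^2_f(\Hcal_\tau)$-norm is bounded by $(C_1\eps)^2 \tau^{-2+\delta}$: for instance, $\|\underdel\Gamma^{I_1}w \cdot \del\del\Gamma^{I_2}w\|_{L^2_f} \leq \|(t/s)\underdel\Gamma^{I_1}w\|_{L^\infty}\|(s/t)\del\del\Gamma^{I_2}w\|_{L^2_f} \lesssim (C_1\eps)^2 \tau^{-2+\delta}$. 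The $\tau$-integral then yields the required $\eps + (C_1\eps)^2$ bound for the natural energy.

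The main obstacle is the remaining semi-null piece $(s/t)^2 |\del_t\Gamma^{I_1}w||\del_t\del_t\Gamma^{I_2}w|$, for which the direct $L^\infty\cdot L^2$ split gives only $(C_1\eps)^2 \tau^{-1}$ and hence logarithmic growth in $\tau$. The remedy, which is the technical heart of the argument, is to replace $(s^2/t^2)\del_t\del_t\Gamma^{I_2}w$ using the wave equation itself: writing $-\Box = \del_t^2 - \Delta$, expanding $\Delta$ in the semi-hyperboloidal frame, and isolating the $(|x|^2/t^2)\del_t^2$ part on both sides, one trades $(s/t)^2\del_t^2\Gamma^{I_2}w$ for a combination of tangential derivatives $\underdel_a\Gamma^{I_2}w$, $\underdel_a\underdel_b\Gamma^{I_2}w$ together with the commuted nonlinearity $\Gamma^{I_2}(P^{\gamma\alpha\beta}\del_\gamma w\del_\alpha\del_\beta w)$. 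The tangential pieces then fall into the good regime just analyzed, and the new nonlinear contribution is again a null form that is absorbed inductively.

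For the conformal energy bound, the same scheme applies with the identity \eqref{eq:quasi-conformal} in place of \eqref{eq:E-E2}, but the extra $\tau$-weight in the integrand requires a source decay of $\tau^{-2-\delta_0}$. This is delivered by the same good-derivative decomposition, and the only unavoidable loss is an $s^\delta$ factor, matching the right-hand side of \eqref{eq:improved1}. The most delicate subcase remains the one where $|I_1|$ and $|I_2|$ are both near $(N-1)/2$: here the $(s/t)^2\del_t^2$ substitution is essential, and one must also track $L_0$-derivatives of $\Gamma^I w$, which are handled via the $L^2$-type inequality \eqref{eq:E-E4} of Lemma \ref{lem:L2type}.
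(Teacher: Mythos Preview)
Your overall strategy would close, but it is considerably more elaborate than the paper's, and it contains one misstatement.

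\textbf{Where the paper is simpler.} For $|I|\le N-1$ the paper does \emph{not} use the quasilinear machinery at all. It writes $-\Box\,\Gamma^I w = -\Gamma^I N(w,w)$ and applies the \emph{linear} estimates \eqref{eq:E-E1} and \eqref{eq:E-E2}, putting the entire right-hand side into the source $h$. Proposition~\ref{prop:quasi-conformal} and the $M_1,M_4$ analysis are reserved for the top order $|I|=N$ (Proposition~\ref{prop:improved3}), where one genuinely cannot afford to place two derivatives on the high-order factor. So your plan to invoke \eqref{eq:quasi-EE} and Proposition~\ref{prop:quasi-conformal} here, and to absorb $\tau Z M_1$ and $M_4$, is extra work that the proof does not need.

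\textbf{The $(s/t)^2\,\del_t\del_t$ term.} Your wave-equation substitution works, but the paper uses a shorter device: the pointwise relation $|\del u|\lesssim (t-|x|)^{-1}|\Gamma u|$. One writes
\[
\big\|(\tau/t)^2\,\del_t\Gamma^{I_1}w\,\del_t\del_t\Gamma^{I_2}w\big\|_{L^2_f}
\le \big\|(\tau/t)(t-|x|)^{-1}\del_t\Gamma^{I_1}w\big\|_{L^\infty}\,
\big\|(\tau/t)(t-|x|)\,\del_t\del_t\Gamma^{I_2}w\big\|_{L^2_f},
\]
and then $(t-|x|)\,\del_t\del_t\Gamma^{I_2}w$ is bounded by $\Gamma\del_t\Gamma^{I_2}w$, which costs only one extra $\Gamma$ and lands in the natural energy at order $\le N$. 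Since $(t-|x|)^{-1}\sim t/s^2$, the $L^\infty$ factor gives $C_1\eps\,\tau^{-2}$ directly. This avoids the inductive loop through the commuted nonlinearity that your substitution introduces.

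\textbf{A misstatement.} For the conformal estimate you do \emph{not} need source decay $\tau^{-2-\delta_0}$. The estimate \eqref{eq:E-E2} carries a weight $\tau$, so $\|h\|_{L^2_f(\Hcal_\tau)}\lesssim (C_1\eps)^2\tau^{-2+\delta}$ already yields
\[
\int_{s_0}^{s}\tau\cdot\tau^{-2+\delta}\,d\tau \lesssim s^{\delta},
\]
which is exactly the bound in \eqref{eq:improved1}. The same $\tau^{-2+\delta}$ bound that closes the natural energy closes the conformal one; no sharper decay is required.
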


\begin{proof}
We first act the vector field $\Gamma^I$ (with $|I| \leq N-1$) on the wave equation \eqref{eq:model-2d}, and arrive at
$$
- \Box \Gamma^I w = - \Gamma^I N(w, w)  = - \sum_{|I_1| + |I_2| + d = |I|} N_d (\Gamma^{I_1} w, \Gamma^{I_2} w).
$$
In order to refine the bound for $E(\Gamma^I w, s)^{1/2}$, we apply the energy estimates \eqref{eq:E-E1} to get
$$
\aligned
&E(\Gamma^I w, s)^{1/2}
\\
\lesssim
&E(\Gamma^I w, s_0)^{1/2}
+
\int_{s_0}^s \big\| \Gamma^I N (w, w) \big\|_{L^2_f(\Hcal_\tau)} \, d\tau.
\endaligned
$$
We find  
$$
\aligned
\big\| \Gamma^I N (w, w) \big\|_{L^2_f(\Hcal_\tau)}
&\lesssim
\sum_{|I_1|  + |I_2|  + d = |I| } \big\| N_d (\Gamma^{I_1} w, \Gamma^{I_2} w) \big\|_{L^2_f(\Hcal_\tau)}
\\
&\lesssim
\sum_{|I_1| +  |I_2|  \leq |I| } \Big(\big\| (\tau/t)^2 \del_t \Gamma^{I_1} w \del_t \del_t \Gamma^{I_2} w \big\|_{L^2_f(\Hcal_\tau)}
+ \sum_a \big\| \del \Gamma^{I_1} w \overline{\del}_a \del \Gamma^{I_2} w  \big\|_{L^2_f(\Hcal_\tau)}
\\
&+ \sum_a \big\| \overline{\del}_a \Gamma^{I_1} w \del \del \Gamma^{I_2} w  \big\|_{L^2_f(\Hcal_\tau)}
+  \big\| t^{-1} \del \Gamma^{I_1} w  \del \Gamma^{I_2} w  \big\|_{L^2_f(\Hcal_\tau)} \Big).
\endaligned
$$
Now we estimate each of the terms. First, we have
$$
\aligned
& \sum_{|I_1| +  |I_2|  \leq |I| } \big\| (\tau/t)^2 \del_t \Gamma^{I_1} w \del_t \del_t \Gamma^{I_2} w \big\|_{L^2_f(\Hcal_\tau)}
\\
&\lesssim
\sum_{|I_1| \leq N-3, |I_2| \leq N-1} \big\| (\tau/t) (t-|x|)^{-1} \del_t \Gamma^{I_1} w \big\|_{L^\infty(\Hcal_\tau)}  \big\| (\tau/t) (t-|x|) \del_t \del_t \Gamma^{I_2} w \big\|_{L^2_f(\Hcal_\tau)}
\\
&+
\sum_{|I_1| \leq N-1, |I_2| \leq N-3} \big\| (\tau/t) \del_t \Gamma^{I_1} w \big\|_{L^2_f(\Hcal_\tau)}  \big\| (\tau/t) \del_t \del_t \Gamma^{I_2} w \big\|_{L^\infty(\Hcal_\tau)}
\\
&\lesssim
\sum_{|I_1| \leq N-3, |I_2| \leq N-1} \big\| (\tau/t) (t-|x|)^{-1} \del_t \Gamma^{I_1} w \big\|_{L^\infty(\Hcal_\tau)}  \big\| (\tau/t) \Gamma \del_t \Gamma^{I_2} w \big\|_{L^2_f(\Hcal_\tau)}
\\
&+
\sum_{|I_1| \leq N-1, |I_2| \leq N-3} \big\| (\tau/t) \del_t \Gamma^{I_1} w \big\|_{L^2_f(\Hcal_\tau)}  \big\| (\tau/t) (t-|x|)^{-1} \Gamma \del_t \Gamma^{I_2} w \big\|_{L^\infty(\Hcal_\tau)}
\lesssim
(C_1 \eps)^2 \tau^{-2},
\endaligned
$$
in which we used the relation $|\del w| \lesssim (t-|x|)^{-1} |\Gamma w|$.
Next, we proceed by estimating
$$
\aligned
& 
\sum_{a, |I_1| + |I_2| \leq N-1} \big\| \del \Gamma^{I_1} w \overline{\del}_a \del \Gamma^{I_2} w  \big\|_{L^2_f(\Hcal_\tau)}
\\
&\lesssim
\sum_{a, |I_1| + |I_2| \leq N-1} \big\| \del \Gamma^{I_1} w t^{-1} L_a \del \Gamma^{I_2} w  \big\|_{L^2_f(\Hcal_\tau)}
\\
&\lesssim
\sum_{a, |I_1|\leq N-1, |I_2| \leq N-3} \big\| (\tau/t) \del \Gamma^{I_1} w \big\|_{L^2_f(\Hcal_\tau)} \big\| (t/\tau) t^{-1} L_a \del \Gamma^{I_2} w  \big\|_{L^\infty(\Hcal_\tau)}
\\
&+
\sum_{a, |I_1|\leq N-3, |I_2| \leq N-1} \big\| \del \Gamma^{I_1} w t^{-1} (t/\tau) \big\|_{L^\infty(\Hcal_\tau)} \big\| (\tau/t) L_a \del \Gamma^{I_2} w  \big\|_{L^2_f(\Hcal_\tau)}
\lesssim
(C_1 \eps)^2 \tau^{-2}.
\endaligned
$$
Then, we note
$$
\aligned
& 
\sum_{a, |I_1| + |I_2| \leq N-1} \big\| \overline{\del}_a \Gamma^{I_1} w \del \del \Gamma^{I_2} w  \big\|_{L^2_f(\Hcal_\tau)}
\\
&\lesssim
\sum_{a, |I_1| \leq N-3, |I_2| \leq N-1} \big\| \overline{\del}_a \Gamma^{I_1} w (t/\tau) \big\|_{L^\infty(\Hcal_\tau)} \big\| (\tau/t)  \del \del \Gamma^{I_2} w  \big\|_{L^2_f(\Hcal_\tau)}
\\
&+
\sum_{a, |I_1| \leq N-1, |I_2| \leq N-3} \big\| \tau \overline{\del}_a \Gamma^{I_1} w  \big\|_{L^2_f(\Hcal_\tau)} \big\| \tau^{-1} \del \del \Gamma^{I_2} w  \big\|_{L^\infty (\Hcal_\tau)}
\lesssim (C_1 \eps)^2 \tau^{-2+\delta}.
\endaligned
$$
Finally, we easily get (without details)
$$
\aligned
\sum_{|I_1| + |I_2| \leq N-1} \big\| t^{-1} \del \Gamma^{I_1} w  \del \Gamma^{I_2} w  \big\|_{L^2_f(\Hcal_\tau)}
\lesssim (C_1 \eps)^2 \tau^{-2}.
\endaligned
$$
To sum things up, we arrive at
\bel{eq:low-order}
\aligned
\big\| \Gamma^I N (w, w) \big\|_{L^2_f(\Hcal_\tau)}
\lesssim (C_1 \eps)^2 \tau^{-2+\delta},
\qquad
|I| \leq N-1.
\endaligned
\ee
Then the energy estimates give us
$$
\aligned
E(\Gamma^I w, s)^{1/2}
\lesssim
&E(\Gamma^I w, s_0)^{1/2}
+
\int_{s_0}^s \big\| \Gamma^I N (w, w) \big\|_{L^2_f(\Hcal_{\tau})} \, d\tau
\\
\lesssim
& \eps + (C_1 \eps)^2 \int_{s_0}^s \tau^{-2+\delta} \, d\tau
\lesssim
\eps + (C_1 \eps)^2,
\qquad
|I| \leq N-1.
\endaligned
$$
Similarly, the conformal energy estimates lead us to
$$
\aligned
E_{con}(\Gamma^I w, s)^{1/2}
\lesssim
&E_{con}(\Gamma^I w, s_0)^{1/2}
+
\int_{s_0}^s \tau \big\| \Gamma^I N (w, w) \big\|_{L^2_f(\Hcal_{\tau})} \, d\tau
\\
\lesssim
& \eps + (C_1 \eps)^2 \int_{s_0}^s \tau^{-1+\delta} \, d\tau
\lesssim
\eps + (C_1 \eps)^2 s^\delta,
\qquad
|I| \leq N-1,
\endaligned
$$
in which we used \eqref{eq:low-order}. 
\end{proof}

\begin{proposition}\label{prop:improved2}
Under the assumptions in \eqref{eq:BA}, we have the following improved estimates for all $s \in [s_0, s_1)$
\bel{eq:improved2} 
\aligned
E(\Gamma^I w, s)^{1/2}
&\lesssim \eps + (C_1 \eps)^{3/2},
\qquad
|I| \leq N.
\endaligned
\ee
\end{proposition}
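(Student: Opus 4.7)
The plan is to apply the quasilinear energy estimate \eqref{eq:quasi-EE} to $u = \Gamma^I w$ with $|I|=N$. First I would commute $\Gamma^I$ through equation \eqref{eq:model-2d}. By Lemma \ref{lem:comm1}, this yields
\begin{equation*}
-\Box u + P^{\gamma\alpha\beta}\del_\gamma w\,\del_\alpha\del_\beta u \;=\; h,
\end{equation*}
where
\begin{equation*}
h \;=\; -\sum_{\substack{|I_1|+|I_2|+d = N \\ |I_2|\le N-1}} N_d(\Gamma^{I_1}w,\Gamma^{I_2}w) \;+\; [\Box, \Gamma^I]w,
\end{equation*}
the commutator $[\Box,\Gamma^I]$ producing only harmless lower-order contributions from occurrences of $L_0$ (since $[\Box,L_0]=-2\Box$ and every other admissible field commutes with $\Box$). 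Lemma \ref{lem:BA1} guarantees the pointwise smallness hypothesis of \eqref{eq:quasi-EE} once $\eps_0$ is small enough.

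Three source terms then remain to estimate. The two explicitly quasilinear ones, $(\tau/t)P^{\gamma\alpha\beta}\del_\gamma\del_\alpha w\,\del_\beta u\,\del_t u$ and $(\tau/t)P^{\gamma\alpha\beta}\del_t\del_\gamma w\,\del_\alpha u\,\del_\beta u$, are themselves null trilinear expressions: the symmetry $P^{\gamma\alpha\beta}=P^{\gamma\beta\alpha}$ combined with $\del_\gamma\del_\alpha=\del_\alpha\del_\gamma$ allows me to recast the first as $\widetilde N(u,w)$ with a null coefficient, and a similar manipulation works for the second. Lemma \ref{lem:null-classical} then splits each into pieces carrying an $(s/t)^2$ prefactor or a good derivative $\underdel_a$. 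Since only at most two derivatives land on $w$, the pointwise bounds of Lemma \ref{lem:BA1} supply $|\del\del w|\lesssim C_1\eps\,\tau^{-1}$ and $|\underdel_a\del w|\lesssim C_1\eps\,t^{-1}s^{-1+\delta}$, while the factors $\del u,\underdel u,\del_t u$ are paired in $L^2$ via the bootstrap $E(u,\tau)\le (C_1\eps)^2$.

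The commutator source $h$ requires a high-low split. When the smaller of $|I_1|,|I_2|$ is $\le N-3$, the corresponding factor of $\Gamma^{I_j}w$ and its derivatives go in $L^\infty$ by Lemma \ref{lem:BA1}, and the high-order factor in $L^2$ via Lemma \ref{lem:BA0}. The genuinely top-order case $|I_2|=N-1$ is precisely where the argument of Proposition \ref{prop:improved1} breaks down: the factor $\del\del\Gamma^{I_2}w$ has $N+1$ derivatives, one too many for direct $L^2$ control. The null decomposition rescues us, since each problematic term carries either an $(s/t)^2$ prefactor or a good derivative $\underdel_a$; using $\underdel_a = L_a/t$ I would rewrite
\begin{equation*}
\underdel_a \del \Gamma^{I_2}w \;=\; t^{-1}\del L_a \Gamma^{I_2}w \;+\; (\text{lower}),
\end{equation*}
so that $L_a\Gamma^{I_2}w$ is a vector-field application of order exactly $N$ and Lemma \ref{lem:BA0} gives $\|(s/t)\del L_a\Gamma^{I_2}w\|_{L^2}\lesssim C_1\eps$. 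The dangerous $(s/t)^2\del_t\del_t$ occurrences are treated by substituting the wave equation,
\begin{equation*}
(s/t)^2\del_t\del_t\Gamma^{I_2}w \;=\; \underdel_a\underdel^a\Gamma^{I_2}w \;-\; 2(x^a/t)\underdel_a\del_t\Gamma^{I_2}w \;+\; \Gamma^{I_2}N(w,w) \;+\; (\text{commutators}),
\end{equation*}
so that every surviving second-order quantity contains a good derivative (controlled in $L^2$ by the conformal energy and Lemma \ref{lem:L2type}, with at worst a $\tau^{2\delta}$ growth) or is the nonlinearity itself (of size $\tau^{-2+\delta}$ by the analogue of \eqref{eq:low-order}).

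Assembling the pieces, and choosing $\delta$ small enough that the $\tau^{2\delta}$ growth from the top-order conformal energy is defeated by the $s^{-1+\delta}$ gains, I expect $\|\text{full integrand}\|_{L^2_f(\Hcal_\tau)}\lesssim (C_1\eps)^2\tau^{-1-\sigma}$ for some $\sigma>0$, so pairing against $\|\del_t u\|_{L^2}\le C_1\eps$ and integrating in $\tau\in[s_0,s]$ yields a total contribution $\lesssim(C_1\eps)^3$. Combined with the initial energy $\lesssim\eps^2$, this gives $E(u,s)\lesssim\eps^2+(C_1\eps)^3$, i.e.~\eqref{eq:improved2}. The main obstacle throughout is the top-order piece with $|I_2|=N-1$: naive $L^\infty$--$L^2$ estimates produce a logarithmic growth, and an integrable rate is recovered only by the joint use of the null structure, the wave equation, and the identity $\underdel_a=L_a/t$ to transfer the last derivative onto a vector-field-controlled quantity at the top level $N$.
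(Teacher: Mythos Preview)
Your overall strategy---commute $\Gamma^I$, apply the quasilinear energy estimate \eqref{eq:quasi-EE}, decompose each null form via Lemma~\ref{lem:null-classical}, and pair factors in $L^\infty$--$L^2$---matches the paper's proof. The main difference lies in how you handle the piece $(s/t)^2\,\del_t\Gamma^{I_1}w\,\del_t\del_t\Gamma^{I_2}w$ with $|I_2|\le N-1$. You propose substituting the wave equation to rewrite $(s/t)^2\del_t\del_t\Gamma^{I_2}w$ in terms of good derivatives and the nonlinearity, then invoke the conformal energy to extract the missing $s^{-1}$ decay from $\underdel_a$. This works, but it is more elaborate than what the paper does: there one simply uses the pointwise inequality $(t-|x|)\,|\del\del u|\lesssim|\Gamma\del u|$, which---combined with $(s/t)^2\approx(t-|x|)/t$---converts the dangerous factor directly into $\|(\tau/t)\,\Gamma\del_t\Gamma^{I_2}w\|_{L^2}\lesssim E(\Gamma^{\le N}w,\tau)^{1/2}$, controlled by the \emph{natural} energy alone with no appeal to the wave equation or the conformal energy. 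Your route recovers the same $\tau^{-2+O(\delta)}$ rate but at the cost of an extra layer of substitution.

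One small framing issue: you say $\del\del\Gamma^{I_2}w$ with $|I_2|=N-1$ carries ``$N+1$ derivatives, one too many for direct $L^2$ control.'' In fact $\del\in\{\Gamma\}$, so $\|(\tau/t)\del\del\Gamma^{I_2}w\|_{L^2}\lesssim E(\Gamma^{\le N}w,\tau)^{1/2}$ \emph{is} directly available; the genuine obstruction is that the naive pairing only yields $\tau^{-1}$ decay, not integrable. Both your wave-equation trick and the paper's $(t-r)$ trick are devices to upgrade this to $\tau^{-2+O(\delta)}$.
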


\begin{proof}
Applying the vector field $\Gamma^I$ (with $|I| = N$) to the wave equation \eqref{eq:model-2d}, we have
$$
- \Box \Gamma^I w = \Gamma^I N(w, w)  = \sum_{|I_1| + |I_2| + d = |I|} N_d (\Gamma^{I_1} w, \Gamma^{I_2} w),
$$
which is
\bel{eq:quasi-div}
-\Box \Gamma^I w + P^{\gamma \alpha\beta} \del_\gamma w \del_\alpha \del_\beta \Gamma^I w 
= - \sum_{\substack{|I_1| + |I_2| + d = |I|\\ d\geq 1}} N_d (\Gamma^{I_1} w, \Gamma^{I_2} w) - \sum_{\substack{|I_1| + |I_2|  = |I|\\ |I_2| \leq N-1}} N (\Gamma^{I_1} w, \Gamma^{I_2} w).
\ee
Recall the energy estimates for quasilinear wave, and we have
$$
\aligned
E(\Gamma^I w, s)
&\lesssim 
E(\Gamma^I w, s_0)
+
\int_{s_0}^s \int_{\Hcal_{\tau}^*} \Big| (\tau/t) P^{\gamma \alpha\beta} \del_\gamma \del_\alpha w \del_\beta \Gamma^I w \del_t \Gamma^I w \Big| \, dxd\tau
\\
&+
\int_{s_0}^s \int_{\Hcal_{\tau}^*} \Big| (\tau/t) P^{\gamma \alpha\beta} \del_t \del_\gamma w \del_\alpha \Gamma^I w  \del_\beta \Gamma^I w \Big| \, dxd\tau
\\
&+
\int_{s_0}^s \int_{\Hcal_{\tau}^*} \Big| (\tau/t)   \Big(   \sum_{\substack{|I_1| + |I_2| + d = |I|\\ d\geq 1}} N_d (\Gamma^{I_1} w, \Gamma^{I_2} w) + \sum_{\substack{|I_1| + |I_2|  = |I|\\ |I_2| \leq N-1}} N (\Gamma^{I_1} w, \Gamma^{I_2} w)   \Big)   \del_t \Gamma^I w \Big| \, dxd\tau.
\endaligned
$$
First, we note that
$$
\aligned
& 
\big\| P^{\gamma \alpha\beta} \del_\gamma \del_\alpha w \del_\beta \Gamma^I w \big\|_{L^2_f(\Hcal_{\tau})}
\\
&\lesssim
\big\| (\tau/t)^2 \del_t \del_t w \del_t \Gamma^I w  \big\|_{L^2_f(\Hcal_{\tau})}
+
\sum_a \big\| \del_t \underdel_a w \del_t \Gamma^I w  \big\|_{L^2_f(\Hcal_{\tau})}
\\
&+
\sum_a \big\| \del_t \del_t w \underdel_a \Gamma^I w  \big\|_{L^2_f(\Hcal_{\tau})}
+
\sum_{a,b} \big\| \del_t \underdel_a w \underdel_b \Gamma^I w  \big\|_{L^2_f(\Hcal_{\tau})}
\\
&\lesssim
\big\| (\tau/t) \del_t \del_t w \big\|_{L^\infty(\Hcal_{\tau})} \big\| (\tau/t) \del_t \Gamma^I w  \big\|_{L^2_f(\Hcal_{\tau})}
+
\sum_a \big\| \del_t \underdel_a w \big\|_{L^\infty(\Hcal_{\tau})} \big\|  \del_t \Gamma^I w  \big\|_{L^2_f(\Hcal_{\tau})}
\\
&+
\sum_a \big\| \tau^{-1} \del_t \del_t w \big\|_{L^\infty(\Hcal_{\tau})} \big\| \tau \underdel_a \Gamma^I w  \big\|_{L^2_f(\Hcal_{\tau})}
+
\sum_{a,b} \big\| \tau^{-1} \del_t \underdel_a w \big\|_{L^\infty(\Hcal_{\tau})} \big\| \tau \underdel_b \Gamma^I w  \big\|_{L^2_f(\Hcal_{\tau})}
\\
&\lesssim (C_1 \eps)^2 \tau^{-2+ 2 \delta},
\endaligned
$$
which leads us to
$$
\int_{s_0}^s \int_{\Hcal_{\tau}^*} \Big| (\tau/t) P^{\gamma \alpha\beta} \del_\gamma \del_\alpha w \del_\beta \Gamma^I w \del_t \Gamma^I w \Big| \, dxd\tau
\lesssim
(C_1 \eps)^2 \int_{s_0}^s  \tau^{-2+ 2 \delta} \, d\tau
\lesssim (C_1 \eps)^3.
$$

Next, we find that
$$
\aligned
& 
\big\| (\tau/t) P^{\gamma \alpha\beta} \del_t \del_\gamma w \del_\alpha \Gamma^I w  \del_\beta \Gamma^I w \big\|_{L^1_f(\Hcal_{\tau})}
\\
&\lesssim
\big\| (\tau/t)^3  \del_t \del_t w \del_t \Gamma^I w  \del_t \Gamma^I w \big\|_{L^1_f(\Hcal_{\tau})}
+
\big\| (\tau/t) \underdel_a \del_t w \del_t \Gamma^I w  \del_t \Gamma^I w \big\|_{L^1_f(\Hcal_{\tau})}
\\
&+
\big\| (\tau/t) \del_t \del_t w \underdel_a \Gamma^I w  \del_t \Gamma^I w \big\|_{L^1_f(\Hcal_{\tau})}
+
\big\| (\tau/t) \del_t \del_t w \del_t \Gamma^I w  \underdel_a \Gamma^I w \big\|_{L^1_f(\Hcal_{\tau})}
\\
&\lesssim
\big\| (\tau/t)  \del_t \del_t w \big\|_{L^\infty(\Hcal_{\tau})} \big\| (\tau/t) \del_t \Gamma^I w \big\|_{L^2_f(\Hcal_{\tau})} \big\| (\tau/t) \del_t \Gamma^I w \big\|_{L^2_f(\Hcal_{\tau})}
\\
&+
\big\| (t/\tau) \underdel_a \del_t w \big\|_{L^\infty(\Hcal_{\tau})} \big\| (\tau/t) \del_t \Gamma^I w \big\|_{L^2_f(\Hcal_{\tau})} \big\| (\tau/t) \del_t \Gamma^I w \big\|_{L^2_f(\Hcal_{\tau})}
\\
&+
\big\| \tau^{-1} \del_t \del_t w \big\|_{L^\infty(\Hcal_{\tau})} \big\| \tau \underdel_a \Gamma^I w \big\|_{L^2_f(\Hcal_{\tau})} \big\| (\tau/t) \del_t \Gamma^I w \big\|_{L^2_f(\Hcal_{\tau})}
\\
&+
\big\| \tau^{-1} \del_t \del_t w \big\|_{L^\infty(\Hcal_{\tau})} \big\| (\tau/t) \del_t \Gamma^I w \big\|_{L^2_f(\Hcal_{\tau})} \big\| \tau \underdel_a \Gamma^I w \big\|_{L^2_f(\Hcal_{\tau})} 
\lesssim
(C_1 \eps)^3 \tau^{-2+2\delta},
\endaligned
$$
which implies that
$$
\int_{s_0}^s \int_{\Hcal_{\tau}} \Big| (\tau/t) P^{\gamma \alpha\beta} \del_t \del_\gamma w \del_\alpha \Gamma^I w  \del_\beta \Gamma^I w \Big| \, dxd\tau
\lesssim (C_1 \eps)^3.
$$
Similarly, we obtain
$$
\aligned
 \sum_{\substack{|I_1| + |I_2| + d = |I|\\ d\geq 1}} \big\| N_d (\Gamma^{I_1} w, \Gamma^{I_2} w) \big\|_{L^2_f(\Hcal_{\tau})}  + \sum_{\substack{|I_1| + |I_2|  = |I|\\ |I_2| \leq N-1}} \big\| N (\Gamma^{I_1} w, \Gamma^{I_2} w) \big\|_{L^2_f(\Hcal_{\tau})} 
\lesssim (C_1 \eps)^3 \tau^{-2+2\delta},
\endaligned
$$
and we further have
$$
\aligned
& 
\int_{s_0}^s \int_{\Hcal_{\tau}} \Big| (\tau/t)   \Big(   \sum_{\substack{|I_1| + |I_2| + d = |I|\\ d\geq 1}} N_d (\Gamma^{I_1} w, \Gamma^{I_2} w) + \sum_{\substack{|I_1| + |I_2|  = |I|\\ |I_2| \leq N-1}} N (\Gamma^{I_1} w, \Gamma^{I_2} w)   \Big)   \del_t \Gamma^I w \Big| \, dxd\tau
\\
&\lesssim
(C_1 \eps)^3.
\endaligned
$$
The proof is completed.
\end{proof}

The estimates left to be improved are $E_{con}(\Gamma^I w, s)^{1/2}$ with $|I| = N$. We need the following result, which tells us that the conformal energy $E_{con}(\Gamma^I w, s)$ and $\widetilde{E}_{con}(\Gamma^I w, s)$ in Proposition \ref{prop:quasi-conformal} can be somehow bounded by each other.

\begin{lemma}\label{lem:conformal-equ}
With the assumptions in \eqref{eq:BA}, we have the following estimates for all $s \in [s_0, s_1)$
\bel{eq:conformal-equ}
\big| E_{con}(\Gamma^I w, s) - \widetilde{E}_{con}(\Gamma^I w, s)  \big|
\lesssim
(C_1 \eps)^3 s^{4\delta},
\qquad
|I| = N.
\ee
\end{lemma}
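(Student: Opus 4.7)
The plan is to treat $\widetilde{E}_{con}$ as a perturbation of $E_{con}$ about the Minkowski background. I substitute $g^{\alpha\beta}=-m^{\alpha\beta}+P^{\gamma\alpha\beta}\del_\gamma w$ and split $Z=Z_0+Z_\ast$ and $M_2=M_{2,0}+M_{2,\ast}$, where $Z_0,M_{2,0}$ collect the terms with $-m^{\alpha\beta}$ alone and $Z_\ast,M_{2,\ast}$ are linear in $P^{\gamma\alpha\beta}\del_\gamma w$. Using the explicit form of $\Psi^\beta_\alpha$ together with Step I, one checks directly the flat identities $-m^{\alpha\beta}\Psi^0_\alpha\Psi^0_\beta=1$, $-m^{\alpha\beta}\Psi^0_\alpha\Psi^a_\beta=x_a/s$, $-m^{\alpha\beta}\Psi^\gamma_\alpha\overline{\del}_\gamma\Psi^0_\beta=2/s$, and $-m^{\alpha\beta}\Psi^a_\alpha\Psi^b_\beta=-\delta^{ab}$; hence $Z_0=Ku+u$ and $M_{2,0}=\tfrac{1}{2}\sum_a(s\underdel_a u)^2$, so that $\int(\tfrac{1}{2}Z_0^2+M_{2,0})\,dx$ is a fixed numerical multiple of $E_{con}(u,s)$. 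The difference $E_{con}-\widetilde{E}_{con}$ therefore reduces (up to that numerical factor coming from the flat part) to the integral of $Z_0 Z_\ast+\tfrac{1}{2}Z_\ast^2+M_{2,\ast}$ over $\Hcal_s^*$.

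The dominant contribution is the cross term $\int Z_0 Z_\ast\,dx$, which I control by Cauchy--Schwarz. Since $Z_0=Ku+u$, the bootstrap assumption \eqref{eq:BA} yields $\|Z_0\|_{L^2_f(\Hcal_s)}\leq E_{con}(u,s)^{1/2}\lesssim C_1\eps\,s^{2\delta}$. For $\|Z_\ast\|_{L^2_f(\Hcal_s)}$, the key is that every coefficient of the form $P^{\gamma\alpha\beta}\del_\gamma w$ contracted with a $\Psi$-factor is pointwise bounded by $C_1\eps\,s^{-1+\delta}$; this follows from Step II and Step III of Section~\ref{subsect:useful} together with the pointwise decay $|\del w|\lesssim C_1\eps\,s^{-1}$ and $|\underdel w|\lesssim C_1\eps\,t^{-1}s^{-1+\delta}$ from Lemma~\ref{lem:BA1}, and crucially exploits the null condition on $P^{\gamma\alpha\beta}$. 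Pairing these coefficients with the $L^2_f(\Hcal_s)$ norms of $s\del_s u$, $s\overline{\del}_a u$, $su$ and $u$ (bounded by $s\,E(u,s)^{1/2}$, $E_{con}(u,s)^{1/2}$, and $\|(s/t)u\|_{L^2_f}$ from Lemmas~\ref{lem:BA0}--\ref{lem:L2type}, with the factor $t/s$ absorbed into the coefficient bounds using $|\Psi^0_\alpha|\leq t/s$), I obtain $\|Z_\ast\|_{L^2_f(\Hcal_s)}\lesssim (C_1\eps)^2 s^\delta$. Hence $\int|Z_0 Z_\ast|\,dx\lesssim (C_1\eps)^3 s^{3\delta}$, well within the claimed threshold.

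The remaining two terms are strictly subleading. First, $\int\tfrac{1}{2}Z_\ast^2\,dx\leq\tfrac{1}{2}\|Z_\ast\|_{L^2_f}^2\lesssim(C_1\eps)^4 s^{2\delta}$. Second, for $M_{2,\ast}$ the perturbation of the coefficient $s^2 g^{\alpha\beta}\Psi^0_\alpha\Psi^0_\beta g^{ab}$ relative to its Minkowski value $-s^2\delta^{ab}$ is pointwise $O(C_1\eps\,s^{1+\delta})$, which combined with $\|\overline{\del}u\|_{L^2_f(\Hcal_s)}^2\leq s^{-2}E_{con}(u,s)\lesssim(C_1\eps)^2 s^{-2+4\delta}$ yields $\int|M_{2,\ast}|\,dx\lesssim(C_1\eps)^3 s^{-1+5\delta}$. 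Summing all three contributions produces the bound $(C_1\eps)^3 s^{4\delta}$. The main technical obstacle is securing the $s^{-1+\delta}$ pointwise estimate for every coefficient in $Z_\ast$: Step II directly handles the $\Psi^0\Psi^0$ contraction, whereas the $\Psi^0\Psi^a$ and $\Psi^\gamma\overline{\del}_\gamma\Psi^0$ contractions need the null condition on $P^{\gamma\alpha\beta}$ coupled with the decay estimates for $\overline{\del}\Psi^0$ from Step I. A secondary care point is that the pure $u$ contribution inside $Z_\ast$ (originating from the last term of $Z$) is not itself uniformly in $L^2_f(\Hcal_s)$, so it must be routed through the rewriting $u=(t/s)\cdot(s/t)u$ and the bootstrap bound on $\|(s/t)\Gamma^I w\|_{L^2_f(\Hcal_s)}$ provided by Lemma~\ref{lem:BA0}.
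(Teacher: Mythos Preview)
Your decomposition $Z=Z_0+Z_\ast$, $M_2=M_{2,0}+M_{2,\ast}$ and the verification that $Z_0=Ku+u$, $M_{2,0}=\tfrac12\sum_a(s\underdel_a u)^2$ is exactly the paper's route (the paper writes $\widehat Z,\widehat{M_2}$ for your $Z_0,M_{2,0}$). The overall architecture of the argument is correct.

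There is, however, a slip in your uniform coefficient claim. The assertion that ``every coefficient of the form $P^{\gamma\alpha\beta}\del_\gamma w$ contracted with a $\Psi$-factor is pointwise bounded by $C_1\eps\,s^{-1+\delta}$'' is not true for the $\Psi^0_\alpha\Psi^a_\beta$ contraction appearing in the second term of $Z_\ast$. Since $\Psi^a_\beta=\delta^a_\beta$, that coefficient is $P^{\gamma\alpha a}\del_\gamma w\,\Psi^0_\alpha$, and the null condition gives no cancellation once one index is pinned to a spatial value; decomposing in the hyperboloidal frame one only gets $|P^{\gamma\alpha a}\Psi^0_\gamma\Psi^0_\alpha\,\overline{\del}_0 w|\lesssim (t/s)^2\cdot C_1\eps\,t^{-1}=C_1\eps\,t/s^2\le C_1\eps$ (using $t\le s^2$), not $C_1\eps\,s^{-1+\delta}$. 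Paired with $\|s\overline{\del}_a\Gamma^I w\|_{L^2_f(\Hcal_s)}\le E_{con}^{1/2}\lesssim C_1\eps\,s^{2\delta}$, this term alone forces $\|Z_\ast\|_{L^2_f(\Hcal_s)}\lesssim (C_1\eps)^2 s^{2\delta}$, not $(C_1\eps)^2 s^{\delta}$. A similar loss of one extra $s^{\delta}$ occurs in the terms carrying $\Gamma^I w$ once you route through $u=(t/s)(s/t)u$. Consequently the cross term is $\|Z_0\|_{L^2_f}\|Z_\ast\|_{L^2_f}\lesssim (C_1\eps)^3 s^{4\delta}$, which \emph{saturates} the stated bound rather than sitting ``well within'' it. Your estimate of $M_{2,\ast}$ is fine (indeed sharper than needed). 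With this correction the argument goes through and matches the paper's proof.
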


\begin{proof}
We take $u = \Gamma^I w$ with $|I| = N$ in Proposition \ref{prop:quasi-conformal}, and we recall the formulas for $E_{con}(\Gamma^I w, s), \widetilde{E}_{con}(\Gamma^I w, s)$, which read
$$
\aligned
&E_{con} (\Gamma^I w, s)
=
\int_{\Hcal_s^*} \sum_a \big( s \underdel_a \Gamma^I w \big)^2 + \big( s \del_s \Gamma^I w + 2 x^a \overline{\del}_a \Gamma^I w + \Gamma^I w \big)^2  \, dx,
\\
&\widetilde{E}_{con} (\Gamma^I w, s)
=
\int_{\Hcal_{s}^*} {1\over 2} Z^2 + M_2  \, dx,
\\
&Z 
= 
g^{\alpha\beta} \Psi^0_\alpha \Psi^0_\beta s \del_s \Gamma^I w 
+ 2 g^{\alpha\beta} \Psi^0_\alpha \Psi^a_\beta s \overline{\del}_a \Gamma^I w 
+ g^{\alpha\beta} \Psi^\gamma_\alpha (\overline{\del}_\gamma \Psi^0_\beta) s \Gamma^I w
- g^{\alpha\beta} \Psi^ 0_\alpha \Psi^0_\beta \Gamma^I w,
\\
&M_2 
= - {1\over 2}  s^2 g^{\alpha\beta} \Psi^0_\alpha \Psi^0_\beta  g^{a b} \overline{\del}_a \Gamma^I w \overline{\del}_b \Gamma^I w,
\endaligned
$$
in which 
$
g^{\alpha \beta} = -m^{\alpha \beta} + P^{\gamma \alpha\beta} \del_\gamma w.
$
We note that $E_{con}(\Gamma^I w, s)$ can also be expressed with
$$
\aligned
&E_{con}(\Gamma^I w, s)
=
\int_{\Hcal_{s}^*} {1\over 2} \widehat{Z}^2 + \widehat{M_2}  \, dx,
\\
&\widehat{Z} 
= 
\big(-m^{\alpha\beta}\big) \Psi^0_\alpha \Psi^0_\beta s \del_s \Gamma^I w 
+ 2 \big(-m^{\alpha\beta}\big) \Psi^0_\alpha \Psi^a_\beta s \overline{\del}_a \Gamma^I w 
+ \big(-m^{\alpha\beta}\big) \Psi^\gamma_\alpha (\overline{\del}_\gamma \Psi^0_\beta) s \Gamma^I w
\\
&\hskip0.35cm - \big(-m^{\alpha\beta}\big) \Psi^ 0_\alpha \Psi^0_\beta \Gamma^I w,
\\
&\widehat{M_2} 
= - {1\over 2}  s^2 \big(-m^{\alpha\beta}\big) \Psi^0_\alpha \Psi^0_\beta  \big(-m^{a b}\big) \overline{\del}_a \Gamma^I w \overline{\del}_b \Gamma^I w,
\endaligned
$$
Thus we have
\be 
\big| E_{con}(\Gamma^I w, s) - \widetilde{E}_{con}(\Gamma^I w, s)  \big|
\lesssim
\int_{\Hcal_{s}^*} \big|  Z^2 - \widehat{Z}^2  \big| + \big| M_2 - \widehat{M_2} \big|  \, dx.
\ee

First, we note
$$
\aligned
Z - \widehat{Z}
&=
P^{\gamma \alpha\beta} \del_\gamma w  \Psi^0_\alpha \Psi^0_\beta s \del_s \Gamma^I w
+ 2 P^{\gamma \alpha\beta} \del_\gamma w  \Psi^0_\alpha \Psi^a_\beta s \overline{\del}_a \Gamma^I w 
+ P^{\gamma \alpha\beta} \del_\gamma w \Psi^{\gamma'}_\alpha (\overline{\del}_{\gamma'} \Psi^0_\beta) s \Gamma^I w
\\
&- P^{\gamma \alpha\beta} \del_\gamma w \Psi^ 0_\alpha \Psi^0_\beta \Gamma^I w
\\
&=
P^{\gamma \alpha\beta} \Psi^\eta_\gamma  \overline{\del}_\eta w  \Psi^0_\alpha \Psi^0_\beta s \del_s \Gamma^I w 
+ 2 P^{\gamma \alpha\beta} \Psi^\eta_\gamma  \overline{\del}_\eta w   \Psi^0_\alpha \Psi^a_\beta s \overline{\del}_a \Gamma^I w 
+ P^{\gamma \alpha\beta} \Psi^\eta_\gamma  \overline{\del}_\eta w  \Psi^{\gamma'}_\alpha (\overline{\del}_{\gamma'} \Psi^0_\beta) s \Gamma^I w
\\
&- P^{\gamma \alpha\beta} \Psi^\eta_\gamma  \overline{\del}_\eta w  \Psi^ 0_\alpha \Psi^0_\beta \Gamma^I w
\\
&=
P^{\gamma \alpha\beta} \Psi^0_\gamma  \overline{\del}_0 w  \Psi^0_\alpha \Psi^0_\beta s \del_s \Gamma^I w 
+
P^{\gamma \alpha\beta} \Psi^a_\gamma  \overline{\del}_a w  \Psi^0_\alpha \Psi^0_\beta s \del_s \Gamma^I w 
+
2 P^{\gamma \alpha\beta} \Psi^\eta_\gamma  \overline{\del}_\eta w   \Psi^0_\alpha \Psi^a_\beta s \overline{\del}_a \Gamma^I w
\\
&+ P^{\gamma \alpha\beta} \Psi^\eta_\gamma  \overline{\del}_\eta w  \Psi^{0}_\alpha (\overline{\del}_{0} \Psi^0_\beta) s \Gamma^I w
+ P^{\gamma \alpha\beta} \Psi^\eta_\gamma  \overline{\del}_\eta w  \Psi^{a}_\alpha (\overline{\del}_{a} \Psi^0_\beta) s \Gamma^I w
- P^{\gamma \alpha\beta} \Psi^\eta_\gamma  \overline{\del}_\eta w  \Psi^ 0_\alpha \Psi^0_\beta \Gamma^I w.
\endaligned
$$
Next, we estimate each of the terms.
Recall that $| \overline{\del}_0 w| \lesssim C_1 \eps t^{-1}, | P^{\gamma \alpha\beta} \Psi^0_\gamma \Psi^0_\alpha \Psi^0_\beta | \lesssim (t/s)$, and we find
$$
\aligned
\| P^{\gamma \alpha\beta} \Psi^0_\gamma  \overline{\del}_0 w  \Psi^0_\alpha \Psi^0_\beta s \del_s \Gamma^I w \|_{L^2_f(\Hcal_s)}
\lesssim
&\| P^{\gamma \alpha\beta} \Psi^0_\gamma  \overline{\del}_0 w  \Psi^0_\alpha \Psi^0_\beta s \|_{L^\infty(\Hcal_s)} \| \del_s \Gamma^I w \|_{L^2_f(\Hcal_s)}
\\
\lesssim 
& C_1 \eps \| (t/s) t^{-1} s \|_{L^\infty(\Hcal_s)} \| \del_s \Gamma^I w \|_{L^2_f(\Hcal_s)}
\lesssim \big( C_1 \eps \big)^2.
\endaligned
$$
Recall that $| s \overline{\del}_a w | \lesssim C_1 \eps t^{-1} s^\delta$, and we find
$$
\aligned
\| P^{\gamma \alpha\beta} \Psi^a_\gamma  \overline{\del}_a w  \Psi^0_\alpha \Psi^0_\beta s \del_s \Gamma^I w \|_{L^2_f(\Hcal_s)}
\lesssim
&\| P^{\gamma \alpha\beta} \Psi^a_\gamma  \overline{\del}_a w  \Psi^0_\alpha \Psi^0_\beta s \|_{L^\infty(\Hcal_s)} \| \del_s \Gamma^I w \|_{L^2_f(\Hcal_s)}
\\
\lesssim
& C_1 \eps \| t^{-1} s^\delta (t/s)^2 \|_{L^\infty(\Hcal_s)} \| \del_s \Gamma^I w \|_{L^2_f(\Hcal_s)}
\lesssim \big( C_1 \eps \big)^2 s^\delta.
\endaligned
$$
Recall that $| \overline{\del}_\alpha w| \lesssim C_1 \eps t^{-1}$, and we find
$$
\aligned
\|2 P^{\gamma \alpha\beta} \Psi^\eta_\gamma  \overline{\del}_\eta w   \Psi^0_\alpha \Psi^a_\beta s \overline{\del}_a \Gamma^I w \|_{L^2_f(\Hcal_s)}
\lesssim
&\| P^{\gamma \alpha\beta} \Psi^\eta_\gamma  \overline{\del}_\eta w   \Psi^0_\alpha \Psi^a_\beta \|_{L^\infty(\Hcal_s)}  \| s \overline{\del}_a \Gamma^I w \|_{L^2_f(\Hcal_s)}
\\
\lesssim
& C_1 \eps \| (t/s)^2 t^{-1} \|_{L^\infty(\Hcal_s)}  \| s \overline{\del}_a \Gamma^I w \|_{L^2_f(\Hcal_s)}
\lesssim \big( C_1 \eps \big)^2 s^{2\delta}.
\endaligned
$$
In view of $| \overline{\del}_0 w| \lesssim C_1 \eps t^{-1}, | s \overline{\del}_a w | \lesssim C_1 \eps t^{-1} s^\delta, | P^{\gamma \alpha\beta} \Psi^0_\gamma \Psi^0_\alpha \Psi^0_\beta | \lesssim (t/s)$, we find
$$
\aligned
&  \| P^{\gamma \alpha\beta} \Psi^\eta_\gamma  \overline{\del}_\eta w  \Psi^{0}_\alpha (\overline{\del}_{0} \Psi^0_\beta) s \Gamma^I w \|_{L^2_f(\Hcal_s)}
+
\| P^{\gamma \alpha\beta} \Psi^\eta_\gamma  \overline{\del}_\eta w  \Psi^ 0_\alpha \Psi^0_\beta \Gamma^I w \|_{L^2_f(\Hcal_s)}
\\
&\lesssim
\| P^{\gamma \alpha\beta} \Psi^0_\gamma  \overline{\del}_0 w  \Psi^{0}_\alpha (\overline{\del}_{0} \Psi^0_\beta) s \Gamma^I w \|_{L^2_f(\Hcal_s)}
+
\| P^{\gamma \alpha\beta} \Psi^a_\gamma  \overline{\del}_a w  \Psi^{0}_\alpha (\overline{\del}_{0} \Psi^0_\beta) s \Gamma^I w \|_{L^2_f(\Hcal_s)}
\\
&+
\| P^{\gamma \alpha\beta} \Psi^0_\gamma  \overline{\del}_0 w  \Psi^ 0_\alpha \Psi^0_\beta \Gamma^I w \|_{L^2_f(\Hcal_s)}
+
\| P^{\gamma \alpha\beta} \Psi^a_\gamma  \overline{\del}_a w  \Psi^ 0_\alpha \Psi^0_\beta \Gamma^I w \|_{L^2_f(\Hcal_s)}
\\
&\lesssim
\| P^{\gamma \alpha\beta} \Psi^0_\gamma  \overline{\del}_0 w  \Psi^{0}_\alpha s^{-1} \Psi^0_\beta s \Gamma^I w \|_{L^2_f(\Hcal_s)}
+
\| P^{\gamma \alpha\beta} \Psi^0_\gamma  \overline{\del}_0 w  \Psi^{0}_\alpha \delta_{0\beta} t^{-1} s \Gamma^I w \|_{L^2_f(\Hcal_s)}
\\
&+
\| P^{\gamma \alpha\beta} \Psi^a_\gamma  \overline{\del}_a w  \Psi^{0}_\alpha s^{-1} \Psi^0_\beta s \Gamma^I w \|_{L^2_f(\Hcal_s)}
+
\| P^{\gamma \alpha\beta} \Psi^a_\gamma  \overline{\del}_a w  \Psi^{0}_\alpha \delta_{0\beta} t^{-1} s \Gamma^I w \|_{L^2_f(\Hcal_s)}
\\
&+
\| P^{\gamma \alpha\beta} \Psi^0_\gamma  \overline{\del}_0 w  \Psi^ 0_\alpha \Psi^0_\beta \Gamma^I w \|_{L^2_f(\Hcal_s)}
+
\| P^{\gamma \alpha\beta} \Psi^a_\gamma  \overline{\del}_a w  \Psi^ 0_\alpha \Psi^0_\beta \Gamma^I w \|_{L^2_f(\Hcal_s)}, 
\endaligned
$$
which is bounded by 
$$
\aligned
&\lesssim
\| P^{\gamma \alpha\beta} \Psi^0_\gamma  \overline{\del}_0 w  \Psi^{0}_\alpha s^{-1} \Psi^0_\beta s (t/s) \|_{L^\infty(\Hcal_s)}  \| (s/t) \Gamma^I w \|_{L^2_f(\Hcal_s)}
\\
&+
\| P^{\gamma \alpha\beta} \Psi^0_\gamma  \overline{\del}_0 w  \Psi^{0}_\alpha \delta_{0\beta} t^{-1} s (t/s) \|_{L^\infty(\Hcal_s)}  \| (s/t) \Gamma^I w \|_{L^2_f(\Hcal_s)}
\\
&+
\| P^{\gamma \alpha\beta} \Psi^a_\gamma  \overline{\del}_a w  \Psi^{0}_\alpha s^{-1} \Psi^0_\beta s (t/s) \|_{L^\infty(\Hcal_s)}  \| (s/t) \Gamma^I w \|_{L^2_f(\Hcal_s)}
\\
&+
\| P^{\gamma \alpha\beta} \Psi^a_\gamma  \overline{\del}_a w  \Psi^{0}_\alpha \delta_{0\beta} t^{-1} s (t/s) \|_{L^\infty(\Hcal_s)}  \| (s/t) \Gamma^I w \|_{L^2_f(\Hcal_s)}
\\
&+
\| P^{\gamma \alpha\beta} \Psi^0_\gamma  \overline{\del}_0 w  \Psi^ 0_\alpha \Psi^0_\beta (t/s) \|_{L^\infty(\Hcal_s)}  \| (s/t) \Gamma^I w \|_{L^2_f(\Hcal_s)}
\\
&+
\| P^{\gamma \alpha\beta} \Psi^a_\gamma  \overline{\del}_a w  \Psi^ 0_\alpha \Psi^0_\beta (t/s) \|_{L^\infty(\Hcal_s)}  \| (s/t) \Gamma^I w \|_{L^2_f(\Hcal_s)}
\\
&\lesssim C_1 \eps s^\delta \|(s/t) \Gamma^I w \|_{L^2_f(\Hcal_s)}
\lesssim \big( C_1 \eps \big)^2 s^{2\delta}.
\endaligned
$$
Recall that $| \overline{\del}_\alpha w| \lesssim t^{-1}$, and we find
$$
\aligned
&\| P^{\gamma \alpha\beta} \Psi^\eta_\gamma  \overline{\del}_\eta w  \Psi^{a}_\alpha (\overline{\del}_{a} \Psi^0_\beta) s \Gamma^I w \|_{L^2_f(\Hcal_s)}
\\
\lesssim
&\| P^{\gamma \alpha\beta} \Psi^\eta_\gamma  \overline{\del}_\eta w  \Psi^{a}_\alpha (\overline{\del}_{a} \Psi^0_\beta) s (t/s) \|_{L^\infty(\Hcal_s)} \| (s/t) \Gamma^I w \|_{L^2_f(\Hcal_s)}
\\
\lesssim
& C_1 \eps \| (t/s)  t^{-1}  s^{-1} s (t/s) \|_{L^\infty(\Hcal_s)} \| (s/t) \Gamma^I w \|_{L^2_f(\Hcal_s)}
\lesssim \big( C_1 \eps \big)^2 s^{\delta}.
\endaligned
$$
So we obtain
\bel{eq:conformal000}
\int_{\Hcal_{s}^*} \big|  Z^2 - \widehat{Z}^2  \big| \, dx
\lesssim
\big\| Z - \widehat{Z} \big\|_{L^2_f(\Hcal_s)} \big( \big\| Z - \widehat{Z} \big\|_{L^2_f(\Hcal_s)} + 2 \big\| \widehat{Z} \big\|_{L^2_f(\Hcal_s)} \big)
\lesssim
\big( C_1 \eps \big)^3 s^{4\delta}.
\ee
Then, we turn to bound the remaining part, and we get
$$
\aligned
\big| M_2 - \widehat{M_2} \big|
&\lesssim
\big| s^2 m^{\alpha\beta} \Psi^0_\alpha \Psi^0_\beta  P^{\gamma ab} \del_\gamma w \overline{\del}_a \Gamma^I w \overline{\del}_b \Gamma^I w \big|
+
\big| s^2 P^{\gamma \alpha\beta} \del_\gamma w \Psi^0_\alpha \Psi^0_\beta  m^{a b} \overline{\del}_a \Gamma^I w \overline{\del}_b \Gamma^I w \big|
\\
&+
\big| s^2 P^{\gamma \alpha\beta} \del_\gamma w \Psi^0_\alpha \Psi^0_\beta  P^{\gamma' ab} \del_{\gamma'} w \overline{\del}_a \Gamma^I w \overline{\del}_b \Gamma^I w \big|.
\endaligned
$$
Note $m^{\alpha \beta} \Psi^0_\alpha \Psi^0_\beta = -1$, and recall that $ |\del_\alpha w| \lesssim C_1 \eps s^{-1}, |\overline{\del}_\alpha w| \lesssim C_1 \eps t^{-1}$, as well as
$$
\big| P^{\gamma \alpha\beta} \del_\gamma w \Psi^0_\alpha \Psi^0_\beta \big|
\lesssim
\big| P^{\gamma \alpha\beta} \Psi^0_\gamma \overline{\del}_0 w \Psi^0_\alpha \Psi^0_\beta \big|
+
\big| P^{\gamma \alpha\beta} \Psi^a_\gamma \overline{\del}_a w \Psi^0_\alpha \Psi^0_\beta \big|
\lesssim
C_1 \eps,
$$
and in a similar way we find
\bel{eq:conformal001}
\| M_2 - \widehat{M_2} \|_{L^1_f(\Hcal_s)}
\lesssim C_1 \eps \sum_{a} \| s \overline{\del}^a \Gamma^I w \|_{L^2_f(\Hcal_s)}^2
\lesssim \big( C_1 \eps \big)^3 s^{4\delta}.
\ee
Finally, the combination of \eqref{eq:conformal000} and \eqref{eq:conformal001} yields the desired estimates in \eqref{eq:conformal-equ}. 
\end{proof}

\begin{proposition}\label{prop:improved3}
One has 
\bel{eq:improved3}
E_{con}(\Gamma^I w, s)^{1/2}
\lesssim \eps + (C_1 \eps)^{3/2} s^{2\delta},
\qquad
|I|  = N.
\ee
\end{proposition}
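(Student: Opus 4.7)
The plan is to apply the quasilinear conformal energy identity of Proposition \ref{prop:quasi-conformal} with $u=\Gamma^I w$, $|I|=N$. By Lemma \ref{lem:comm1}, this function satisfies the equation \eqref{eq:quasi-div}, namely $g^{\alpha\beta}\del_\alpha\del_\beta \Gamma^I w=h$ with
\be
h := - \sum_{\substack{|I_1|+|I_2|+d=|I|\\ d\geq 1}} N_d(\Gamma^{I_1}w,\Gamma^{I_2}w) - \sum_{\substack{|I_1|+|I_2|=|I|\\ |I_2|\leq N-1}} N(\Gamma^{I_1}w,\Gamma^{I_2}w).
\ee
Proposition \ref{prop:quasi-conformal} then expresses $\widetilde{E}_{con}(\Gamma^I w,s)$ as its initial value plus three spacetime integrals involving $\tau Z h$, $\tau Z M_1$, and $M_4$. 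Once each of these is shown to be $O(\eps^2 + (C_1\eps)^3 s^{4\delta})$, Lemma \ref{lem:conformal-equ} transfers the bound to $E_{con}(\Gamma^I w,s)$, and taking square roots yields \eqref{eq:improved3}.

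For the $\tau Z h$ contribution I would use Cauchy--Schwarz to obtain
\be
\int_{s_0}^s \int_{\Hcal_\tau^*} |\tau Z h|\,dx\,d\tau \leq \int_{s_0}^s \tau\, \|Z\|_{L^2_f(\Hcal_\tau)}\,\|h\|_{L^2_f(\Hcal_\tau)}\,d\tau.
\ee
From the definition of $\widetilde{E}_{con}$ together with Lemma \ref{lem:conformal-equ} and the bootstrap \eqref{eq:BA}, one has $\|Z\|_{L^2_f(\Hcal_\tau)}\lesssim \widetilde{E}_{con}(\Gamma^I w,\tau)^{1/2}\lesssim C_1\eps\,\tau^{2\delta}$. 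The source $h$ is estimated exactly as in the proof of Proposition \ref{prop:improved2}: Lemma \ref{lem:null-classical} extracts the null-form improvement (either one good derivative $\underdel_a$ or the $(s/t)^2$ weight), the lower-order factor (at most $N-2$ vector fields) is placed in $L^\infty$ by Lemma \ref{lem:BA1}, and the top-order factor in $L^2_f$ by Lemma \ref{lem:BA0}. This yields $\|h\|_{L^2_f(\Hcal_\tau)}\lesssim (C_1\eps)^2 \tau^{-2+2\delta}$, whence the $\tau Zh$ integral is $O((C_1\eps)^3 s^{4\delta})$ after integration.

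For the $\tau Z M_1$ and $M_4$ integrals, the key structural observation is that $g^{\alpha\beta}+m^{\alpha\beta}=P^{\gamma\alpha\beta}\del_\gamma w$ carries a small factor $|\del w|\lesssim C_1\eps\,s^{-1}$ by Lemma \ref{lem:BA1}, while the algebraic identities of Section \ref{subsect:useful}---in particular $|\Psi^0_\alpha|\lesssim t/s$, $|\overline{\del}_\gamma\Psi^0_\beta|\lesssim s^{-1}$, $|\del_s\Psi^0_\alpha|\lesssim t/s^2+1/t$, and the null-condition cancellations $|P^{\gamma\alpha\beta}\Psi^0_\gamma\Psi^0_\alpha\Psi^0_\beta|\lesssim t/s$---control every derivative of a product of $g$'s and $\Psi$'s. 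Pairing these bounds with the $L^2$ estimates $\|(s/t)\del_\alpha\Gamma^I w\|_{L^2_f(\Hcal_\tau)}\lesssim C_1\eps$, $\|(s/t)\Gamma^I w\|_{L^2_f(\Hcal_\tau)}\lesssim C_1\eps\,\tau^{2\delta}$ from Lemma \ref{lem:BA0}, and $\|s\,\overline{\del}_a\Gamma^I w\|_{L^2_f(\Hcal_\tau)}\lesssim E_{con}(\Gamma^I w,\tau)^{1/2}\lesssim C_1\eps\,\tau^{2\delta}$, every $\tau Z M_1$ term and every $M_4$ term is bounded by $(C_1\eps)^3\tau^{-1+4\delta}$, integrating to $O((C_1\eps)^3 s^{4\delta})$.

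The main obstacle is the combinatorial bookkeeping in $M_1$ and $M_4$: one must verify, summand by summand, that each algebraic contraction extracts either a factor $P^{\gamma\alpha\beta}\del_\gamma w$ of order $C_1\eps\,\tau^{-1}$ or an explicit inverse weight $s^{-1}$ (or $t^{-1}$) from $\Psi$--derivatives, and that the remaining $\Gamma^I w$--factors can be absorbed into one of the three $L^2$ norms above. The simultaneous requirement---that the null structure in $h$ deliver the integrable decay $\tau^{-2+2\delta}$ rather than the $\tau^{-1}$ that a generic two-dimensional quadratic nonlinearity would give---is the crucial gain provided by Lemma \ref{lem:null-classical} together with the hyperboloidal $(s/t)^2$ weight, and it is this interaction that is the real heart of the estimate.
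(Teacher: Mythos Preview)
Your proposal is correct and follows essentially the same route as the paper's proof: apply Proposition~\ref{prop:quasi-conformal} to $u=\Gamma^I w$, bound $\|Z\|_{L^2_f}$ via Lemma~\ref{lem:conformal-equ}, bound $\|h\|_{L^2_f}$ by the null-form analysis of Proposition~\ref{prop:improved2}, and control $M_1$, $M_4$ term by term using the algebraic identities of Section~\ref{subsect:useful}.

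One point of your exposition should be sharpened. You write that in $M_1$ and $M_4$ ``each algebraic contraction extracts either a factor $P^{\gamma\alpha\beta}\del_\gamma w$ \ldots\ or an explicit inverse weight $s^{-1}$ (or $t^{-1}$) from $\Psi$--derivatives.'' This ``either/or'' is not how the smallness is actually obtained. The paper's key step is to observe that if one replaces $g^{\alpha\beta}$ by $-m^{\alpha\beta}$ everywhere in the definitions of $M_1$ and $M_4$, the resulting expressions vanish \emph{identically} (this is just the fact that the flat-space conformal energy identity in Lemma~\ref{lem:EE} has no remainder). Hence $M_1=M_1[g]-M_1[-m]$ and $M_4=M_4[g]-M_4[-m]$, and after expanding every occurrence of $g^{\alpha\beta}=-m^{\alpha\beta}+P^{\gamma\alpha\beta}\del_\gamma w$, \emph{each} surviving summand carries at least one factor of $P^{\gamma\alpha\beta}\del_\gamma w$. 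Without this cancellation a term such as $s\,g^{\alpha\beta}\Psi^0_\alpha\Psi^0_\beta\,g^{ab}\,\overline{\del}_a\Gamma^I w\,\overline{\del}_b\Gamma^I w$ in $M_4$ would, in its Minkowski part, contribute $s\sum_a|\overline{\del}_a\Gamma^I w|^2\sim (C_1\eps)^2\tau^{-1+4\delta}$, which lacks the third factor of $C_1\eps$ that you claim. Once the Minkowski cancellation is stated, your bookkeeping and your use of $|P^{\gamma\alpha\beta}\Psi^0_\gamma\Psi^0_\alpha\Psi^0_\beta|\lesssim t/s$ together with the finer bound $|P^{\gamma\alpha\beta}\del_\gamma w\,\Psi^0_\alpha\Psi^0_\beta|\lesssim C_1\eps\,s^{-1+\delta}$ from Section~\ref{subsect:useful} go through exactly as in the paper.
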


\begin{proof}
By the estimates in Lemma \ref{lem:conformal-equ}, it suffices to show 
$$
\widetilde{E}_{con}(\Gamma^I w, s)
\lesssim \eps^2 + (C_1 \eps)^{3} s^{4\delta},
\qquad
|I|  = N.
$$
 
Recall the conformal energy estimates for quasilinear wave equations in Proposition \ref{prop:quasi-conformal} with $u = \Gamma^I$ with $|I| = N$, and we thus have
\be 
\widetilde{E}_{con} (\Gamma^I w, s)
=
\widetilde{E}_{con} (\Gamma^I w, s_0)
+
\int_{s_0}^s \int_{\Hcal_{\tau}^*} \tau Z h - \tau Z M_1 - M_4 \, dx d\tau.
\ee
in which
$$
h
=
- \sum_{\substack{|I_1| + |I_2| + d = N\\ d\geq 1}} N_d (\Gamma^{I_1} w, \Gamma^{I_2} w) - \sum_{\substack{|I_1| + |I_2|  = |I|\\ |I_2| \leq N-1}} N (\Gamma^{I_1} w, \Gamma^{I_2} w).
$$
and
$$
\aligned
Z 
&= 
g^{\alpha\beta} \Psi^0_\alpha \Psi^0_\beta s \del_s \Gamma^I w 
+ 2 g^{\alpha\beta} \Psi^0_\alpha \Psi^a_\beta s \overline{\del}_a \Gamma^I w 
+ g^{\alpha\beta} \Psi^\gamma_\alpha (\overline{\del}_\gamma \Psi^0_\beta) s \Gamma^I w
- g^{\alpha\beta} \Psi^ 0_\alpha \Psi^0_\beta \Gamma^I w,
\\
M_1
&=
 {1\over s} \del_s \big( g^{\alpha\beta} \Psi^ 0_\alpha \Psi^0_\beta \big)  \Gamma^I w   %- {1\over s} g^{\alpha\beta} \Psi^ 0_\alpha \Psi^0_\beta \del_s u
- \del_s (g^{\alpha\beta} \Psi^0_\alpha \Psi^0_\beta) \del_s \Gamma^I w 
-2 {1\over s} g^{\alpha\beta} \Psi^0_\alpha \Psi^a_\beta \overline{\del}_a \Gamma^I w
\\
& - 2 \del_s (g^{\alpha\beta} \Psi^0_\alpha \Psi^a_\beta ) \overline{\del}_a \Gamma^I w
-{1\over s} g^{\alpha\beta} \Psi^\gamma_\alpha (\overline{\del}_\gamma \Psi^0_\beta) \Gamma^I w
- \del_s \big( g^{\alpha\beta} \Psi^\gamma_\alpha  (\overline{\del}_\gamma \Psi^0_\beta) \big) \Gamma^I w,
\\
M_4
&=
{1\over 2} \del_s \big( s^2 g^{\alpha\beta} \Psi^0_\alpha \Psi^0_\beta  g^{a b} \big) \overline{\del}_a \Gamma^I w \overline{\del}_b \Gamma^I w
- s^2 \overline{\del}_a \big( g^{\alpha\beta} \Psi^0_\alpha \Psi^0_\beta  g^{a b}  \big)  \del_s \Gamma^I w \overline{\del}_b \Gamma^I w
\\
&
- 2 \overline{\del}_{a'} \big( s^2 g^{\alpha\beta} \Psi^0_\alpha \Psi^a_\beta g^{a' b'} \big) \overline{\del}_a \Gamma^I w \overline{\del}_{b'} \Gamma^I w
+ \overline{\del}_a \big( s^2 g^{\alpha\beta} \Psi^0_\alpha \Psi^a_\beta g^{a' b'} \big) \overline{\del}_{a'} \Gamma^I w \overline{\del}_{b'} \Gamma^I w,
\\
&
- \overline{\del}_a \big( s^2 g^{\alpha\beta} \Psi^\gamma_\alpha (\overline{\del}_\gamma \Psi^0_\beta) g^{a b} \big) \Gamma^I w \overline{\del}_b \Gamma^I w 
- s^2 g^{\alpha\beta} \Psi^\gamma_\alpha (\overline{\del}_\gamma \Psi^0_\beta) g^{a b} \overline{\del}_a \Gamma^I w \overline{\del}_b \Gamma^I w
\\
&
+ \overline{\del}_a \big(s g^{\alpha \beta} \Psi^0_\alpha \Psi^0_\beta  g^{ab} \big) \Gamma^I w \overline{\del}_b \Gamma^I w 
+ s g^{\alpha \beta} \Psi^0_\alpha \Psi^0_\beta  g^{ab} \overline{\del}_a \Gamma^I w \overline{\del}_b \Gamma^I w.
\endaligned
$$
From the proof of Proposition \ref{prop:improved2} we know 
$$
\| h \|_{L^2_f (\Hcal_s)}
\lesssim \big( C_1 \eps \big)^2 s^{-2+\delta},
$$
and from the proof of Lemma \ref{lem:conformal-equ} we know
$$
\| Z \|_{L^2_f (\Hcal_s)}
\lesssim C_1 \eps s^{2\delta}, 
$$
so we will only need to take care of the terms $M_1, M_4$.

\

\noindent\textbf{Estimates for $M_1$.}
For the term $M_1$, first note that
$$
\aligned
& 
{1\over s} \del_s \big( m^{\alpha\beta} \Psi^ 0_\alpha \Psi^0_\beta \big)  \Gamma^I w   %- {1\over s} g^{\alpha\beta} \Psi^ 0_\alpha \Psi^0_\beta \del_s u
- \del_s (m^{\alpha\beta} \Psi^0_\alpha \Psi^0_\beta) \del_s u 
-2 {1\over s} m^{\alpha\beta} \Psi^0_\alpha \Psi^a_\beta \overline{\del}_a \Gamma^I w
- 2 \del_s (m^{\alpha\beta} \Psi^0_\alpha \Psi^a_\beta ) \overline{\del}_a \Gamma^I w
\\
&-{1\over s} m^{\alpha\beta} \Psi^\gamma_\alpha (\overline{\del}_\gamma \Psi^0_\beta) \Gamma^I w
- \del_s \big( m^{\alpha\beta} \Psi^\gamma_\alpha  (\overline{\del}_\gamma \Psi^0_\beta) \big) \Gamma^I w = 0,
\endaligned
$$
and hence we have
$$
\aligned
M_1
&=
M_1
+
{1\over s} \del_s \big( m^{\alpha\beta} \Psi^ 0_\alpha \Psi^0_\beta \big)  \Gamma^I w   %- {1\over s} g^{\alpha\beta} \Psi^ 0_\alpha \Psi^0_\beta \del_s u
- \del_s (m^{\alpha\beta} \Psi^0_\alpha \Psi^0_\beta) \del_s \Gamma^I w 
-2 {1\over s} m^{\alpha\beta} \Psi^0_\alpha \Psi^a_\beta \overline{\del}_a \Gamma^I w
\\
&- 2 \del_s (m^{\alpha\beta} \Psi^0_\alpha \Psi^a_\beta ) \overline{\del}_a \Gamma^I w
-{1\over s} m^{\alpha\beta} \Psi^\gamma_\alpha (\overline{\del}_\gamma \Psi^0_\beta) \Gamma^I w
- \del_s \big( m^{\alpha\beta} \Psi^\gamma_\alpha  (\overline{\del}_\gamma \Psi^0_\beta) \big) \Gamma^I w
\\
&=
{1\over s} \del_s \big( P^{\gamma\alpha\beta} \del_\gamma w \Psi^ 0_\alpha \Psi^0_\beta \big)  \Gamma^I w   %- {1\over s} P^{\gamma\alpha\beta} \del_\gamma w \Psi^ 0_\alpha \Psi^0_\beta \del_s u
- \del_s (P^{\gamma\alpha\beta} \del_\gamma w \Psi^0_\alpha \Psi^0_\beta) \del_s \Gamma^I w 
-2 {1\over s} P^{\gamma\alpha\beta} \del_\gamma w \Psi^0_\alpha \Psi^a_\beta \overline{\del}_a \Gamma^I w
\\
&- 2 \del_s (P^{\gamma\alpha\beta} \del_\gamma w \Psi^0_\alpha \Psi^a_\beta ) \overline{\del}_a \Gamma^I w
-{1\over s} P^{\gamma\alpha\beta} \del_\gamma w \Psi^\gamma_\alpha (\overline{\del}_\gamma \Psi^0_\beta) \Gamma^I w
- \del_s \big( P^{\gamma\alpha\beta} \del_\gamma w \Psi^\gamma_\alpha  (\overline{\del}_\gamma \Psi^0_\beta) \big) \Gamma^I w.
\endaligned
$$
Recall that $|\del_s w| \lesssim C_1 \eps t^{-1}$, $|\overline{\del}_a w| \lesssim C_1 \eps s^{-1+\delta} t^{-1}$, $|\del \del w| \lesssim (t-r)^{-1} |\Gamma \del w | \lesssim C_1 \eps (t-r)^{-1} s^{-1}$, and we have
$$
\aligned
&
 \| {1\over s} \del_s \big( P^{\gamma \alpha\beta} \del_\gamma w \Psi^ 0_\alpha \Psi^0_\beta \big)  \Gamma^I w \|_{L^2_f(\Hcal_s)}
\\
&=
\| {1\over s} \del_s \big( P^{\gamma \alpha\beta} \Psi^\eta_\gamma \overline{\del}_\eta w \Psi^ 0_\alpha \Psi^0_\beta \big)  \Gamma^I w \|_{L^2_f(\Hcal_s)}
\\
&\lesssim
\| {1\over s} \del_s \big( P^{\gamma \alpha\beta} \Psi^0_\gamma \overline{\del}_0 w \Psi^ 0_\alpha \Psi^0_\beta \big)  \Gamma^I w \|_{L^2_f(\Hcal_s)}
+
\| {1\over s} \del_s \big( P^{\gamma \alpha\beta} \Psi^a_\gamma \overline{\del}_a w \Psi^ 0_\alpha \Psi^0_\beta \big)  \Gamma^I w \|_{L^2_f(\Hcal_s)}
\\
&\lesssim
C_1 \eps {1\over s^2}  \| (s/t) \Gamma^I w \|_{L^2_f(\Hcal_s)}
+
C_1 \eps {1\over s^{2-\delta}} \| (s/t) \Gamma^I w \|_{L^2_f(\Hcal_s)}
\lesssim \big( C_1 \eps  \big)^2 s^{-2+2\delta}.
\endaligned
$$

Recall next that $|\del_s w| \lesssim C_1 \eps t^{-1}$, $|\overline{\del}_a w| \lesssim C_1 \eps s^{-1+\delta} t^{-1}$, $|\del \del w| \lesssim C_1 \eps (t-r)^{-1} s^{-1}$, and we have
$$
\aligned
&
 \| \del_s (P^{\gamma \alpha\beta} \del_\gamma w \Psi^0_\alpha \Psi^0_\beta) \del_s \Gamma^I w \|_{L^2_f(\Hcal_s)}
\\
&=
\| \del_s (P^{\gamma \alpha\beta} \Psi^\eta_\gamma \overline{\del}_\eta w \Psi^0_\alpha \Psi^0_\beta) \del_s \Gamma^I w \|_{L^2_f(\Hcal_s)}
\\
&\lesssim
\| \del_s (P^{\gamma \alpha\beta} \Psi^0_\gamma \overline{\del}_0 w \Psi^0_\alpha \Psi^0_\beta) \del_s \Gamma^I w \|_{L^2_f(\Hcal_s)}
+
\| \del_s (P^{\gamma \alpha\beta} \Psi^a_\gamma \overline{\del}_a w \Psi^0_\alpha \Psi^0_\beta) \del_s \Gamma^I w \|_{L^2_f(\Hcal_s)}
\\
&\lesssim
C_1 \eps {1\over s^2}  \| \del_s \Gamma^I w \|_{L^2_f(\Hcal_s)}
+
C_1 \eps {1\over s^{2-\delta}} \| \del_s \Gamma^I w \|_{L^2_f(\Hcal_s)}
\lesssim
\big( C_1 \eps  \big)^2 s^{-2+\delta}.
\endaligned
$$
In view of $|\del w| \lesssim C_1 \eps s^{-1}$, we have
$$
\aligned
\| 2 {1\over s} P^{\gamma \alpha\beta} \del_\gamma w \Psi^0_\alpha \Psi^a_\beta \overline{\del}_a \Gamma^I w \|_{L^2_f(\Hcal_s)}
\lesssim
C_1 \eps {1\over s^2} \sum_a \| s \overline{\del}_a \Gamma^I w \|_{L^2_f(\Hcal_s)}
\lesssim \big( C_1 \eps  \big)^2 s^{-2+2\delta}.
\endaligned
$$

Since $|\del w| + |\del \del w| \lesssim s^{-1}$, we have
$$
\aligned
\|2 \del_s (P^{\gamma \alpha\beta} \del_\gamma w \Psi^0_\alpha \Psi^a_\beta ) \overline{\del}_a \Gamma^I w \|_{L^2_f(\Hcal_s)}
\lesssim
C_1 \eps {1\over s^2} \sum_a \| s \overline{\del}_a \Gamma^I w \|_{L^2_f(\Hcal_s)}
\lesssim \big( C_1 \eps  \big)^2 s^{-2+2\delta}.
\endaligned
$$
Recalling that $|\del_s w| \lesssim C_1 \eps t^{-1}$, $|\overline{\del}_a w| \lesssim C_1 \eps s^{-1+\delta} t^{-1}$, we have
$$
\aligned
&
 \| {1\over s} P^{\eta \alpha\beta} \del_\eta w \Psi^\gamma_\alpha (\overline{\del}_\gamma \Psi^0_\beta) \Gamma^I w \|_{L^2_f(\Hcal_s)}
\\
&\leq
\| {1\over s} P^{\eta \alpha\beta} \del_\eta w \Psi^0_\alpha (\overline{\del}_0 \Psi^0_\beta) \Gamma^I w \|_{L^2_f(\Hcal_s)}
+
\| {1\over s} P^{\eta \alpha\beta} \del_\eta w \Psi^a_\alpha (\overline{\del}_a \Psi^0_\beta) \Gamma^I w \|_{L^2_f(\Hcal_s)}
\\
&\leq 
\| {1\over s} P^{\eta \alpha\beta} \Psi^\gamma_\eta  \overline{\del}_\gamma w \Psi^0_\alpha (\overline{\del}_0 \Psi^0_\beta) \Gamma^I w \|_{L^2_f(\Hcal_s)}
+
\| {1\over s} P^{\eta \alpha\beta} \Psi^\gamma_\eta  \overline{\del}_\gamma w \Psi^a_\alpha (\overline{\del}_a \Psi^0_\beta) \Gamma^I w \|_{L^2_f(\Hcal_s)}
\\
&\lesssim
\| {1\over s} P^{\eta \alpha\beta} \Psi^0_\eta  \overline{\del}_0 w \Psi^0_\alpha (\overline{\del}_0 \Psi^0_\beta) \Gamma^I w \|_{L^2_f(\Hcal_s)}
+
\| {1\over s} P^{\eta \alpha\beta} \Psi^a_\eta  \overline{\del}_a w \Psi^0_\alpha (\overline{\del}_0 \Psi^0_\beta) \Gamma^I w \|_{L^2_f(\Hcal_s)}
\\
&+
\| {1\over s} P^{\eta \alpha\beta} \Psi^0_\eta  \overline{\del}_0 w \Psi^a_\alpha (\overline{\del}_a \Psi^0_\beta) \Gamma^I w \|_{L^2_f(\Hcal_s)}
+
\| {1\over s} P^{\eta \alpha\beta} \Psi^b_\eta  \overline{\del}_b w \Psi^a_\alpha (\overline{\del}_a \Psi^0_\beta) \Gamma^I w \|_{L^2_f(\Hcal_s)}
\\
&\lesssim
C_1 \eps {1\over s^2}  \| (s/t) \Gamma^I w \|_{L^2_f(\Hcal_s)}
+
C_1 \eps {1\over s^{2-\delta}} \| (s/t) \Gamma^I w \|_{L^2_f(\Hcal_s)}
\lesssim \big( C_1 \eps  \big)^2 s^{-2+2\delta}.
\endaligned
$$

From $|\del_s w| \lesssim C_1 \eps t^{-1}$, $|\overline{\del}_a w| \lesssim C_1 \eps s^{-1+\delta} t^{-1}$, $|\del \del w| \lesssim C_1 \eps (t-r)^{-1} s^{-1}$, we deduce that 
$$
\aligned
&
 \| \del_s \big( P^{\eta \alpha\beta} \del_\eta w \Psi^\gamma_\alpha  (\overline{\del}_\gamma \Psi^0_\beta) \big) \Gamma^I w \|_{L^2_f(\Hcal_s)}
\\
&\leq 
\| \del_s \big( P^{\eta \alpha\beta} \del_\eta w \Psi^0_\alpha  (\overline{\del}_0 \Psi^0_\beta) \big) \Gamma^I w \|_{L^2_f(\Hcal_s)}
+
\| \del_s \big( P^{\eta \alpha\beta} \del_\eta w \Psi^a_\alpha  (\overline{\del}_a \Psi^0_\beta) \big) \Gamma^I w \|_{L^2_f(\Hcal_s)}
\\
&\leq
\| \del_s \big( P^{\eta \alpha\beta} \Psi^\gamma_\eta  \overline{\del}_\gamma w \Psi^0_\alpha  (\overline{\del}_0 \Psi^0_\beta) \big) \Gamma^I w \|_{L^2_f(\Hcal_s)}
+
\| \del_s \big( P^{\eta \alpha\beta} \Psi^\gamma_\eta  \overline{\del}_\gamma w \Psi^a_\alpha  (\overline{\del}_a \Psi^0_\beta) \big) \Gamma^I w \|_{L^2_f(\Hcal_s)}
\\
&\leq
\| \del_s \big( P^{\eta \alpha\beta} \Psi^0_\eta  \overline{\del}_0 w \Psi^0_\alpha  (\overline{\del}_0 \Psi^0_\beta) \big) \Gamma^I w \|_{L^2_f(\Hcal_s)}
+
\| \del_s \big( P^{\eta \alpha\beta} \Psi^a_\eta  \overline{\del}_a w \Psi^0_\alpha  (\overline{\del}_0 \Psi^0_\beta) \big) \Gamma^I w \|_{L^2_f(\Hcal_s)}
\\
&+
\| \del_s \big( P^{\eta \alpha\beta} \Psi^0_\eta  \overline{\del}_0 w \Psi^a_\alpha  (\overline{\del}_a \Psi^0_\beta) \big) \Gamma^I w \|_{L^2_f(\Hcal_s)}
+
\| \del_s \big( P^{\eta \alpha\beta} \Psi^b_\eta  \overline{\del}_b w \Psi^a_\alpha  (\overline{\del}_a \Psi^0_\beta) \big) \Gamma^I w \|_{L^2_f(\Hcal_s)}
\\
&\lesssim
C_1 \eps {1\over s^2}  \| (s/t) \Gamma^I w \|_{L^2_f(\Hcal_s)}
+
C_1 \eps {1\over s^{2-\delta}} \| (s/t) \Gamma^I w \|_{L^2_f(\Hcal_s)}
\lesssim \big( C_1 \eps  \big)^2 s^{-2+2\delta},
\endaligned
$$
in which we used the observations
$$
\del_s \del_s \Psi^0_\alpha
=
{2\over s^2} \Psi^0_\alpha - \delta_{0\alpha} {1\over s t} - \delta_{0 \alpha} {s\over t^3}
$$
and
$$
\big| \del_s \del_s w \big|
=
\big| {1\over t} \del_t w + {s^2 \over t^2} \del_t \del_t w - {s^2\over t^3} \del_t w \big|
\lesssim C_1 \eps t^{-1} s^{-1},
\qquad
\big| \del_s \overline{\del}_a w \big|
=
\big| {s\over t^2} \del_t L_a w - {s\over t^3} L_a w \big|
\lesssim C_1 \eps t^{-2}.
$$
Gathering the estimates leads us to
\be 
\int_{s_0}^s \int_{\Hcal_{\tau}^*} | \tau Z M_1 | \, dxd\tau
\lesssim
\int_{s_0}^s \tau \| Z \|_{L^2_f(\Hcal_\tau)}   \| M_1 \|_{L^2_f(\Hcal_\tau)} \, d\tau
\lesssim
\big( C_1 \eps \big)^3 \int_{s_0}^s \tau^{-1+2\delta} \, d\tau
\lesssim
\big( C_1 \eps \big)^3 s^{2\delta}.
\ee

\

\noindent\textbf{Estimates for $M_4$.}
Note that 
$$
\aligned
&
{1\over 2} \del_s \big( s^2 (-m^{\alpha\beta}) \Psi^0_\alpha \Psi^0_\beta  (-m^{a b}) \big) \overline{\del}_a u \overline{\del}_b \Gamma^I w
- s^2 \overline{\del}_a \big( (-m^{\alpha\beta}) \Psi^0_\alpha \Psi^0_\beta (- m^{a b} ) \big)  \del_s u \overline{\del}_b \Gamma^I w
\\
&- 2 \overline{\del}_{a'} \big( s^2 (-m^{\alpha\beta}) \Psi^0_\alpha \Psi^a_\beta (-m^{a' b'}) \big) \overline{\del}_a u \overline{\del}_{b'} \Gamma^I w
+ \overline{\del}_a \big( s^2 (-m^{\alpha\beta}) \Psi^0_\alpha \Psi^a_\beta (-m^{a' b'}) \big) \overline{\del}_{a'} u \overline{\del}_{b'} \Gamma^I w,
\\
&- \overline{\del}_a \big( s^2 (-m^{\alpha\beta}) \Psi^\gamma_\alpha (\overline{\del}_\gamma \Psi^0_\beta) (-m^{a b}) \big) u \overline{\del}_b \Gamma^I w 
- s^2 (-m^{\alpha\beta}) \Psi^\gamma_\alpha (\overline{\del}_\gamma \Psi^0_\beta) (-m^{a b}) \overline{\del}_a u \overline{\del}_b \Gamma^I w
\\
&+ \overline{\del}_a \big(s (-m^{\alpha \beta}) \Psi^0_\alpha \Psi^0_\beta  (-m^{ab}) \big) u \overline{\del}_b \Gamma^I w 
+ s (-m^{\alpha \beta}) \Psi^0_\alpha \Psi^0_\beta  (-m^{ab}) \overline{\del}_a u \overline{\del}_b \Gamma^I w
=0,
\endaligned
$$
which means that 
$$
\aligned
M_4
&=
M_4 
- \Big({1\over 2} \del_s \big( s^2 m^{\alpha\beta} \Psi^0_\alpha \Psi^0_\beta  m^{a b} \big) \overline{\del}_a \Gamma^I w \overline{\del}_b \Gamma^I w
- s^2 \overline{\del}_a \big( m^{\alpha\beta} \Psi^0_\alpha \Psi^0_\beta  m^{a b}  \big)  \del_s \Gamma^I w \overline{\del}_b \Gamma^I w
\\
&- 2 \overline{\del}_{a'} \big( s^2 m^{\alpha\beta} \Psi^0_\alpha \Psi^a_\beta m^{a' b'} \big) \overline{\del}_a \Gamma^I w \overline{\del}_{b'} \Gamma^I w
+ \overline{\del}_a \big( s^2 m^{\alpha\beta} \Psi^0_\alpha \Psi^a_\beta m^{a' b'} \big) \overline{\del}_{a'} \Gamma^I w \overline{\del}_{b'} \Gamma^I w,
\\
&- \overline{\del}_a \big( s^2 m^{\alpha\beta} \Psi^\gamma_\alpha (\overline{\del}_\gamma \Psi^0_\beta) m^{a b} \big) \Gamma^I w \overline{\del}_b \Gamma^I w 
- s^2 m^{\alpha\beta} \Psi^\gamma_\alpha (\overline{\del}_\gamma \Psi^0_\beta) m^{a b} \overline{\del}_a \Gamma^I w \overline{\del}_b \Gamma^I w 
\\
&+ \overline{\del}_a \big(s m^{\alpha \beta} \Psi^0_\alpha \Psi^0_\beta  m^{ab} \big) \Gamma^I w \overline{\del}_b \Gamma^I w 
+ s m^{\alpha \beta} \Psi^0_\alpha \Psi^0_\beta  m^{ab} \overline{\del}_a \Gamma^I w \overline{\del}_b \Gamma^I w \Big) 
\endaligned
$$
and, after cancellation, 
$$
\aligned
M_4
= H_1 + H_2 + H_3 + H_4 + H_5 + H_6 + H_7 + H_8,
\endaligned
$$
with
$$
\aligned
H_1
&=
{1\over 2} \Big( \del_s \big( s^2 P^{\gamma \alpha\beta} \del_\gamma w \Psi^0_\alpha \Psi^0_\beta  \big(-m^{a b}\big) \big) 
+ \del_s \big( s^2 \big(-m^{\alpha\beta}\big) \Psi^0_\alpha \Psi^0_\beta  P^{\gamma a b} \del_\gamma w \big)
\\
&+ \del_s \big( s^2 P^{\gamma \alpha\beta} \del_\gamma w \Psi^0_\alpha \Psi^0_\beta  P^{\gamma' a b} \del_{\gamma'} w \big)
\Big) \overline{\del}_a \Gamma^I w \overline{\del}_b \Gamma^I w,
\\
H_2
&=- s^2 \Big( \overline{\del}_a \big( P^{\gamma \alpha\beta} \del_\gamma w \Psi^0_\alpha \Psi^0_\beta \big(-m^{a b}\big) \big) 
+ \overline{\del}_a \big( \big(-m^{\alpha\beta}\big) \Psi^0_\alpha \Psi^0_\beta P^{\gamma a b} \del_\gamma w \big) 
\\
&+ \overline{\del}_a \big( P^{\gamma \alpha\beta} \del_\gamma w \Psi^0_\alpha \Psi^0_\beta  P^{\gamma' a b} \del_{\gamma'} w  \big) 
\Big) \del_s \Gamma^I w \overline{\del}_b \Gamma^I w,
\\
H_3
&= - 2 \Big( \overline{\del}_{a'} \big( s^2 P^{\gamma \alpha\beta} \del_\gamma w \Psi^0_\alpha \Psi^a_\beta \big(-m^{a' b'}\big) \big)
+ \overline{\del}_{a'} \big( s^2 \big(-m^{\alpha\beta}\big) \Psi^0_\alpha \Psi^a_\beta P^{\gamma a' b'} \del_\gamma w \big)
\\
&+ \overline{\del}_{a'} \big( s^2 P^{\gamma \alpha\beta} \del_\gamma w \Psi^0_\alpha \Psi^a_\beta P^{\gamma' a' b'} \del_{\gamma'} w \big) 
\Big) \overline{\del}_a \Gamma^I w \overline{\del}_{b'} \Gamma^I w,
\\
H_4
& = \Big( \overline{\del}_a \big( s^2 P^{\gamma \alpha\beta} \del_\gamma w \Psi^0_\alpha \Psi^a_\beta \big(-m^{a' b'}\big) \big)
+ \overline{\del}_a \big( s^2 \big(-m^{\alpha\beta}\big) \Psi^0_\alpha \Psi^a_\beta P^{\gamma a' b'} \del_\gamma w \big)
\\
&+ \overline{\del}_a \big( s^2 P^{\gamma \alpha\beta} \del_\gamma w \Psi^0_\alpha \Psi^a_\beta P^{\gamma' a' b'} \del_{\gamma'} w \big)
\Big) \overline{\del}_{a'} \Gamma^I w \overline{\del}_{b'} \Gamma^I w,
\endaligned
$$
with also 
$$
\aligned
H_5
&=  - \Big( \overline{\del}_a \big( s^2 P^{\gamma \alpha\beta} \del_\gamma w \Psi^{\gamma'}_\alpha (\overline{\del}_{\gamma'} \Psi^0_\beta) \big(-m^{a b}\big) \big)
+ \overline{\del}_a \big( s^2 \big(-m^{\alpha\beta}\big) \Psi^\gamma_\alpha (\overline{\del}_\gamma \Psi^0_\beta) P^{\gamma' a b} \del_{\gamma'} w \big)
\\
&+ \overline{\del}_a \big( s^2 P^{\gamma \alpha\beta} \del_\gamma w \Psi^{\gamma'}_\alpha (\overline{\del}_{\gamma'} \Psi^0_\beta) P^{\gamma'' a b} \del_{\gamma''} w \big)
\Big) \Gamma^I w \overline{\del}_b \Gamma^I w, 
\\
H_6
&= - s^2 \Big( P^{\gamma \alpha\beta} \del_\gamma w \Psi^{\gamma'}_\alpha (\overline{\del}_{\gamma'} \Psi^0_\beta) \big(-m^{a b}\big)
+ \big(-m^{\alpha\beta}\big) \Psi^\gamma_\alpha (\overline{\del}_\gamma \Psi^0_\beta) P^{\gamma' a b} \del_{\gamma'} w
\\
&+ P^{\gamma \alpha\beta} \del_\gamma w \Psi^{\gamma'}_\alpha (\overline{\del}_{\gamma'} \Psi^0_\beta) P^{\gamma'' a b} \del_{\gamma''} w 
\Big) \overline{\del}_a \Gamma^I w \overline{\del}_b \Gamma^I w,
\endaligned
$$
$$
\aligned
H_7
& = \Big( \overline{\del}_a \big(s P^{\gamma \alpha\beta} \del_\gamma w \Psi^0_\alpha \Psi^0_\beta  \big(-m^{a b}\big) \big) 
+ \overline{\del}_a \big(s \big(-m^{\alpha\beta}\big) \Psi^0_\alpha \Psi^0_\beta  P^{\gamma a b} \del_\gamma w \big) 
\\
&+ \overline{\del}_a \big(s P^{\gamma \alpha\beta} \del_\gamma w \Psi^0_\alpha \Psi^0_\beta  P^{\gamma' a b} \del_{\gamma'} w \big) 
\Big) \Gamma^I w \overline{\del}_b \Gamma^I w, 
\\
H_8
& = s \Big( P^{\gamma \alpha\beta} \del_\gamma w \Psi^0_\alpha \Psi^0_\beta  \big(-m^{a b}\big)
+ \big(-m^{\alpha\beta}\big) \Psi^0_\alpha \Psi^0_\beta  P^{\gamma a b} \del_\gamma w
\\
&+ P^{\gamma \alpha\beta} \del_\gamma w \Psi^0_\alpha \Psi^0_\beta  P^{\gamma' a b} \del_{\gamma'} w
\Big) \overline{\del}_a \Gamma^I w \overline{\del}_b \Gamma^I w.
\endaligned
$$

We will only estimate the representative terms $H_1, H_7$, since the others can be bounded in a similar way. 

\paragraph{--Estimates for the term $H_1$}
For the term $H_1$, recall that $\del_\gamma w = \Psi^{\gamma'}_\gamma \overline{\del}_{\gamma'} w, m^{\alpha\beta} \Psi^0_\alpha \Psi^0_\beta = -1$, and we have
$$
\aligned
&
{1\over 2} \Big\| \del_s \big( s^2 P^{\gamma \alpha\beta} \del_\gamma w \Psi^0_\alpha \Psi^0_\beta  \big(-m^{a b}\big) \big) 
+ \del_s \big( s^2 \big(-m^{\alpha\beta}\big) \Psi^0_\alpha \Psi^0_\beta  P^{\gamma a b} \del_\gamma w \big)
\\
&+ \del_s \big( s^2 P^{\gamma \alpha\beta} \del_\gamma w \Psi^0_\alpha \Psi^0_\beta  P^{\gamma' a b} \del_{\gamma'} w \big)
\Big\|_{L^\infty (\Hcal_s)}
\\
=
& 
{1\over 2} \Big\| \del_s \big( s^2 P^{\gamma \alpha\beta} \Psi^{\gamma'}_\gamma \overline{\del}_{\gamma'} w \Psi^0_\alpha \Psi^0_\beta  \big(-m^{a b}\big) \big) 
+ \del_s \big( s^2 P^{\gamma a b} \del_\gamma w \big)
\\
&+ \del_s \big( s^2 P^{\gamma \alpha\beta} \Psi^{\gamma'}_\gamma \overline{\del}_{\gamma'} w \Psi^0_\alpha \Psi^0_\beta  P^{\gamma'' a b} \del_{\gamma''} w \big)
\Big\|_{L^\infty (\Hcal_s)}.
\endaligned
$$
We proceed to estimate each part, and we get
$$
\aligned
& 
\Big\| \del_s \big( s^2 P^{\gamma \alpha\beta} \Psi^{\gamma'}_\gamma \overline{\del}_{\gamma'} w \Psi^0_\alpha \Psi^0_\beta  \big(-m^{a b}\big) \big) \Big\|_{L^\infty (\Hcal_s)}
\\
&\lesssim
\Big\| \del_s \big( s^2 P^{\gamma \alpha\beta} \Psi^{0}_\gamma \overline{\del}_{0} w \Psi^0_\alpha \Psi^0_\beta  \big) \Big\|_{L^\infty (\Hcal_s)}
+
\Big\| \del_s \big( s^2 P^{\gamma \alpha\beta} \Psi^{a}_\gamma \overline{\del}_{a} w \Psi^0_\alpha \Psi^0_\beta  \big) \Big\|_{L^\infty (\Hcal_s)}
\\
&\lesssim
\Big\| \big|2 s P^{\gamma \alpha\beta} \Psi^{0}_\gamma \overline{\del}_{0} w \Psi^0_\alpha \Psi^0_\beta   \big|
+
\big|  s^2 P^{\gamma \alpha\beta} \Psi^{0}_\gamma \del_s \overline{\del}_{0} w \Psi^0_\alpha \Psi^0_\beta  \big|
+
\big| \del_s \big( \Psi^{0}_\gamma \Psi^0_\alpha \Psi^0_\beta  \big)  s^2 P^{\gamma \alpha\beta} \overline{\del}_{0} w \big| \Big\|_{L^\infty (\Hcal_s)}
\\
&+
\Big\| \big| 2 s P^{\gamma \alpha\beta} \Psi^{a}_\gamma \overline{\del}_{a} w \Psi^0_\alpha \Psi^0_\beta  \big|
+
\big|  s^2 P^{\gamma \alpha\beta} \Psi^{a}_\gamma \del_s \overline{\del}_{a} w \Psi^0_\alpha \Psi^0_\beta  \big|
+
\big| s^2 P^{\gamma \alpha\beta} \Psi^{a}_\gamma \overline{\del}_{a} w \del_s \big( \Psi^0_\alpha \Psi^0_\beta  \big) \big| \Big\|_{L^\infty (\Hcal_s)}.
\endaligned
$$
Successively, the estimates $|P^{\gamma \alpha\beta} \Psi^{0}_\gamma \Psi^0_\alpha \Psi^0_\beta | \lesssim (t/s), |\overline{\del}_{0} w| \lesssim C_1 \eps t^{-1}$ imply
$$
\big\|2 s P^{\gamma \alpha\beta} \Psi^{0}_\gamma \overline{\del}_{0} w \Psi^0_\alpha \Psi^0_\beta   \big\|
\lesssim C_1 \eps, 
$$
and the estimates $|\del_s \overline{\del}_{0} w| \lesssim C_1 \eps t^{-1} s^{-1}$ yield
$$
\big|  s^2 P^{\gamma \alpha\beta} \Psi^{0}_\gamma \del_s \overline{\del}_{0} w \Psi^0_\alpha \Psi^0_\beta  \big|
\lesssim C_1 \eps, 
$$
and the estimates $\del_s \Psi^0_\alpha = - s^{-1} \Psi^0_\alpha + \delta_{0 \alpha} t^{-1}$ give
$$
\big| \del_s \big( \Psi^{0}_\gamma \Psi^0_\alpha \Psi^0_\beta  \big)  s^2 P^{\gamma \alpha\beta} \overline{\del}_{0} w \big\|_{L^\infty (\Hcal_s)}
\lesssim C_1 \eps.
$$
Furthermore, the bounds $|\overline{\del}_{a} w| \lesssim C_1 \eps t^{-1} s^{-1+\delta}$ deduce
$$
\big\| 2 s P^{\gamma \alpha\beta} \Psi^{a}_\gamma \overline{\del}_{a} w \Psi^0_\alpha \Psi^0_\beta  \big\|_{L^\infty (\Hcal_s)}
\lesssim C_1 \eps s^\delta,
$$
and the bounds $|\del_s \overline{\del}_{a} w| \lesssim C_1 \eps t^{-2}$ indicate
$$
\big\|  s^2 P^{\gamma \alpha\beta} \Psi^{a}_\gamma \del_s \overline{\del}_{a} w \Psi^0_\alpha \Psi^0_\beta  \big\|_{L^\infty (\Hcal_s)}
\lesssim C_1 \eps s^\delta,
$$
and the bounds $\del_s \Psi^0_\alpha = - s^{-1} \Psi^0_\alpha + \delta_{0 \alpha} t^{-1}$ lead us to
$$
\big\| s^2 P^{\gamma \alpha\beta} \Psi^{a}_\gamma \overline{\del}_{a} w \del_s \big( \Psi^0_\alpha \Psi^0_\beta  \big) \big\|_{L^\infty (\Hcal_s)}
\lesssim C_1 \eps s^\delta.
$$
Next, we have
$$
\aligned
\big\| \del_s \big( s^2 P^{\gamma a b} \del_\gamma w \big) \big\|_{L^\infty (\Hcal_s)}
\lesssim
\big\| s \del w \big\|_{L^\infty (\Hcal_s)}
+
\big\| s^2 \del_s \del w \big\|_{L^\infty (\Hcal_s)}
\lesssim
C_1 \eps,
\endaligned
$$
in which we used the bounds
$$
|\del w| 
\lesssim C_1 \eps s^{-1},
\qquad
|\del_s \del w | 
\lesssim |(s/t) \del \del w |
\lesssim |(s/t) (t-r)^{-1} \Gamma \del w|
\lesssim C_1 \eps s^{-2}.
$$
Then we turn to 
$$ 
\aligned
&\big\| \del_s \big( s^2 P^{\gamma \alpha\beta} \Psi^{\gamma'}_\gamma \overline{\del}_{\gamma'} w \Psi^0_\alpha \Psi^0_\beta  P^{\gamma'' a b} \del_{\gamma''} w \big) \big\|_{L^\infty (\Hcal_s)}
\\
\lesssim
&\big\| \del_s \big( s^2 P^{\gamma \alpha\beta} \Psi^{\gamma'}_\gamma \overline{\del}_{\gamma'} w \Psi^0_\alpha \Psi^0_\beta \big)  P^{\gamma'' a b} \del_{\gamma''} w  \big\|_{L^\infty (\Hcal_s)}
+
\big\|  s^2 P^{\gamma \alpha\beta} \del_{\gamma} w \Psi^0_\alpha \Psi^0_\beta \del_s \big( P^{\gamma'' a b} \del_{\gamma''} w \big) \big\|_{L^\infty (\Hcal_s)},
\endaligned
$$
and we find
$$
\aligned
&\big\| \del_s \big( s^2 P^{\gamma \alpha\beta} \Psi^{\gamma'}_\gamma \overline{\del}_{\gamma'} w \Psi^0_\alpha \Psi^0_\beta \big)  P^{\gamma'' a b} \del_{\gamma''} w  \big\|_{L^\infty (\Hcal_s)}
\\
\lesssim
& C_1 \eps s^{-1} \big\| \del_s \big( s^2 P^{\gamma \alpha\beta} \Psi^{\gamma'}_\gamma \overline{\del}_{\gamma'} w \Psi^0_\alpha \Psi^0_\beta \big) \big\|_{L^\infty (\Hcal_s)}
\lesssim
\big(C_1 \eps \big)^2 s^{-1+\delta},
\endaligned
$$
and (recall $P^{\gamma \alpha\beta} \del_{\gamma} w \Psi^0_\alpha \Psi^0_\beta \lesssim C_1 \eps s^{-1+\delta}$)
$$
\aligned
\big\|  s^2 P^{\gamma \alpha\beta} \del_{\gamma} w \Psi^0_\alpha \Psi^0_\beta \del_s \big( P^{\gamma'' a b} \del_{\gamma''} w \big) \big\|_{L^\infty (\Hcal_s)}
\lesssim \big( C_1 \eps \big)^2 s^\delta,
\endaligned
$$
which lead to
$$
\big\| \del_s \big( s^2 P^{\gamma \alpha\beta} \Psi^{\gamma'}_\gamma \overline{\del}_{\gamma'} w \Psi^0_\alpha \Psi^0_\beta  P^{\gamma'' a b} \del_{\gamma''} w \big) \big\|_{L^\infty (\Hcal_s)}
\lesssim
\big( C_1 \eps \big)^2 s^\delta.
$$
Thus we obtain
\be
\aligned
\big\| H_1 \|_{L^2_f (\Hcal_s)}
\lesssim C_1 \eps s^\delta \sum_a \big\| \overline{\del}_a \Gamma^I w \big\|^2_{L^2_f (\Hcal_s)}
\lesssim \big( C_1 \eps \big)^3 s^{-2+5\delta},
\endaligned
\ee
which is an integrable quantity provided $\delta \ll 1$.

\paragraph{--Estimates for the term $H_7$}
For the term $H_7$, recall again that $\del_\gamma w = \Psi^{\gamma'}_\gamma \overline{\del}_{\gamma'} w, m^{\alpha\beta} \Psi^0_\alpha \Psi^0_\beta = -1$, and we have
$$
\aligned
&\Big\|
(t/s) \overline{\del}_a \big(s P^{\gamma \alpha\beta} \del_\gamma w \Psi^0_\alpha \Psi^0_\beta  \big(-m^{a b}\big) \big) 
+ (t/s) \overline{\del}_a \big(s \big(-m^{\alpha\beta}\big) \Psi^0_\alpha \Psi^0_\beta  P^{\gamma a b} \del_\gamma w \big) 
\\
+
& (t/s) \overline{\del}_a \big(s P^{\gamma \alpha\beta} \del_\gamma w \Psi^0_\alpha \Psi^0_\beta  P^{\gamma' a b} \del_{\gamma'} w \big) \Big\|_{L^\infty (\Hcal_s)}
\\=
&\Big\|
(t/s) \overline{\del}_a \big(s P^{\gamma \alpha\beta} \Psi^{\gamma'}_\gamma \overline{\del}_{\gamma'} w \Psi^0_\alpha \Psi^0_\beta  \big(-m^{a b}\big) \big) 
+ (t/s) \overline{\del}_a \big(s  P^{\gamma a b} \del_\gamma w \big) 
\\
+
& (t/s) \overline{\del}_a \big(s P^{\gamma \alpha\beta} \Psi^{\gamma'}_\gamma \overline{\del}_{\gamma'} w \Psi^0_\alpha \Psi^0_\beta  P^{\gamma'' a b} \del_{\gamma''} w \big)  \Big\|_{L^\infty (\Hcal_s)}
\endaligned
$$
We start with
$$
\aligned
&\Big\|
(t/s) \overline{\del}_a \big(s P^{\gamma \alpha\beta} \Psi^{\gamma'}_\gamma \overline{\del}_{\gamma'} w \Psi^0_\alpha \Psi^0_\beta  \big(-m^{a b}\big) \big) 
\Big\|_{L^\infty (\Hcal_s)}
\\
\lesssim
&\big\|
(t/s) \overline{\del}_a \big(s P^{\gamma \alpha\beta} \Psi^{0}_\gamma \overline{\del}_{0} w \Psi^0_\alpha \Psi^0_\beta  \big) 
 \big\|_{L^\infty (\Hcal_s)}
+
\big\|
(t/s) \overline{\del}_a \big(s P^{\gamma \alpha\beta} \Psi^{a'}_\gamma \overline{\del}_{a'} w \Psi^0_\alpha \Psi^0_\beta \big) 
 \big\|_{L^\infty (\Hcal_s)}.
\endaligned
$$
On one hand, we have, by recalling $|\overline{\del}_a \big( P^{\gamma \alpha\beta} \Psi^{0}_\gamma  \Psi^0_\alpha \Psi^0_\beta  \big) | \lesssim s^{-1}$, that
$$
\aligned
&\big\|
(t/s) \overline{\del}_a \big(s P^{\gamma \alpha\beta} \Psi^{0}_\gamma \overline{\del}_{0} w \Psi^0_\alpha \Psi^0_\beta  \big) 
 \big\|_{L^\infty (\Hcal_s)}
\\
\lesssim
& \big\|
(t/s) s \overline{\del}_a \big( P^{\gamma \alpha\beta} \Psi^{0}_\gamma  \Psi^0_\alpha \Psi^0_\beta  \big) \overline{\del}_{0} w
 \big\|_{L^\infty (\Hcal_s)}
+
\big\|
(t/s) s P^{\gamma \alpha\beta} \Psi^{0}_\gamma  \overline{\del}_a \overline{\del}_{0} w \Psi^0_\alpha \Psi^0_\beta
 \big\|_{L^\infty (\Hcal_s)} 
\\
\lesssim
& C_1 \eps s^{-1}.  
\endaligned
$$ 
On the other hand, we get
$$
\aligned
&\big\|
(t/s) \overline{\del}_a \big(s P^{\gamma \alpha\beta} \Psi^{a'}_\gamma \overline{\del}_{a'} w \Psi^0_\alpha \Psi^0_\beta \big) 
 \big\|_{L^\infty (\Hcal_s)}
\\
\lesssim
&\big\|
(t/s)  s  \overline{\del}_a \overline{\del}_{a'} w \Psi^0_\alpha \Psi^0_\beta 
 \big\|_{L^\infty (\Hcal_s)}
+  
\big\|
(t/s) s \overline{\del}_{a'} w \overline{\del}_a \big( \Psi^0_\alpha \Psi^0_\beta \big) 
 \big\|_{L^\infty (\Hcal_s)}
\\
\lesssim 
& C_1 \eps s^{-1+\delta}. 
\endaligned
$$
Next, we estimate
$$
\aligned
\big\| (t/s) \overline{\del}_a \big(s  P^{\gamma a b} \del_\gamma w \big)  \big\|_{L^\infty (\Hcal_s)}
\lesssim
\big\| t \overline{\del}_a \del w  \big\|_{L^\infty (\Hcal_s)}
\lesssim
C_1 \eps s^{-1}.
\endaligned
$$
Then, we find
$$
\aligned
&\big\|
(t/s) \overline{\del}_a \big(s P^{\gamma \alpha\beta} \Psi^{\gamma'}_\gamma \overline{\del}_{\gamma'} w \Psi^0_\alpha \Psi^0_\beta  P^{\gamma'' a b} \del_{\gamma''} w \big) \big\|_{L^\infty (\Hcal_s)}
\\
\lesssim
&\big\|
t \overline{\del}_a \big( P^{\gamma \alpha\beta} \del_{\gamma} w \Psi^0_\alpha \Psi^0_\beta \big)  \del w  \big\|_{L^\infty (\Hcal_s)}
+
\big\|
t  P^{\gamma \alpha\beta} \del_{\gamma} w \Psi^0_\alpha \Psi^0_\beta  \overline{\del}_a \del w  \big\|_{L^\infty (\Hcal_s)}
\\
\lesssim
& C_1 \eps s^{-2+\delta}.
\endaligned
$$
Thus we have
\be
\aligned
\big\| H_7 \|_{L^2_f (\Hcal_s)}
\lesssim C_1 \eps s^{-2+4\delta} \sum_a \big\| (s/t) \Gamma^I w \big\|_{L^2_f (\Hcal_s)}  \big\| s \overline{\del}_a \Gamma^I w \big\|_{L^2_f (\Hcal_s)}
\lesssim \big( C_1 \eps \big)^3 s^{-2+4\delta},
\endaligned
\ee
which is integrable as long as $\delta \ll 1$.

Tedious but similar computations allow us to get
\be 
\int_{s_0}^s \int_{\Hcal_{\tau}^*} | M_4| \, dx d\tau
\lesssim  \big( C_1 \eps \big)^3 s^{4\delta}.
\ee
Gathering the above estimates, we finally arrive at the desired conclusion
$$
\widetilde{E}_{con}(\Gamma^I w, s)
\lesssim \eps^2 + (C_1 \eps)^3 s^{4\delta},
\qquad
|I|  \leq N.
$$
\end{proof}

%=========================

% Now we can provide the proof for the main theorem.

\begin{proof}[Proof of Theorem \ref{thm:main}]
According to the improved estimates in Proposition \ref{prop:improved1} and Proposition \ref{prop:improved2}, if we choose $C_1$ sufficiently large, and $\eps$ sufficiently small (such that $C_1 \eps \ll 1/2$), then we are led to
\bel{eq:improved}
\aligned
E (\Gamma^I w, s)^{1/2}
&\leq {1\over 2} C_1 \eps,
\qquad
&|I| \leq N,
\\
E_{con} (\Gamma^I w, s)^{1/2}
&\leq {1\over 2} C_1 \eps s^\delta,
\qquad
&|I| \leq N-1,
\\
E_{con} (\Gamma^I w, s)^{1/2}
&\leq {1\over 2} C_1 \eps s^{2 \delta},
\qquad
&|I| \leq N.
\endaligned
\ee
This means that $s_1 > s_0$ cannot be of finite value, otherwise, we can extend the solution to a larger hyperbolic time $\widetilde{s}_1 > s_1$, which is thanks to the improved estimates in \eqref{eq:improved}, but this will contradict to the definition of $s_1$ in \eqref{eq:s1}. Thus we conclude that $s_1 = + \infty$.

Then given any time $T \geq t_0 + 1$, we integrate equation \eqref{eq:quasi-div} over the spacetime region (see \cite{PLF-YM-book})
$$
R_0 = \{ (t, x):  t \geq T, t^2 - |x|^2 \leq T^2  \} \bigcap \{(t, x): t \geq |x| + 1 \}
$$
to get
$$
\aligned
& 
\int_{\Hcal_T^*} |\overline{\del} \Gamma^I w|^2 + 2 P^{\gamma \alpha\beta} \del_\gamma w \del_\beta \Gamma^I w \del_t \Gamma^I w n_\alpha - P^{\gamma \alpha \beta} \del_\gamma w \del_\alpha \Gamma^I w \del_\beta \Gamma^I w \, dx
\\
& - \int_{\RR^2}  \big( (\del_t \Gamma^I w)^2 + \sum_a (\del_a \Gamma^I w)^2 \big)
+ 2 P^{\gamma 0 \beta}  \big( \del_\gamma w \del_\beta \Gamma^I w \del_t \Gamma^I w \big)
-  P^{\gamma \alpha\beta}  \big( \del_\gamma w \del_\alpha \Gamma^I w \del_\beta \Gamma^I w \big) \, dx
\\
&=
 \int \int_{R_0} 2 P^{\gamma \alpha\beta} \del_\gamma \del_\alpha w \del_\beta \Gamma^I w \del_t \Gamma^I w
-  P^{\gamma \alpha\beta} \del_t \del_\gamma w \del_\alpha \Gamma^I w \del_\beta \Gamma^I w
\\
&+ 2 \Big(  - \sum_{\substack{|I_1| + |I_2| + d = |I|\\ d\geq 1}} N_d (\Gamma^{I_1} w, \Gamma^{I_2} w) - \sum_{\substack{|I_1| + |I_2|  = |I|\\ |I_2| \leq N-1}} N (\Gamma^{I_1} w, \Gamma^{I_2} w)  \Big) \del_t \Gamma^I w \, dxdt.
\endaligned
$$
We observe that
$$
\aligned
&\int_{\Hcal_T^*} \big| 2 P^{\gamma \alpha\beta} \del_\gamma w \del_\beta \Gamma^I w \del_t \Gamma^I w n_\alpha - P^{\gamma \alpha \beta} \del_\gamma w \del_\alpha \Gamma^I w \del_\beta \Gamma^I w \big| \, dx
\lesssim
C_1 \eps \int_{\Hcal_T^*} |\overline{\del} \Gamma^I w|^2  \, dx,
\\
&\int_{\RR^2} 
\big| 2 P^{\gamma 0 \beta}  \big( \del_\gamma w \del_\beta \Gamma^I w \del_t \Gamma^I w \big)
-  P^{\gamma \alpha\beta}  \big( \del_\gamma w \del_\alpha \Gamma^I w \del_\beta \Gamma^I w \big) \big| \, dx
\lesssim
C_1 \eps \int_{\RR^2}  \big| \del \Gamma^I w \big|^2  \, dx,
\endaligned
$$
since $C_1 \eps \ll 1$, and we thus have
\be 
\aligned
&\| \del \Gamma^I w (T) \|_{L^2(\RR^2)}^2
\\
\lesssim
&\| \overline{\del} \Gamma^I w \|_{L^2_f(\Hcal_T)}^2
+
\Big|\int \int_{R_0} 2 P^{\gamma \alpha\beta} \del_\gamma \del_\alpha w \del_\beta \Gamma^I w \del_t \Gamma^I w
-  P^{\gamma \alpha\beta} \del_t \del_\gamma w \del_\alpha \Gamma^I w \del_\beta \Gamma^I w
\\
+ 
& 2 \Big(  - \sum_{\substack{|I_1| + |I_2| + d = |I|\\ d\geq 1}} N_d (\Gamma^{I_1} w, \Gamma^{I_2} w) - \sum_{\substack{|I_1| + |I_2|  = |I|\\ |I_2| \leq N-1}} N (\Gamma^{I_1} w, \Gamma^{I_2} w)  \Big) \del_t \Gamma^I w \, dxdt \Big|.
\endaligned
\ee
By observing that
$
R_0 \subset \bigcup_{s_0 \leq s \leq T} \Hcal_s,
$
we have
$$
\aligned
& 
\Big|\int \int_{R_0} 2 P^{\gamma \alpha\beta} \del_\gamma \del_\alpha w \del_\beta \Gamma^I w \del_t \Gamma^I w
-  P^{\gamma \alpha\beta} \del_t \del_\gamma w \del_\alpha \Gamma^I w \del_\beta \Gamma^I w
\\
&+ 2 \Big(  - \sum_{\substack{|I_1| + |I_2| + d = |I|\\ d\geq 1}} N_d (\Gamma^{I_1} w, \Gamma^{I_2} w) - \sum_{\substack{|I_1| + |I_2|  = |I|\\ |I_2| \leq N-1}} N (\Gamma^{I_1} w, \Gamma^{I_2} w)  \Big) \del_t \Gamma^I w \, dxdt \Big|
\\
&\lesssim
\int_{s_0}^T \int_{\Hcal_{\tau}^*} (\tau/t) \Big| 2 P^{\gamma \alpha\beta} \del_\gamma \del_\alpha w \del_\beta \Gamma^I w \del_t \Gamma^I w
-  P^{\gamma \alpha\beta} \del_t \del_\gamma w \del_\alpha \Gamma^I w \del_\beta \Gamma^I w
\\
&+ 2 \Big(  - \sum_{\substack{|I_1| + |I_2| + d = |I|\\ d\geq 1}} N_d (\Gamma^{I_1} w, \Gamma^{I_2} w) - \sum_{\substack{|I_1| + |I_2|  = |I|\\ |I_2| \leq N-1}} N (\Gamma^{I_1} w, \Gamma^{I_2} w)  \Big)  \del_t \Gamma^I w \Big| \, dxd\tau
\\
&\lesssim (C_1 \eps)^3,
\endaligned
$$
and we deduce that 
$
\| \del \Gamma^I w \|_{L^2(\RR^2)} \lesssim C_1 \eps$ for all
$|I| \leq N$. 
\end{proof}

%============================================================

\section*{Acknowledgement}

The second author (PLF) is very grateful to Prof. Li Ta-Tsien for giving him the opportunity to visit and work at
the School of Mathematical Sciences, Fudan University. 

%=====================================================================================

%                            \sl is used only for BOOKS 

\end{document}